\apptocmd{\thebibliography}{\raggedright}{}{}
\theoremstyle{plain}
\newtheorem{Theorem}{Theorem}[section]
\newtheorem{Lemma}[Theorem]{Lemma}
\newtheorem{Corollary}[Theorem]{Corollary}
\newtheorem{Proposition}[Theorem]{Proposition}
\newtheorem*{theorem}{Theorem}
\newtheorem*{conjecture}{Conjecture}
\theoremstyle{definition}
\newtheorem{Def}[Theorem]{Definition}
\newtheorem{Remark}[Theorem]{Remark}
\newtheorem*{defi}{Definition}
\numberwithin{equation}{section}
\newcommand{\Z}{\ensuremath{\mathbb{Z}}}
\newcommand{\OO}{\ensuremath{\mathcal{O}}}
\newcommand{\Q}{\ensuremath{\mathbb{Q}}}
\newcommand{\R}{\ensuremath{\mathbb{R}}}
\newcommand{\C}{\ensuremath{\mathbb{C}}}
\newcommand{\HH}{\ensuremath{\mathbb{H}}}
\newcommand{\p}{\ensuremath{\mathfrak{p}}}
\newcommand{\q}{\ensuremath{\mathfrak{q}}}
\newcommand{\A}{\ensuremath{\mathbb{A}}}
\newcommand{\I}{\ensuremath{\mathbb{I}}}
\newcommand{\Ah}{\ensuremath{\mathcal{A}}}
\newcommand{\inv}{^{-1}}									%Superscripts
\newcommand{\tild}[1]{\ensuremath{\widetilde{#1}}}						%Tilde
\newcommand{\quot}[2]{{\raisebox{.2em}{$#1$}\left/\raisebox{-.2em}{$#2$}\right.}}		%Gr"oserer Quotient
\newcommand{\too}{\longrightarrow}								%Pfeile
\newcommand{\mapstoo}{\longmapsto}
\newcommand{\fii}{\ensuremath{\varphi}}								%Rest
\DeclareMathOperator{\Hom}{Hom}
\DeclareMathOperator{\Gal}{Gal}
\DeclareMathOperator{\Tr}{Tr}
\DeclareMathOperator{\invol}{inv}
\DeclareMathOperator{\Ima}{Im}
\DeclareMathOperator{\chr}{char}
\DeclareMathOperator{\GL}{GL}
\DeclareMathOperator{\PGL}{PGL}
\DeclareMathOperator{\SO}{SO}
\DeclareMathOperator{\rank}{rank}
\DeclareMathOperator{\ord}{ord}
\DeclareMathOperator{\rec}{rec}
\DeclareMathOperator{\Div}{Div}
\DeclareMathOperator{\Dist}{Dist}
\DeclareMathOperator{\St}{St}
\DeclareMathOperator{\cind}{c-ind}
\DeclareMathOperator{\sm}{sm}
\DeclareMathOperator{\sing}{sing}
\DeclareMathOperator{\fd}{fd}
\DeclareMathOperator{\hol}{hol}
\DeclareMathOperator{\tor}{tor}
\DeclareMathOperator{\MT}{MT}
\DeclareMathOperator{\cyc}{cyc}
\renewcommand{\det}{\operatorname{det}}
\renewcommand{\Dist}{\operatorname{Dist}}
\newcommand{\cf}{{\mathbbm 1}}
\newcommand{\into}{\hookrightarrow}
\newcommand{\onto}{\twoheadrightarrow}
\newcommand{\PP}{\ensuremath{\mathbb{P}^1}}					
\newcommand{\sinfty}{\ensuremath{^{S,\infty}}}
\newcommand{\G}{\ensuremath{\mathcal{G}}}
\newcommand{\n}{\ensuremath{\mathfrak{n}}}
\newcommand{\m}{\ensuremath{\mathfrak{m}}}
\DeclareMathOperator{\ev}{ev}
\DeclareMathOperator{\Ev}{Ev}
\DeclareMathOperator{\vol}{vol}
\newcommand{\dd}{\textnormal{d}}
\title[Order of vanishing of Hilbert modular Stickelberger elements]{On the order of vanishing of Stickelberger elements of Hilbert modular forms}
\subjclass[2010]{Primary 11F41; Secondary 11F67, 11G40}
\author[F. Bergunde]{Felix Bergunde}
\address{F. Bergunde \\ Fakult\"at f\"ur Mathematik \\ Universit\"at Bielefeld \\ Universit\"atsstra\ss e 25 \\ 33615 Bielefeld \\ Germany}
\email{fbergund@math.uni-bielefeld.de}
\author[L. Gehrmann]{Lennart Gehrmann}
\address{L. Gehrmann \\ Fakult\"at f\"ur Mathematik \\ Universit\"at Duisburg-Essen \\ Thea-Leymann-Stra\ss e 9 \\ 45127 Essen \\ Germany}
\email{lennart.gehrmann@uni-due.de}
\begin{document}

\begin{abstract}
We construct Stickelberger elements for Hilbert modular cusp forms of parallel weight 2 and use recent results of Dasgupta and Spieß to bound their order of vanishing from below. As a special case the vanishing part of Mazur and Tate's refined ``Birch and Swinnerton-Dyer type''-conjecture for elliptic curves of rank 0 follows. 
\end{abstract}
\maketitle

\tableofcontents

\section*{Introduction}
Let $A$ be an elliptic curve over $\Q$ of conductor $N$.
By the modularity theorem one can associate a normalized newform $f \in S_2(\Gamma_0(N))$ to $A$ such that the corresponding $L$-series coincide (cf.~\cite{Wi}, \cite{TW} and \cite{BCDT}).
Let $\lambda_A \colon \Q/\Z \to \C$ be the modular symbol for $A$ given by $$\lambda_A(q) = 2 \pi i \int_{i\infty}^q f(z)dz$$ as defined in \cite{MTT}.

The N\'eron lattice $\mathfrak{L}_A$ of $A$ is obtained by integrating a N\'eron differential $\omega_A$ against all elements in $H_1(A(\C),\Z)$.
There exists a pair of positive real numbers $\Omega_A^+,\Omega_A^- \in \R_{>0}$ uniquely determined by the following property:
If $A(\R)$ has two connected components 
\begin{align*}
 \mathfrak{L}_A &=  \Omega_A^+ \Z + i \Omega_A^- \Z&& \textnormal{(the ``rectangular case'')}
\intertext{and if $A(\R)$ has only one connected component} 
 \mathfrak{L}_A &\subseteq \Omega_A^+ \Z + i \Omega_A^- \Z && \textnormal{(the ``nonrectangular case'')}
\end{align*}
with index two.
To be exact, in the nonrectangular case elements in $\mathfrak{L}_A$ are of the form $a \Omega_A^+ + i b \Omega_A^-$ with $a \equiv b \bmod 2$. 
By a beautiful theorem of Manin and Drinfeld (cf.~\cite{Ma} and \cite{Dr}) there exists a proper subring $\mathcal{R}\subset \Q$ such that $\lambda_A(q) = [q]_A^+ \Omega_A^+ + i [q]_A^- \Omega_A^-$ with functions $[\cdot]_A^\pm \colon \Q/\Z \to \mathcal{R}$.
For example, if $A$ is a strong Weil curve, we can  take $\mathcal{R}=\Z[\frac{1}{\tau c_{A}}]$, where $\tau$ is the order of the finite group $A(\Q)_\textnormal{tors}$ and $c_{A}$ is the Manin constant of $A$.
The Manin constant $c_{A}$ is an integer, which is conjectured to be $1$.
This is known in many cases (cf.~\cite{Ed}).
In special cases one has even better bounds for the denominators occuring in the ring $\mathcal{R}$ (cf.~\cite{Wu}).
The functions $[\cdot]_A^\pm$ are the so called ``$+$'' resp.~``$-$'' modular symbols.
In the following we just treat the ``$+$'' modular symbol, so we write $[\cdot]_A$ for $[\cdot]_A^+$.

Following Mazur and Tate's approach in \cite{MT} we define Stickelberger elements in this situation.
Fixing an integer $M\geq 1$ we write $\mu_{M}$ for the group of $M$-th roots of unity and $L=\Q(\mu_M)^+$ for the maximal totally real subextension of $\Q(\mu_M)$ and $G_M = \Gal(L/\Q)$.
We have an isomorphism $$(\Z/M\Z)^\ast/\{ \pm 1 \}  \stackrel{\cong}{\longrightarrow} G_M ,$$ where the image of $a \in (\Z/M\Z)^\ast$ is denoted by $\sigma_a$.
The Stickelberger element of modulus $M$ associated to $A$ is defined as
\begin{align*}
 \Theta^{\MT}_{A,M}= \frac{1}{2} \sum_{a\in (\Z/M\Z)^{\ast}} \left[\frac{a}{M}\right]_A \sigma_a \in \frac{1}{2}\mathcal{R}[G_M],
\end{align*}
where for an arbitrary (commutative and unital) ring $R$ and an arbitrary group $H$ the group algebra of $H$ over $R$ is denoted by $R[H]$.
Since $[q]_A=[-q]_A$ for all $q\in \Q/\Z$ (cf.~\cite{MTT}) we have
\begin{align*}
 \Theta^{\MT}_{A,M} \in \mathcal{R}[G_M]
\end{align*}
as long as $M\geq 3$, which we will assume from now on.

Next, we state the vanishing conjecture of Mazur and Tate. 
For general $R$ and $H$ as above, let $I_{R}(H)\subseteq R[H]$ be the kernel of the augmentation map $R[H]\to R$, $h\mapsto 1$.
%For an element $\xi \in I_{R}(H)^r$, we write $\ord_R(\xi) \geq r$ and say that $\xi$ vanishes to order $\geq r$ at $s=1$.
\begin{defi}
 The order of vanishing $\ord_R(\xi)$ of an element $\xi \in R[H]$ is defined as
\begin{align*}
 \ord_R(\xi) = \begin{cases}
                r 	& \text{ if } \xi \in I_{R}(H)^r \setminus I_{R}(H)^{r+1}, \\
                \infty 	& \text{ if } \xi \in I_{R}^{r}(H)\ \forall r\geq 1.
               \end{cases}              
\end{align*}
\end{defi}
\noindent
Let $S_M$ be the set of prime factors $p$ of $M$ such that $A$ has split multiplicative reduction at $p$.
We define
\begin{align*}
 r_M = \rank A(\Q) + \left|S_M\right|.
\end{align*}

\begin{conjecture}[Mazur-Tate] \label{vanishing}
Let $\mathcal{R}\subset \Q$ be a subring, which contains $[q]_A$ for all $q\in \Q$.
Then the inequality $$\ord_\mathcal{R}(\Theta_{A,M}^{\MT}) \geq r_M$$ holds, i.e.~$\Theta_{A,M}^{\MT} \in I_\mathcal{R}\left(G_M\right)^{r_M}$.
\end{conjecture}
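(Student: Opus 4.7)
The plan is to lift the Stickelberger element $\Theta^{\MT}_{A,M}$ to a richer cohomology-theoretic object and then derive the containment $\Theta^{\MT}_{A,M}\in I_\mathcal{R}(G_M)^{r_M}$ from two independent sources of ``derivations'': the Tate parameters at the primes of split multiplicative reduction (contributing $|S_M|$ factors) and the Kummer images of rational points on $A$ (contributing $\rank A(\Q)$ factors).

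For the exceptional-zero contribution, following Dasgupta and Spie\ss, I would construct a ``big'' Stickelberger distribution $\widetilde{\Theta}_{A,M}$ in a suitable distribution module over $G_M$ whose projection to $\mathcal{R}[G_M]$ recovers $\Theta^{\MT}_{A,M}$. For each $p\in S_M$ the Tate uniformization $\Q_p^\times/q_p^\Z \xrightarrow{\sim} A(\Q_p)$ together with the local reciprocity map produces a derivation $\partial_p\colon \mathcal{R}[G_M]\to I_\mathcal{R}(G_M)/I_\mathcal{R}(G_M)^2$ on the coefficient module, and the central technical input — the formula proved in the recent work of Dasgupta and Spie\ss\ alluded to in the abstract — says that $\partial_p(\widetilde{\Theta}_{A,M})$ vanishes modulo higher-order terms. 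Iterating over all $p\in S_M$ and projecting to $\mathcal{R}[G_M]$ yields $\Theta^{\MT}_{A,M}\in I_\mathcal{R}(G_M)^{|S_M|}$, which already settles the conjecture when $\rank A(\Q)=0$.

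To incorporate the rank contribution, let $P_1,\dots,P_r$ with $r=\rank A(\Q)$ be a $\Q$-basis of $A(\Q)\otimes\Q$. The Kummer map $A(\Q)\to H^1(\Q,T_\ell A)$, combined with restriction to inertia at primes ramified in $L/\Q$, should attach to each $P_i$ a further derivation $\delta_i\colon \mathcal{R}[G_M]\to I_\mathcal{R}(G_M)/I_\mathcal{R}(G_M)^2$ annihilating $\widetilde{\Theta}_{A,M}$. The conceptual prototype is an equivariant Birch-type formula identifying the image of $\Theta^{\MT}_{A,M}$ in $I_\mathcal{R}(G_M)^{r_M}/I_\mathcal{R}(G_M)^{r_M+1}$ with the determinant of a regulator-style pairing between the $P_i$ and the Tate periods $q_p$. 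Given the two families $\{\partial_p\}_{p\in S_M}$ and $\{\delta_i\}_{1\le i\le r}$, together with a linear-independence statement for their images in $I_\mathcal{R}(G_M)/I_\mathcal{R}(G_M)^2$, a standard computation on the graded pieces of the augmentation filtration produces the required containment $\Theta^{\MT}_{A,M}\in I_\mathcal{R}(G_M)^{r_M}$.

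The decisive obstacle is the construction of the rank-theoretic derivations $\delta_i$: they demand an Euler system whose norm-compatibility along the cyclotomic tower $L/\Q$ encodes the derivatives of $\Theta^{\MT}_{A,M}$, and no such system is presently known for a general modular elliptic curve. Kolyvagin's Heegner-point system supplies the analogous input in the anticyclotomic direction but does not translate to the cyclotomic setting that governs $G_M$, while the Beilinson--Kato system controls $A(\Q)$ only through $p$-adic $L$-values and not through the Mazur--Tate derivatives at the required level of refinement. Consequently I expect the plan to deliver unconditionally only the $|S_M|$-part — matching what the paper achieves via the rank-zero reduction — and the full conjecture as stated to await a new global construction of algebraic classes attached to rational points of $A$ over the abelian extensions $L$ of $\Q$.
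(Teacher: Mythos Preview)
The statement you are addressing is labelled in the paper as a \emph{conjecture}, not a theorem: the paper does not prove it and makes no claim to. What the paper actually establishes is the weaker bound $\ord_{\mathcal{R}}(\Theta^{\MT}_{A,M})\geq |S_M|$ (its main theorem in the introduction, obtained as a special case of Theorem~\ref{MainTheorem}), and hence the conjecture only under the additional hypothesis $\rank A(\Q)=0$. Your proposal is therefore not a proof of the stated conjecture, and you correctly diagnose this yourself in your final paragraph: the rank contribution is genuinely out of reach with present technology, and the derivations $\delta_i$ you would need do not exist in the literature.

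Regarding the $|S_M|$-part that both you and the paper obtain, your sketch and the paper's argument differ in mechanism. You frame the exceptional-zero contribution via Tate parameters $q_p$ and associated derivations $\partial_p$ acting on a lifted distribution. The paper never invokes Tate uniformization or derivations explicitly. Instead it (a) shows that, for each prime $\mathfrak{p}$ at which the automorphic representation is Steinberg, the modular symbol lifts to an ``$S$-special'' modular symbol valued in $\Hom(\St_{\mathfrak{p}},\,\cdot\,)$ (Lemma~\ref{torsion}, using that $\pi_{\mathfrak{p}}\cong\St_{\mathfrak{p}}$), and (b) applies a vanishing criterion for the homology class attached to the reciprocity map (Proposition~\ref{vanish1}, taken from Dasgupta--Spie\ss), which forces the cap-product defining $\Theta_{\mathfrak{m}}$ to land in the product of the local augmentation-type ideals $I_{\mathfrak{p}}$ (Proposition~\ref{vanish2}). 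The Steinberg lift plays the structural role that your ``big'' distribution $\widetilde{\Theta}_{A,M}$ plays, but the actual vanishing comes from a homological argument on the idele-class side rather than from iterating local derivations. Both routes ultimately rest on the same Dasgupta--Spie\ss\ input, so the difference is more one of packaging than of substance; the paper's formulation has the advantage of working uniformly over any totally real base field and of making the integrality statement transparent.
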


Our main objective is to proof the following

\begin{theorem}
There exists a free $\Z$-module $\mathcal{L}\subset \C$ of rank $1$ such that $[q]_A\in \mathcal{L}$ for all $q\in \Q$.
Given any such module $\mathcal{L}\subset\C$ we have $$\Theta_{A,M}^{\MT}\in I_\Z\left(G_M\right)^{|S_{m}|} \otimes \mathcal{L}.$$
In particular, if $\mathcal{R}\subset \Q$ is a subring which contains $[q]_A$ for all $q\in \Q$, we have
$$\ord_\mathcal{R}(\Theta_{A,M}^{\MT}) \geq \left|S_M\right|.$$
\end{theorem}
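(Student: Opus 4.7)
The plan is in two parts. First, the existence of $\mathcal{L}$ follows from the Manin--Drinfeld theorem recalled in the introduction: the image of $[\cdot]_A$ is contained in $\Omega_A^+ \mathcal{R}$ for a subring $\mathcal{R}\subset\Q$, and after fixing a generator (or enlarging slightly if necessary) this gives a free $\Z$-module of rank one in $\C$. With $\mathcal{L}$ so chosen, the main assertion reduces to the purely integral statement that the normalized Stickelberger element lies in $I_\Z(G_M)^{|S_M|}$.

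For the main inequality, the idea is to realize $\Theta_{A,M}^{\MT}$ as the evaluation of an $S_M$-adic cohomology class attached to the newform $f$ on a tautological element, and to show that this class is itself a product (over $p\in S_M$) of local cocycles landing in the augmentation ideal. For each prime $p\in S_M$, split multiplicative reduction means the local component of the automorphic representation associated to $f$ is an unramified twist of the Steinberg representation of $\GL_2(\Q_p)$; this supplies a canonical harmonic cocycle on the Bruhat--Tits tree of $\PGL_2(\Q_p)$ whose values, after translating via class field theory to $G_M$, land in $I_\Z(G_M)$ (essentially the cocycle encoding Tate uniformization of $A$ at $p$). Assembling these local cocycles by the Dasgupta--Spie\ss{} machinery referenced in the abstract yields an $S_M$-adic avatar of the modular symbol whose evaluation reproduces $\Theta_{A,M}^{\MT}$ and, by multiplicativity of the construction over $p\in S_M$, automatically lands in $I_\Z(G_M)^{|S_M|}\otimes\mathcal{L}$.

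Operationally the proof therefore breaks into three steps: (i) define the $S_M$-adic cohomology class by combining the global class of $f$ with Steinberg cocycles at every $p\in S_M$; (ii) show that pairing it with the relevant class-field-theoretic element gives back $\Theta_{A,M}^{\MT}$, up to Euler factors that are units; (iii) verify that each local Steinberg contribution lies in $I_\Z(G_M)$, at which point the $|S_M|$-fold product of local cocycles lands in $I_\Z(G_M)^{|S_M|}$.

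The hardest step is the comparison in (ii): matching the classical integral definition of the modular symbol $[a/M]_A$ with the $S_M$-adic automorphic construction. This requires a careful bookkeeping of coset representatives and local uniformizers at each $p\in S_M$ and checking that no spurious Euler factors appear. In the Hilbert modular setting that is the paper's actual subject, the same comparison has to be carried out uniformly over the archimedean places as well, which is where most of the technical work is expected to live.
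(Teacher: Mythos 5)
Your outline gets several ingredients right — the role of the Steinberg representation at the primes of split multiplicative reduction, the use of the Dasgupta--Spie\ss\ machinery, and the need to compare the automorphic Stickelberger element with the classical Mazur--Tate one. But the key mechanism for the vanishing is not what you describe, and two of your three steps rest on claims that don't match how the proof actually works.

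The central point is Proposition~\ref{vanish2}, which reduces the bound to Proposition~\ref{vanish1} (Dasgupta--Spie\ss, Proposition~3.8 of \cite{DS}). One does \emph{not} factor $\Theta_{A,M}^{\MT}$ into a product of $|S_M|$ local pieces each individually in $I_\Z(G_M)$; there is no such factorization. Instead one exhibits $\Theta_{A,M}^{\MT}$ as a cap product $\rho_{L/F}(S)\cap \Delta_{\m,S}(\kappa')$ of an $S_M$-special lift $\kappa'$ of the modular symbol with a homology class built from the Artin reciprocity map, and then passes to the quotient ring $\Z[G_M]/\prod_{\p\in S_\m}I_\p$. In that quotient the pushed-forward homology class $c_{\pi\circ\rec}$ vanishes by Proposition~\ref{vanish1}, because for each $\p\in S_\m$ the local component $\chi_\p=\pi\circ\rec_\p$ takes values in the image of the decomposition group, hence is $\equiv 1 \bmod \pi(I_\p)$, and the product of these ideals is zero in the quotient by construction. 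This is a class-field-theoretic observation about the Artin map landing in decomposition groups, not a statement that a ``Tate-uniformization harmonic cocycle lands in $I_\Z(G_M)$''; the Steinberg representation enters only via the extra $\St_\p$-tensor-factors in the definition of $S$-special modular symbols (equation~\eqref{Ah}), which is what allows the extension-by-zero map~\eqref{extension} to feed into the Dasgupta--Spie\ss\ vanishing criterion.

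Two further points. First, the existence of the $S_M$-special lift $\kappa'$ is itself a nontrivial step (Lemma~\ref{torsion}); for $F=\Q$ it is essentially automatic because the relevant cohomology sits in degree~$0$ and is torsion free (Remark~\ref{allmostfinal}), but you should say so explicitly since over general $F$ this is exactly where the torsion ideals $\mathfrak{a}_{S'}^{\tor}$ enter. Second, the comparison you flag in your step~(ii) is handled not by a direct bookkeeping of coset representatives, but by first establishing the interpolation property at primitive characters (Proposition~\ref{specialvalues}) together with the Hecke compatibility relations (Proposition~\ref{compatibility}); these properties uniquely determine the whole family of Stickelberger elements (Remark~\ref{padic}(i)), so the comparison reduces to checking the leading coefficient, which works out to $c=2$ (and the extra factor $2^{-d}$ in Theorem~\ref{MainTheorem} cancels with the $1/2$ built into the definition of $\Theta_{A,M}^{\MT}$ when $d=1$). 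As written, your proposal leaves the critical vanishing step as an unsupported ``multiplicativity'' heuristic and misattributes the source of the augmentation-ideal membership.
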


Thus the conjecture of Mazur and Tate is true if $\rank A(\Q) = 0$.
In fact, we prove a more general statement for Stickelberger elements coming from Hilbert modular cusp forms of parallel weight (2,\ldots,2). 

The structure of the paper is the following.
We fix once and for all a totally real number field $F$.
In Chapter 1 we give a description of modular symbols of trivial weight in terms of the group cohomology of $\PGL_{2}(F)$ with values in certain adelic function spaces.
For every finite abelian extension $L/F$ and every ideal $\mathfrak{m}$, which bounds the ramification of $L/F$, Stickelberger elements are constructed by first pulling back the modular symbol along the diagonal torus and then taking cap products with homology classes which are essentially given by the Artin reciprocity map.
We use results of Dasgupta and Spieß (cf.~\cite{DS}) to give a general criterion for obtaining lower bounds on the order of vanishing of these Stickelberger elements (see Proposition \ref{vanish2}).
Proposition \ref{compatibility} gives relations between Stickelberger elements for different moduli and field extensions, which generalize the corresponding statements for the Stickelberger elements of Mazur and Tate.
A functional equation for Stickelberger elements is given (see Proposition \ref{funceq}) and used to determine the parity of their order of vanishing in terms of local root numbers.
An advantage of our adelic approach is that all computations turn out to be completely local.
These computations are carried out in Section \ref{LocalHecke}.

In the second chapter we use the Eichler-Shimura homomorphism (or rather a version of it in our setting) to construct modular symbols for Hilbert modular cusp forms of parallel weight $(2,\ldots,2)$.
Thus, we can use the machinery developed in Chapter 1 to attach Stickelberger elements to Hilbert modular forms.
The fact that modular symbols commute with flat base change implies that these Stickelberger elements take values in integral group rings (see Lemma \ref{intStick}).
In Proposition \ref{specialvalues} we evaluate them at primitive characters and relate them to special values of $L$-functions.
Finally, we give lower bounds on their order of vanishing in terms of the number of all primes $\q$ such that the local component at $\q$ of the associated automorphic representation is Steinberg (see Theorem \ref{MainTheorem}).
This is an integral refinement of one of the main theorems of Spie\ss' article \cite{Sp} on the order of vanishing of $p$-adic $L$-functions of modular elliptic curves at $s=0$. More precisely, Spie\ss~ shows in \textit{loc.~cit.~}that the order of vanishing is at least the number of primes above $p$ at which the automorphic representation is Steinberg.

It should be mentioned that all arguments carry over (with slight modifications) to cohomological cuspidal automorphic representations of $\PGL_2$ of trivial cohomological weight over arbitrary number fields.
For ease of exposition we decided to stick to the totally real case.

\bigskip
\textbf{Acknowledgments:} We thank Michael Spie\ss~for suggesting to work on the Mazur-Tate conjecture and Thomas Poguntke for useful comments on an earlier draft.
The first named author acknowledges financial support provided by the DFG within the CRC 701 'Spectral Structures and Topological Methods in Mathematics'.

\bigskip
\textbf{Notions and Notations}. We will use the following notions and notations throughout the whole article.

All rings are commutative and unital.
The group of invertible elements of a ring $R$ will be denoted by $R^{\ast}$.
If $R$ is a ring and $H$ a group, we will denote the group algebra of $H$ over $R$ by $R[H]$.
We let $I_{R}(H)\subseteq R[H]$ be the kernel of the augmentation map $R[H]\to R$, $h\mapsto 1$.
If $\chi:H\to R^{\ast}$ is a group homomorphism, we let $R(\chi)$ be the representation of $H$ whose underlying $R$-module is $R$ itself and on which $H$ acts via the character $\chi$.
If $M$ is another $R[H]$-module, we put $M(\chi)=M\otimes_{R}R(\chi)$.

For a set $X$ and a subset $A\subseteq X$ the characteristic function $\cf_{A}\colon X\to \left\{0,1\right\}$ is defined by
\begin{align*}
 \cf_{A}(x) = \begin{cases}
		1 & \mbox{if } x\in A,\\
		0 & \mbox{else}.
	      \end{cases}
\end{align*}

Throughout the paper $F$ denotes a totally real number field of degree $d$ with ring of integers $\mathcal{O}=\mathcal{O}_{F}$.
Let $E=\mathcal{O}^{\ast}$ denote the group of global units.
For a nonzero ideal $\mathfrak{a} \subseteq \mathcal{O}$ we set $N(\mathfrak{a})=\sharp (\mathcal{O}/\mathfrak{a})$.

If $v$ is a place of $F$, we denote by $F_{v}$ the completion of $F$ at $v$.
If $\mathfrak{p}$ is a finite place, we let $\mathcal{O}_{\mathfrak{p}}$ denote the valuation ring of $F_{\mathfrak{p}}$ and $\ord_{\mathfrak{p}}$ the additive valuation such that $\ord_{\mathfrak{p}}(\varpi)=1$ for any local uniformizer $\varpi\in\mathcal{O}_{\mathfrak{p}}$.
For an arbitrary place $v$ let $|\cdot|_{v}$ be the normalized multiplicative norm, i.e.~$|x|_{\mathfrak{p}}=N(\mathfrak{p})^{-\ord_{p}(x)}$ if $\mathfrak{p}$ is a finite place and $|x|_{v}=|\sigma_{v}(x)|$ if $\sigma_{v}$ is the embedding $F\into\R$ corresponding to the Archimedean place $v$.
We denote by $U_{v}$ the invertible elements of $\mathcal{O}_{v}$ if $v$ is a finite place and the group of positive elements of $F_{v}$ if $v$ is a real place.
For a finite place $\mathfrak{p}$ we put $U^{(m)}_{\mathfrak{p}}=\left\{x\in U_{\mathfrak{p}}\mid x\equiv 1 \bmod \mathfrak{p}^{m}\right\}$. Further we denote by $F^{\ast}_{+}$ (resp.~$E_{+}$) the totally positive elements in $F$ (resp.~$E$).

Let $\A$ be the ring of adeles of $F$ and $\I$ the idele group of $F$.
We denote by $|\cdot|\colon\I\to\R^{\ast}$ the absolute modulus, i.e.~$|(x_{v})_{v}|=\prod_{v}{|x_{v}|_{v}}$ for $(x_{v})_{v}\in\I$.
For a finite set $S$ of places of $F$ we define the "$S$-truncated adeles" $\A^{S}$ (resp.~"$S$-truncated ideles" $\I^{S}$) as the restricted product of the completions $F_{v}$ (resp.~$F_{v}^{\ast}$) over all places $v$ which are not in $S$.
We put $F_{S}=\prod_{v\in S}{F_{v}}$, $U_{S}=\prod_{v\in S}U_{v}$ and $U^{S}=\prod_{v\notin S}U_{v}$.
The set of Archimedean places of $F$ will be denoted by $S_{\infty}$.
We often write $\A^{S,\infty}$ instead of $\A^{S\cup S_{\infty}}$ and similarly we define $\I^{S,\infty}$, $U^{S,\infty}$, $F_{S,\infty}$ etc.
We always drop the superscript $\emptyset$ if $S=\emptyset$.
Moreover, if $\widetilde{U}\subseteq U$ is a subgroup, we will write $\widetilde{U}^{S}$ for the image of $\widetilde{U}$ under the projection $U\to U^{S}$.
For every non-zero ideal $\mathfrak{m}=\prod_{\mathfrak{p}}\mathfrak{p}^{m_{\mathfrak{p}}}\subseteq\mathcal{O}$ we put
\begin{align*}
 U(\mathfrak{m})=\prod_{\mathfrak{p}\notin S_\infty}U_{\mathfrak{p}}^{(m_{\mathfrak{p}})}\times \prod_{v\in S_\infty}U_{v}.
\end{align*}

We will write $G$ for the algebraic group $\PGL_{2}$, $B$ for the Borel subgroup of upper triangular matrices and $T$ for the torus of diagonal matrices in $G$. The embedding
\begin{align*}
 \iota\colon \mathbb{G}_{m}\too G,\ x\mapstoo \begin{pmatrix}x&0\\0&1\end{pmatrix}
\end{align*}
induces an isomorphism of algebraic groups $\mathbb{G}_{m}\cong T$.
If $K\subset G(\A^{\infty})$ is a compact open subgroup and $S$ a finite set of finite places of $F$, we write $K^{S}$ for the image of $K$ under the projection $G(\A^{\infty})\to G(\A\sinfty)$.
We let $G(F_{\infty})^+\subset G(F_{\infty})$ be the subgroup of elements with totally positive determinant and similarly define $G(\A)^+\subset G(\A)$.
Given a subgroup $H\subset G(\A)$ we write $H^{+}$ for the intersection of $H$ with  $G(\A)^+$.

\section{Modular symbols}
\subsection{Generalities on functions and distributions}
Given topological spaces $X,Y$ we will write $C(X,Y)$ for the space of continuous functions from $X$ to $Y$.
If $R$ is a topological ring, we define $C_{c}(X,R) \subseteq C(X,R)$ as the subspace of continuous functions with compact support.
If we consider $Y$ (resp.~$R$) with the discrete topology, we will often write $C^{0}(X,Y)$ (resp.~$C_{c}^{0}(X,R)$) instead.

Since a locally constant map with compact support takes only finitely many different values, the canonical map
\begin{align*}
 C^{0}_{c}(X,\Z)\otimes R\too C^{0}_{c}(X,R)
\end{align*}
is an isomorphism of $R$-modules.
Given two topological spaces $X$ and $Y$, the pairing
\begin{align*}
 C^{0}_{c}(X,R)\times C^{0}_{c}(Y,R)\too C^{0}_{c}(X\times Y,R),\ (f,g)\mapstoo f\cdot g
\end{align*}
induces an isomorphism
\begin{align}\label{product}
 C^{0}_{c}(X,R)\otimes_{R} C^{0}_{c}(Y,R)\cong C^{0}_{c}(X\times Y,R).
\end{align}
For a ring $R$ and an $R$-module $N$, we define the $R$-module of $N$-valued distributions on $X$ as $\Dist(X,N)=\Hom_{\Z}(C^{0}_{c}(X,\Z),N)$.
Using (\ref{product}) we get isomorphisms
\begin{align}\label{product2}
 \Dist(X \times Y,N) \cong \Dist(X,\Dist(Y,N))\cong \Dist (Y,\Dist(X,N)).
\end{align}
If $X$ is discrete, we have the following pairing
\begin{align*}
 C_{c}^{0}(X,\Z)\times C^{0}(X,N)\too N,\ (\psi,\phi)\mapstoo \sum_{x\in X} (\psi\cdot\phi)(x),
\end{align*}
which induces an isomorphism of $R$-modules
\begin{align}\label{discrete}
 C^{0}(X,N)\too \Dist(X,N).
\end{align}
If $f \colon X \to Y$ is an open embedding,
extension by zero
\begin{align*}
 f_! \colon C_{c}^{0}(X,R)\too C_{c}^{0}(Y,R)
\end{align*}
induces a map on distribution spaces
\begin{align}\label{opensub}
 f^! \colon \Dist(Y,N)\too\Dist(X,N).
\end{align}
Let $f\colon X \to Y$ be a map between discrete spaces.
The homomorphism
\begin{align*}
 f_{\ast}\colon C_{c}(X,R)\too C_{c}(Y,R),\ \Phi\mapstoo \left[y\mapsto \sum_{f(x)=y}\Phi(x)\right]
\end{align*}
induces an $R$-linear map of distribution spaces
\begin{align}\label{discrete2}
 f^{\ast}\colon\Dist(Y,N)\too \Dist(X,N),
\end{align}
which corresponds to the pullback map on $N$-valued functions under the isomorphism (\ref{discrete}).

On the other hand, if $g\colon X\to Y$ is a proper, continuous map, the pullback
\begin{align*}
 g^{\ast}\colon C_{c}^{0}(Y,R)\too C_{c}^{0}(X,R),\ \phi \mapstoo \phi \circ g
\end{align*}
along $g$ always yields an $R$-linear map
\begin{align}\label{properpush}
 g_{\ast}\colon \Dist(X,N)\too \Dist(Y,N).
\end{align}
If $g\colon X\to Y$ is a proper map between discrete spaces, this corresponds to the map
\begin{align*}
 g_{\ast}\colon C(X,N)\too C(Y,N),\ \Phi\mapstoo \left[y\mapsto \sum_{f(x)=y}\Phi(x)\right]
\end{align*}
under the isomorphism (\ref{discrete}).

Now, let $H$ be a topological group and $K$ a closed subgroup.
Then $H$ acts on $C^{0}(H/K,N)$ (resp.~$C^{0}_{c}(H/K,R)$) via left-multiplication, i.e.~$(h.f)(x)=f(h^{-1}x)$.
Thus, we also get an $H$-action on $\Dist(H/K,N)$ via $(h.D)(f)=D(h^{-1}f)$.
Suppose $K\subseteq H$ is open.
Then $H/K$ is discrete and it follows immediately that the isomorphism (\ref{discrete}) is $G$-equivariant.

\subsection{Hecke operators and compatibility} \label{LocalHecke}
This section contains all local computations, which are needed for the construction of Stickelberger elements of modular symbols in later sections.
In particular, Lemma \ref{LocalHeckeLemma} is the key lemma for proving the compatibility of Stickelberger elements of different moduli and Lemma \ref{LocalDiagram} enables us to apply the formalism of Dasgupta and Spie\ss~to our situation.

Let $\p$ be a fixed finite place of $F,\ q=N(\p)$ and $\varpi_\p$ a local uniformizer at $\p$.
We choose representatives $a_i \in \OO_\p$, $i=0,\dots,q-1$, of $\OO_\p/\p$ and define
\begin{align*}
 \gamma_i=\begin{pmatrix} \varpi_\p & a_i \\ 0 & 1 \end{pmatrix} \mbox{for}\ i=0,\dots,q-1,\ \mbox{ and }
 \gamma_q=\begin{pmatrix} 1 & 0 \\ 0 & \varpi_\p \end{pmatrix}.
\end{align*}
For simplicity, we will always assume that $a_0=0$.
For an integer $n \geq 0$ we define
\begin{align*}
 K_\p(\p^n) = \left\{ \begin{pmatrix} a & b \\ c & d \end{pmatrix} \in G(\OO_\p) \mid c \equiv 0 \bmod \p^{n} \right\}
\end{align*}
and consider the well known double-coset-decomposition
\begin{align*}
 K_\p(\p^n) \begin{pmatrix} \varpi_\p & 0 \\ 0 & 1 \end{pmatrix} K_\p(\p^n) = \begin{cases}
										  \bigcup_{i=0}^{q-1} \gamma_i K_\p(\p^n) \cup \gamma_q K_\p(\p^n)	& \mbox{if $n=0$,} \\
										  \bigcup_{i=0}^{q-1} \gamma_i K_\p(\p^n)				& \mbox{else.}
									      \end{cases}
\end{align*}
Let us fix a ring $R$ and an $R$-module $N$.
Given $g \in G(F_\p)$ and a compact open subgroup $K_\p \subseteq G(F_\p)$ we let $$\rho(g) \colon C(G(F_\p)/K_\p,N) \too C(G(F_\p)/ (g K_\p g\inv),N)$$ be the map induced by right-multiplication, i.e.~$\rho(g)(\phi)(\ast) = \phi(\ast \cdot g)$.
Since right- and left-multiplication commute, we see that $\rho(g)$ is $G(F_\p)$-equivariant.

\begin{Def} 
Let $n \geq 0$ be an integer.
The $\p$-Hecke operator $$T_\p \colon C(G(F_\p)/K_\p(\p^n),N) \to C(G(F_\p)/K_\p(\p^n),N)$$ is given by
\begin{align*}
 T_\p = \sum_{i=0}^{q-1} \rho(\gamma_i) + \cf_\p(\p^n) \rho(\gamma_q) 	\mbox{, where } \cf_\p(\p^n)=\begin{cases}
															1 & \mbox{if $n=0$,}\\
															0 & \mbox{else.}
														     \end{cases}
\end{align*}
The Atkin-Lehner involution at $\p$ is defined as
\begin{align*}
 W_{\p^{n}}=\rho\left(\begin{pmatrix} 0 & 1 \\ \varpi_{\p}^{n} & 0 \end{pmatrix} \right)\colon C(G(F_\p)/K_\p(\p^n),N) \to C(G(F_\p)/K_\p(\p^n),N).
\end{align*}
\end{Def}

Note that the square of the matrix defining $W_{\p^n}$ is trivial in $G(F_\p)$ and hence, $W_{\p^n}$ defines an involution.

For integers $n,r \geq 0$ let
\begin{align*}
 \partial_r \colon C(G(F_\p)/K_\p(\p^n),N) \too \Dist(F_\p^\ast/U_\p^{(r)},N)
\end{align*}
be the map given by
\begin{align*}
 \partial_r(\phi)\left(\cf_{xU_\p^{(r)}}\right) = \phi\left(\begin{pmatrix} x & 0 \\ 0 & 1 \end{pmatrix} \begin{pmatrix} \varpi_\p^r & 1 \\ 0 & 1 \end{pmatrix} \right).
\end{align*}
Keep in mind that the set of functions $\cf_{xU_\p^{(r)}}$, where $x$ ranges over representatives of $F_\p^\ast$ modulo $U_\p^{(r)}$, is a $\Z$-basis of $C_c^0(F_\p^\ast/U_\p^{(r)},\Z)$.
The map $\partial_r$ is well-defined for all $r \geq 0$ since
\begin{align*}
 \begin{pmatrix} u & 0 \\ 0 & 1 \end{pmatrix} \begin{pmatrix} \varpi_\p^r & 1 \\ 0 & 1 \end{pmatrix} = \begin{pmatrix} \varpi_\p^r & 1 \\ 0 & 1 \end{pmatrix} \begin{pmatrix} u & \frac{u-1}{\varpi_\p^r} \\ 0 & 1 \end{pmatrix}
\end{align*}
and $\begin{pmatrix} u & \frac{u-1}{\varpi_\p^r} \\ 0 & 1 \end{pmatrix} \in K_\p(\p^n)$ for $u \in U_\p^{(r)}$.
The embedding $\iota\colon F_\p^{\ast}\to G(F_{p})$ induces an $F_{\p}^{\ast}$-action on $C(G(F_\p)/K_\p(\p^n),N)$.
The map $\partial_{r}$ is obviously $F_{\p}^{\ast}$-equivariant.
Note that $\begin{pmatrix} 1 & 1 \\ 0 & 1 \end{pmatrix} \in K_\p(\p^n)$ for all $n$.
Hence, we get $$\partial_0(\phi)\left(\cf_{xU_\p}\right) = \phi\left(\begin{pmatrix} x & 0 \\ 0 & 1 \end{pmatrix} \right).$$

By \eqref{discrete2} and \eqref{properpush}, the projection $\pi_r \colon F_\p^\ast/U_\p^{(r+1)} \to F_\p^\ast/U_\p^{(r)}$ yields maps
\begin{align*}
 (\pi_r)^\ast \colon \Dist(F_\p^\ast/U_\p^{(r)},N) \to \Dist(F_\p^\ast/U_\p^{(r+1)},N)
\intertext{and}
 (\pi_r)_\ast \colon \Dist(F_\p^\ast/U_\p^{(r+1)},N) \to \Dist(F_\p^\ast/U_\p^{(r)},N).
\end{align*}

\begin{Lemma} \label{LocalHeckeLemma}
Let $n\geq 0$ be an integer.
Then
\begin{align*}
 \partial_0(T_\p \phi) &= \varpi_\p\inv. \partial_0(\phi) + (\pi_0)_\ast(\partial_1(\phi)) + \cf_\p(\p^n) \varpi_\p. \partial_0(\phi)
\intertext{and if $r>0$, the equality}
 \partial_r(T_\p \phi) &= (\pi_{r})_\ast(\partial_{r+1}(\phi))  + \cf_\p(\p^n) (\pi_{r-1})^\ast(\partial_{r-1}(\phi))
\end{align*}
holds.
\end{Lemma}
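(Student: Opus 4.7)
My plan is to verify both identities by a direct matrix calculation in $\PGL_{2}(F_\p)$, testing them against the generators $\cf_{xU_\p^{(r)}}$ of $C_{c}^{0}(F_\p^{\ast}/U_\p^{(r)},\Z)$. Unwinding the definitions of $T_\p$ and $\partial_r$, the left-hand side evaluates as
$$\partial_r(T_\p\phi)(\cf_{xU_\p^{(r)}})
   = \sum_{i=0}^{q-1}\phi\bigl(\iota(x)M_r\gamma_i\bigr)
   + \cf_\p(\p^n)\,\phi\bigl(\iota(x)M_r\gamma_q\bigr),$$
where $M_r:=\begin{pmatrix}\varpi_\p^r & 1\\ 0 & 1\end{pmatrix}$. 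For each term I will find a decomposition $\iota(x)M_r\gamma_\ast = \iota(y)\,M_{r'}\,k$ with $k\in K_\p(\p^n)$, so that the term equals $\partial_{r'}(\phi)(\cf_{yU_\p^{(r')}})$; summing and comparing to the explicit descriptions of $(\pi_r)_\ast$, $(\pi_{r-1})^\ast$ and the $F_\p^\ast$-action on distributions will produce the right-hand side.

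For the Hecke summands with $r\geq 1$, a one-line matrix computation yields
$$M_r\gamma_i = \iota(b_i)\,M_{r+1}\,\iota(b_i)\inv,\qquad b_i:=1+\varpi_\p^r a_i\in U_\p^{(r)}.$$
Since $b_i\in U_\p$, $\iota(b_i)\inv\in K_\p(\p^n)$, so $\phi(\iota(x)M_r\gamma_i) = \partial_{r+1}(\phi)(\cf_{xb_iU_\p^{(r+1)}})$. The isomorphism $U_\p^{(r)}/U_\p^{(r+1)}\cong \OO_\p/\p$ sending $1+\varpi_\p^r a\mapsto a$ shows that $\{b_i\}_{i=0}^{q-1}$ is a full set of representatives for $U_\p^{(r)}/U_\p^{(r+1)}$; summing then collapses to $(\pi_r)_\ast(\partial_{r+1}\phi)(\cf_{xU_\p^{(r)}})$. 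For the $\gamma_q$ contribution, which occurs only when $n=0$, dividing by $\varpi_\p$ gives $M_r\gamma_q = M_{r-1}$ in $\PGL_{2}(F_\p)$, already of the desired form, so $\phi(\iota(x)M_r\gamma_q) = (\pi_{r-1})^\ast(\partial_{r-1}\phi)(\cf_{xU_\p^{(r)}})$.

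The case $r=0$ needs a small detour because $M_0\in K_\p(\p^n)$ lets me simplify $\partial_0(T_\p\phi)(\cf_{xU_\p}) = (T_\p\phi)(\iota(x))$, and the uniform factorization from the previous paragraph can fail (namely when $a_i+1\notin U_\p$). I split the Hecke sum: the $i=0$ term equals $\iota(x)\gamma_0 = \iota(x\varpi_\p)$, contributing $(\varpi_\p\inv.\partial_0\phi)(\cf_{xU_\p})$; for $i\geq 1$, $a_i\in U_\p$, and the analogous decomposition $\iota(x)\gamma_i = \iota(xa_i)\,M_1\,\iota(a_i)\inv$ with $\iota(a_i)\inv\in K_\p(\p^n)$ applies. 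Summing over $i=1,\dots,q-1$ (where the $a_i$ represent $U_\p/U_\p^{(1)}\cong (\OO_\p/\p)^\ast$) yields $(\pi_0)_\ast(\partial_1\phi)(\cf_{xU_\p})$. Finally, $\iota(x)\gamma_q = \iota(x\varpi_\p\inv)$ in $\PGL_{2}$ produces the last term $(\varpi_\p.\partial_0\phi)(\cf_{xU_\p})$.

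There is no substantial obstacle: the proof is pure local linear algebra. The only point requiring care is the $r=0$ case, where the uniform formula $M_r\gamma_i=\iota(1+\varpi_\p^r a_i)\,M_{r+1}\,\iota(1+\varpi_\p^r a_i)\inv$ is valid only when $1+\varpi_\p^r a_i$ is a unit, forcing the separate treatment of the $i=0$ term and accounting for the extra summand $\varpi_\p\inv.\partial_0\phi$ present in the statement.
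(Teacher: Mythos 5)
Your proof is correct and follows essentially the same approach as the paper's: the same case split on $r=0$ versus $r\geq 1$ and on the $\gamma_q$ contribution, the same conjugation identity $M_r\gamma_i=\iota(1+\varpi_\p^r a_i)\,M_{r+1}\,\iota(1+\varpi_\p^r a_i)^{-1}$, and the same separate treatment of the $i=0$ and $i\geq 1$ terms at level $r=0$ via $\iota(x)\gamma_i=\iota(xa_i)M_1\iota(a_i)^{-1}$. The only difference is cosmetic — you treat the $\gamma_q$ contribution uniformly via $M_r\gamma_q = M_{r-1}$ in $\PGL_2$ rather than writing it out twice, but the computation is identical.
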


\begin{proof}
Firstly, let us treat the extra summand for $n=0$.
If $r=0$, we have
\begin{align*}
 \partial_0(\rho(\gamma_q)\phi)\left(\cf_{x U_\p}\right) &= \phi\left(\begin{pmatrix} x & 0 \\ 0 & 1 \end{pmatrix} \begin{pmatrix} 1 & 0 \\ 0 & \varpi_\p \end{pmatrix}\right) \\
							 &= \phi\left(\begin{pmatrix} \varpi_\p\inv x & 0 \\ 0 & 1 \end{pmatrix}\right) \\
							 &= \varpi_\p. \partial_0(\phi)\left(\cf_{x  U_\p}\right).
\end{align*}
Note that the second equality holds since we are working in $\PGL_2$.
However, if $r>0$, we have
\begin{align*}
 \partial_r(\rho(\gamma_q)\phi)\left(\cf_{x U_\p^{(r)}}\right)  &= \phi(\left(\begin{pmatrix} x & 0 \\ 0 & 1 \end{pmatrix} \begin{pmatrix} \varpi_\p^{r} & 1 \\ 0 & 1 \end{pmatrix} \begin{pmatrix}  1 & 0 \\ 0 & \varpi_\p \end{pmatrix}\right) \\
								&= \phi(\left(\begin{pmatrix} x & 0 \\ 0 & 1 \end{pmatrix} \begin{pmatrix} \varpi_\p^{r} & \varpi_\p \\ 0 & \varpi_\p \end{pmatrix} \right) \\
								&= \phi(\left(\begin{pmatrix} x & 0 \\ 0 & 1 \end{pmatrix} \begin{pmatrix} \varpi_\p^{r-1} & 1 \\ 0 & 1 \end{pmatrix} \right) \\
								&= \partial_{r-1}(\phi)\left(\cf_{x U_\p^{(r-1)}}\right).
\end{align*}
It remains to take care of the other summands.
Thus, without loss of generality, it is enough to consider the case $n>0$.
For $r=0$ we have
\begin{align*}
 \partial_0(T_\p \phi)\left( \cf_{x U_\p} \right) = \partial_0(\phi)\left(\gamma_0 \cf_{x U_\p}\right) + \sum_{i=1}^{q-1} \partial_0(\rho(\gamma_i) \phi)\left(\cf_{x U_\p}\right)
\end{align*}
by definition.
For the first summand $$\partial_0(\rho(\gamma_0) \phi)\left(\cf_{x U_\p}\right) = \phi\left(\begin{pmatrix} \varpi_\p x & 0 \\ 0 & 1 \end{pmatrix}\right) = \varpi_\p\inv. \partial_0(\phi)\left(\cf_{x U_\p}\right)$$ holds.
Due to the fact that $a_i \in \OO^\ast$ for $1 \leq i \leq q-1$, we have $\begin{pmatrix}  a_i & 0 \\ 0 & 1 \end{pmatrix} \in K_\p(\p^n)$ and hence the second summand equals
\begin{align*}
 \sum_{i=1}^{q-1}\phi\left( \begin{pmatrix} x & 0 \\ 0 & 1 \end{pmatrix} \begin{pmatrix} \varpi_\p  & a_i \\ 0 & 1 \end{pmatrix} \right)
    &= \sum_{i=1}^{q-1}\phi\left( \begin{pmatrix} a_i x & 0 \\ 0 & 1 \end{pmatrix} \begin{pmatrix} \varpi_\p  & 1 \\ 0 & 1 \end{pmatrix} \begin{pmatrix} a_i\inv & 0 \\ 0 & 1 \end{pmatrix} \right) \\
    &= \sum_{i=1}^{q-1}a_{i}^{-1}.\phi\left( \begin{pmatrix} x & 0 \\ 0 & 1 \end{pmatrix} \begin{pmatrix} \varpi_\p  & 1 \\ 0 & 1 \end{pmatrix} \right)\\
		&= (\pi_0)_\ast(\partial_1(\phi))\left(\cf_{x U_\p}\right).
\end{align*}
For the remaining case, i.e.~$r,n>0$, we put $u_i = 1 + \varpi_\p^{r} a_i$ for $i=0,\dots,q-1$.
We get
\begin{align*}
  \partial_r(\rho(\gamma_i) \phi)\left(\cf_{x U_\p^{(r)}} \right) 
      &= \phi\left( \begin{pmatrix} x & 0 \\ 0 & 1 \end{pmatrix} \begin{pmatrix} \varpi_\p^r & 1 \\ 0 & 1 \end{pmatrix} \begin{pmatrix} \varpi_\p  & a_i \\ 0 & 1 \end{pmatrix} \right) \\
      &= \phi\left( \begin{pmatrix} x & 0 \\ 0 & 1 \end{pmatrix} \begin{pmatrix} \varpi_\p^{r+1} & u_i \\ 0 & 1 \end{pmatrix} \right) \\
      &= \phi\left( \begin{pmatrix} x & 0 \\ 0 & 1 \end{pmatrix} \begin{pmatrix} u_i & 0 \\ 0 & 1 \end{pmatrix} \begin{pmatrix} \varpi_\p^{r+1} & 1 \\ 0 & 1 \end{pmatrix} \begin{pmatrix} u_i\inv & 0 \\ 0 & 1 \end{pmatrix} \right) \\
      &= u_i^{-1}. \partial_{r+1}(\phi)\left(\cf_{x U_\p^{(r+1)}} \right)
\end{align*}
because $\begin{pmatrix} u_i & 0 \\ 0 & 1 \end{pmatrix}$ is an element of $K_\p(\p^n)$.
Since $\{ u_i \mid i=0,\dots,q-1 \}$ is a set of representatives of $U_\p^{(r)}/U_\p^{(r+1)}$, the claim follows.
\end{proof}

For a ring $R$ we define the Steinberg representation $\St_\p(R)$ to be the space of locally constant $R$-valued functions on $\PP(F_\p)$ modulo constant functions.
The group $G(F_\p)$ acts on $\PP(F_\p)$ by linear fractional transformations and hence, it acts on $\St_\p(R)$ via $(\gamma.f)(z)=f(\gamma \inv z)$ for $\gamma \in G(F_\p)$ and $f \in \St_\p(R)$.
If $R=\Z$, we will simply write $\St_\p$ instead of $\St_\p(R)$.
Extension by zero gives a map
\begin{align*}
 \delta_\p \colon C_c(F_\p,\Z) \too \St_\p,
\end{align*}
i.e.~$\delta_\p(\fii)([x:1])=\fii(x)$ and $\delta_\p(\fii)([1:0])=0$. 
This map is in fact bijective.
The inverse is given by 
\begin{align*}
 \St_\p \too C_c(F_\p,\Z),\  f \mapstoo \fii_f \mbox{, where } \fii_f(x) = f([x:1])-f([1:0]).
\end{align*}
The Borel subgroup $B(F_\p)$ acts on $C_c(F_\p,\Z)$ via
\begin{align*}
 \left(\begin{pmatrix} a & x \\ 0 & 1 \end{pmatrix} . \fii\right)(z)=\fii(a\inv(z-x)),
\end{align*}
where $\begin{pmatrix} a & x \\ 0 & 1 \end{pmatrix} \in B(F_\p)$, $\fii \in C_c(F_\p,\Z)$ and $z \in F_\p$.
The following lemma is a simple calculation.

\begin{Lemma}\label{equivariance}
The map $\delta_\p$ is $B(F_\p)$-equivariant.
\end{Lemma}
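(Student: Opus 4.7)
The proof is a direct computation from the definitions; the work is just to unwind the two actions and confirm they match at every point of $\PP(F_\p)$. The plan is to fix a general element $\beta=\begin{pmatrix} a & x \\ 0 & 1 \end{pmatrix}\in B(F_\p)$ and check the identity $\delta_\p(\beta.\fii)=\beta.\delta_\p(\fii)$ pointwise on $\PP(F_\p)=\{[z:1]\mid z\in F_\p\}\cup\{[1:0]\}$.

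First I would record that in $\PGL_2(F_\p)$ we have
\begin{align*}
\beta^{-1}=\begin{pmatrix} a^{-1} & -a^{-1}x \\ 0 & 1 \end{pmatrix},
\end{align*}
so that the linear fractional action gives $\beta^{-1}.[z:1]=[a^{-1}(z-x):1]$ for $z\in F_\p$, while $\beta^{-1}.[1:0]=[1:0]$. In particular, the Borel stabilizes the cusp at infinity, which is precisely what makes extension-by-zero into a $B(F_\p)$-equivariant construction.

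Next I would evaluate both sides at an affine point $[z:1]$: on the Steinberg side,
\begin{align*}
(\beta.\delta_\p(\fii))([z:1])=\delta_\p(\fii)(\beta^{-1}.[z:1])=\delta_\p(\fii)([a^{-1}(z-x):1])=\fii(a^{-1}(z-x)),
\end{align*}
while on the other side
\begin{align*}
\delta_\p(\beta.\fii)([z:1])=(\beta.\fii)(z)=\fii(a^{-1}(z-x))
\end{align*}
directly from the definition of the $B(F_\p)$-action on $C_c(F_\p,\Z)$. Evaluating at $[1:0]$, both sides are zero: the right-hand side by the definition of $\delta_\p$, and the left-hand side because $\beta^{-1}$ fixes $[1:0]$. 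Since the two functions agree on all of $\PP(F_\p)$, they define the same element of $\St_\p$, and the equivariance of $\delta_\p$ follows.

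There is no real obstacle here; the only conceptual point is the invariance of $[1:0]$ under $B(F_\p)$, which is what makes passing between $C_c(F_\p,\Z)$ and $\St_\p$ via $\delta_\p$ compatible with the two actions. (Note that this will fail for general $\gamma\in G(F_\p)$, since an arbitrary element does not fix the cusp $[1:0]$, and this is precisely why the statement is restricted to $B(F_\p)$.)
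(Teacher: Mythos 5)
Your computation is correct and is exactly the ``simple calculation'' the paper alludes to but does not write out: one verifies pointwise on $\PP(F_\p)$ that the two sides agree, using that $\beta^{-1}$ acts on affine points by $[z:1]\mapsto[a^{-1}(z-x):1]$ and fixes the cusp $[1:0]$. Nothing more is needed, and your remark that the Borel stabilizing $[1:0]$ is what makes the lemma work (and fail for general $\gamma\in G(F_\p)$) correctly identifies the only nontrivial point.
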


Let us fix the element $\fii_\p=\cf_{\OO_\p}$ of $\St_\p$.
It is easy to see that $\fii_\p$ is invariant under the Iwahori subgroup $K_\p(\p)$.
Hence, for every $R$-module $N$ evaluation at $\fii_\p$ defines a map
\begin{align*}
 \ev_\p \colon \Hom(\St_\p,N) \too C(G(F_\p)/K_\p(\p), N),\ \ev_\p(\phi)(g)=\phi(g \fii_\p).
\end{align*}
Let $r \geq 0$ be an integer.
We define $$\eta_r \colon \Dist(F_\p,N) \too \Dist(F_\p^\ast/U_\p^{(r)},N)$$ as the composition of the two maps
\begin{align*}
 \Dist(F_\p,N) \stackrel{\eqref{opensub}}{\too} \Dist(F_\p^\ast,N)
\end{align*}
and
\begin{align*}
 \Dist(F_\p^\ast,N) \stackrel{\eqref{properpush}}{\too} \Dist(F_\p^\ast/U_\p^{(r)},N).
\end{align*}

\begin{Lemma}\label{LocalDiagram}
Let $r \geq 1$ be an integer.
Then the following diagram is commutative:
  \begin{center}
  \begin{tikzpicture}
    \path 	(0,0) 	node[name=A]{$\Hom(\St_\p,N)$}
		(5,0) 	node[name=B]{$C(G(F_\p)/K_\p(\p),N)$}
		(0,-2) 	node[name=C]{$\Dist(F_\p,N)$}
		(5,-2) 	node[name=D]{$\Dist(F_\p^\ast/U_\p^{(r)},N)$};
    \draw[->] (A) -- (B) node[midway, above]{$\ev_\p$};
    \draw[->] (A) -- (C) node[midway, left]{$\delta_\p^\ast$};
    \draw[->] (B) -- (D) node[midway, right]{$\partial_r$};
    \draw[->] (C) -- (D) node[midway, below]{$\eta_r$};
  \end{tikzpicture} 
  \end{center}
\end{Lemma}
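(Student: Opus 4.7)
The plan is to verify commutativity by evaluating both paths on a generic basis element $\cf_{xU_\p^{(r)}}$, with $x\in F_\p^\ast$, and reducing the required identity to a single explicit equality inside $\St_\p$.

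First, unwinding the right-hand path directly from the definitions of $\ev_\p$ and $\partial_r$, for any $\phi\in\Hom(\St_\p,N)$ one obtains
$$(\partial_r\circ\ev_\p)(\phi)\bigl(\cf_{xU_\p^{(r)}}\bigr) = \phi\left(\begin{pmatrix} x & 0 \\ 0 & 1\end{pmatrix}\begin{pmatrix} \varpi_\p^r & 1 \\ 0 & 1\end{pmatrix}.\fii_\p\right).$$
For the left-hand path, I would trace $\eta_r$ through its two-step definition, namely the open-embedding map (\ref{opensub}) followed by the pushforward (\ref{properpush}). Both the extension by zero along $F_\p^\ast\hookrightarrow F_\p$ and the pullback along the quotient $F_\p^\ast\to F_\p^\ast/U_\p^{(r)}$ send $\cf_{xU_\p^{(r)}}$ to itself on the appropriate larger space, since $xU_\p^{(r)}$ is a single coset entirely contained in $F_\p^\ast$. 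Therefore
$$(\eta_r\circ\delta_\p^\ast)(\phi)\bigl(\cf_{xU_\p^{(r)}}\bigr) = \phi\bigl(\delta_\p(\cf_{xU_\p^{(r)}})\bigr).$$

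Comparing, commutativity reduces to the identity
$$\begin{pmatrix} x & 0 \\ 0 & 1\end{pmatrix}\begin{pmatrix} \varpi_\p^r & 1 \\ 0 & 1\end{pmatrix}.\fii_\p = \delta_\p\bigl(\cf_{xU_\p^{(r)}}\bigr)$$
in $\St_\p$. Since $\fii_\p = \delta_\p(\cf_{\OO_\p})$ and both matrices lie in $B(F_\p)$, Lemma \ref{equivariance} lets one move the action of their product through $\delta_\p$. The verification then reduces to an equality of compactly supported functions on $F_\p$: applying the Borel action formula shows that the product matrix sends $\cf_{\OO_\p}$ to the characteristic function of $x+x\varpi_\p^r\OO_\p$, and this coincides with $\cf_{xU_\p^{(r)}}$ by virtue of the identification $U_\p^{(r)} = 1+\varpi_\p^r\OO_\p$, which is valid precisely because $r\geq 1$.

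There is no substantial obstacle: the argument is a careful bookkeeping of the various functoriality conventions set up in Section 1.1, applied to a single instance of equivariance provided by Lemma \ref{equivariance}. The only point requiring attention is the hypothesis $r\geq 1$, which is essential for the identification of $U_\p^{(r)}$ with $1+\varpi_\p^r\OO_\p$ that drives the final computation; one should also note that the computation proceeds in $G=\PGL_2$, so that scaling both columns of the matrix by the same factor leaves the action unchanged.
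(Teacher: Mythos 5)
Your proof is correct and follows essentially the same route as the paper's: evaluate both paths on the basis elements $\cf_{xU_\p^{(r)}}$, use the definitions of $\ev_\p$, $\partial_r$, $\delta_\p^\ast$, and $\eta_r$ to reduce to a single identity in $\St_\p$, and verify that identity by a Borel-action computation (which is the content of the paper's second displayed equality). You spell out the step the paper leaves implicit — namely that $\begin{pmatrix}x & 0\\ 0 & 1\end{pmatrix}\begin{pmatrix}\varpi_\p^r & 1\\ 0 & 1\end{pmatrix}\cf_{\OO_\p}=\cf_{xU_\p^{(r)}}$ via Lemma \ref{equivariance} and the identification $U_\p^{(r)}=1+\varpi_\p^r\OO_\p$ for $r\geq 1$ — which is exactly where the hypothesis $r\geq 1$ is used.
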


\begin{proof}
We have
\begin{align*}
 \partial_r(\ev_\p(\phi))\left(\cf_{xU_\p^{(r)}}\right) 	&= \phi\left(\begin{pmatrix} x & 0 \\ 0 & 1 \end{pmatrix} \begin{pmatrix} \varpi_\p^r & 1 \\ 0 & 1 \end{pmatrix} \cf_{\OO_\p}\right) \\
								&= \phi\left(\cf_{xU_\p^{(r)}}\right) \\
								&= \eta_r(\delta_\p^\ast(\phi))\left(\cf_{xU_\p^{(r)}}\right),
\end{align*}
where the last equality holds by definition.
\end{proof}

Finally, we want to see how the maps $\partial_{r}$ and the Atkin-Lehner involution intertwine.
The functional equation for Stickelberger elements (see Proposition \ref{funceq}) will be a direct consequence of this.
Let
\begin{align*}
 \invol \colon \Dist(F_\p^\ast/U_\p^{(r)},N) \too \Dist(F_\p^\ast/U_\p^{(r)},N)
\end{align*}
be the map induced by inversion (i.e.~the map which sends $x$ to $x^{-1}$).
\begin{Lemma}\label{localfuneq} \renewcommand{\labelenumi}{(\roman{enumi})}
Let $n\geq 0$ be an integer.
\begin{enumerate}
\item  The equality
\begin{align*}
 \partial_{0}(W_{\p^n}\phi)=\varpi_{\p}^{n}. \invol \left(\partial_{0}\left(\begin{pmatrix} 0 & 1 \\ 1 & 0 \end{pmatrix}.\phi \right)\right)
\end{align*}
holds for all $\phi\in C(G(F_\p)/K_\p(\p^n),N)$.
\item Assume that $r\geq n$.
Then
\begin{align*}
 \partial_{r}(\phi)=\invol \left(\partial_{r}\left(\begin{pmatrix} 0 & 1 \\ 1 & 0 \end{pmatrix}.\phi \right)\right)
\end{align*}
holds for all $\phi\in C(G(F_\p)/K_\p(\p^n),N)$.
\end{enumerate}
\end{Lemma}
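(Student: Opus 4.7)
The plan is to prove both parts by a direct matrix computation: expand each side using the definitions of $\partial_{r}$, $W_{\p^{n}}$, the inversion involution, and the various actions, and then reduce modulo the centre of $\GL_{2}$ and the right $K_{\p}(\p^{n})$-invariance of $\phi$. No new ingredients beyond what has already been set up in Section \ref{LocalHecke} are needed.

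For part (i), I would first compute, from the definitions of $\partial_{0}$ and $W_{\p^{n}}$,
\begin{align*}
\partial_{0}(W_{\p^{n}}\phi)\!\left(\cf_{xU_{\p}}\right)=\phi\!\left(\begin{pmatrix}x&0\\ 0&1\end{pmatrix}\begin{pmatrix}0&1\\ \varpi_{\p}^{n}&0\end{pmatrix}\right)=\phi\!\left(\begin{pmatrix}0&x\\ \varpi_{\p}^{n}&0\end{pmatrix}\right).
\end{align*}
On the right-hand side, I would unfold $\invol$ (which replaces the test function $\cf_{yU_{\p}^{(r)}}$ by $\cf_{y^{-1}U_{\p}^{(r)}}$), the $F_{\p}^{\ast}$-action via $\iota(\varpi_{\p}^{n})$ (which translates the coset representative by $\varpi_{\p}^{n}$), and the definition of $\partial_{r}$ applied to $w.\phi$, where $w=\begin{pmatrix}0&1\\ 1&0\end{pmatrix}$. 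After simplification the right-hand side becomes $\phi$ evaluated at a matrix that, in $\PGL_{2}$, agrees with $\begin{pmatrix}0&x\\ \varpi_{\p}^{n}&0\end{pmatrix}$ up to a central scalar (and, if necessary, a right factor in $K_{\p}(\p^{n})$), giving (i).

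Part (ii) is essentially a single matrix identity. Unfolding the definitions,
\begin{align*}
\partial_{r}(\phi)\!\left(\cf_{xU_{\p}^{(r)}}\right)=\phi\!\left(\begin{pmatrix}x\varpi_{\p}^{r}&x\\ 0&1\end{pmatrix}\right),
\end{align*}
while the right-hand side becomes $\phi\!\left(\begin{pmatrix}0&x\\ \varpi_{\p}^{r}&1\end{pmatrix}\right)$ after absorbing scalars in $\PGL_{2}$. A direct computation in $\PGL_{2}$ gives
\begin{align*}
\begin{pmatrix}0&x\\ \varpi_{\p}^{r}&1\end{pmatrix}^{-1}\begin{pmatrix}x\varpi_{\p}^{r}&x\\ 0&1\end{pmatrix}=\begin{pmatrix}1&0\\ -\varpi_{\p}^{r}&-1\end{pmatrix},
\end{align*}
and the matrix on the right lies in $K_{\p}(\p^{n})$ precisely under the hypothesis $r\geq n$. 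Hence right $K_{\p}(\p^{n})$-invariance of $\phi$ yields (ii).

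The main obstacle is purely notational bookkeeping: the formulas intertwine several structures---the left action of $G(F_{\p})$ on functions on $G(F_{\p})/K_{\p}(\p^{n})$, the operator $\rho$ defined by right-multiplication and used in $W_{\p^{n}}$, the $F_{\p}^{\ast}$-action via $\iota$ on the distribution spaces, and the inversion involution $\invol$---all while passing back and forth between $\GL_{2}$ and $\PGL_{2}$. Once the conventions are tracked carefully, both identities collapse to the elementary matrix manipulations above.
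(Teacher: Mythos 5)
Your proposal is correct and follows essentially the same route as the paper's proof: both parts reduce to the same elementary matrix identities in $\PGL_2$, with the same key observation in (ii) that the $K_{\p}(\p^{n})$-invariance is invoked via a lower-triangular matrix with bottom-left entry $\pm\varpi_{\p}^{r}$, whence the hypothesis $r\geq n$. The only minor difference is cosmetic — you compute the correction matrix as an inverse times a product, while the paper passes the Weyl element through $\begin{pmatrix}\varpi_{\p}^{r}&1\\0&1\end{pmatrix}$ directly — and your hedge in (i) about a possible right $K$-factor is unnecessary (only a central scalar appears there); note also that the $\partial_{r}$ in the statement of (i) is a typo for $\partial_{0}$, as your own verification confirms.
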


\begin{proof}
(i) Using the fact that we are working in $\PGL_{2}$ we get
\begin{align*}
 \partial_{0}(W_{\p^n}\phi)(\cf_{xU_{\p}})
    &= \phi \left(\begin{pmatrix} x & 0 \\ 0 & 1 \end{pmatrix}\begin{pmatrix} 0 & 1 \\ \varpi_{\p}^{n} & 0 \end{pmatrix}\right)\\
    &= \phi \left(\begin{pmatrix} 0 & 1 \\ 1 & 0 \end{pmatrix}\begin{pmatrix} \varpi_{\p}^{n} & 0 \\ 0 & x \end{pmatrix}\right)\\
    &= \phi \left(\begin{pmatrix} 0 & 1 \\ 1 & 0 \end{pmatrix}\begin{pmatrix} \varpi_{\p}^{n} x^{-1} & 0 \\ 0 & 1 \end{pmatrix}\right)\\
    &= \varpi_{\p}^{n}.\invol \left(\partial_{0}\left(\begin{pmatrix} 0 & 1 \\ 1 & 0 \end{pmatrix}.\phi \right)\right)(\cf_{xU_{\p}}).
\end{align*}
(ii) Since
\begin{align*}
 \begin{pmatrix} 0 & 1 \\ 1 & 0 \end{pmatrix}\begin{pmatrix} \varpi^{r}_{\p} & 1 \\ 0 & 1 \end{pmatrix} =\begin{pmatrix} 0 & 1 \\ \varpi^{r}_{\p} & 1 \end{pmatrix} =\begin{pmatrix} \varpi^{r}_{\p} & 1 \\ 0 & 1 \end{pmatrix}\begin{pmatrix} -1 & 0 \\ \varpi^{r}_{\p} & 1 \end{pmatrix}
\end{align*}
holds and by assumption
\begin{align*}
 \begin{pmatrix} -1 & 0 \\ \varpi^{r}_{\p} & 1 \end{pmatrix} \in K_{\p}(\p^{n})
\end{align*}
we have
\begin{align*}
 \partial_{r}(\phi)(\cf_{xU_{\p}})
    &= \phi \left(\begin{pmatrix} x & 0 \\ 0 & 1 \end{pmatrix}\begin{pmatrix} \varpi^{r}_{\p} & 1 \\ 0 & 1 \end{pmatrix}\right)\\
    &= \phi \left(\begin{pmatrix} x & 0 \\ 0 & 1 \end{pmatrix}\begin{pmatrix} \varpi^{r}_{\p} & 1 \\ 0 & 1 \end{pmatrix}\begin{pmatrix} -1 & 0 \\ \varpi^{r}_{\p} & 1 \end{pmatrix}\right)\\
    &= \phi \left(\begin{pmatrix} x & 0 \\ 0 & 1 \end{pmatrix}\begin{pmatrix} 0 & 1 \\ 1 & 0 \end{pmatrix}\begin{pmatrix} \varpi^{r}_{\p} & 1 \\ 0 & 1 \end{pmatrix}\right)\\
    &= \phi \left(\begin{pmatrix} 0 & 1 \\ 1 & 0 \end{pmatrix}\begin{pmatrix} x^{-1} & 0 \\ 0 & 1 \end{pmatrix}\begin{pmatrix} \varpi^{r}_{\p} & 1 \\ 0 & 1 \end{pmatrix}\right)\\
    &=\invol \left( \partial_{r}\left(\begin{pmatrix} 0 & 1 \\ 1 & 0 \end{pmatrix}.\phi \right)\right)(\cf_{xU_{\p}}),
\end{align*}
which proves the claim.
\end{proof}
\subsection{Cohomology of $\PGL_2(F)$} \label{cohomology}
We introduce modular symbols for $\PGL_2(F)$ in terms of group cohomology.
Our treatment is similar to the one of Spie\ss~in \cite{Sp}.

Let $\Div(\PP(F))$ be the free abelian group over $\PP(F)$ and let $\Div_0(\PP(F))$ be the kernel of the map $$\Div(\PP(F)) \to \Z,\ \sum_P m_P P \mapsto \sum_P m_P.$$
Note that we have a $G(F)$-action on $\Div_0(\PP(F))$ induced by the $G(F)$-action on $\PP(F)$.

Given a ring $R$ and a finite set $S$ of finite places of $F$, we define the semi-local Steinberg representation at $S$ as $$\St_S(R)=\otimes_{\q \in S} \St_\q(R).$$
As in the local case, if $R=\Z$, we will simply write $\St_S$ instead of $\St_S(R)$.
For an $R$-module $N$ and a compact open subgroup $K \subseteq G(\A^\infty)$ we define
\begin{align}\label{Ah}
 \Ah(K,S;N) &= C(G(\A\sinfty)/K^{S},\Hom_\Z(\St_S,\Hom_\Z(\Div_0(\PP(F)),N))).
\end{align}
The $G(F)$-action on $\Ah(K,S;N)$ is given by $(\gamma.\Phi)(g)=\gamma \Phi(\gamma\inv g)$, where $\Phi \in \Ah(K,S;N)$, $\gamma \in G(F)$ and $g \in G(\A\sinfty)$.

We now fix locally constant homomorphisms $\epsilon_v \colon F_v^\ast \to \{\pm 1\} = \Z^\ast$ for every Archimedean place $v$ and define $\epsilon \colon F_\infty^\ast \to \{\pm 1\}$ via $$\epsilon((x_v)_{v \in S_\infty}) = \prod_{v \in S_\infty} \epsilon_v(x_v).$$
By abuse of notation we will also write $\epsilon$ for the homomorphism $G(F_\infty) \xrightarrow{\det} F_\infty^\ast \xrightarrow{\epsilon} \{ \pm 1 \}$.
We are interested in the cohomology of the $G(F)$-modules $\Ah(K,S;N)(\epsilon)$.

\begin{Proposition}\label{FlachundNoethersch} \renewcommand{\labelenumi}{(\roman{enumi})}
Let $S$ be a finite set of finite places of $F$ and $K \subseteq G(\A\sinfty)$ a compact open subgroup.
\begin{enumerate}
\item Let $N$ be a flat $R$-module equipped with the trivial $G(F)$-action. Then the canonical map
\begin{align*}
 H^q(G(F),\Ah(K,S;R)(\epsilon)) \otimes_R N \to H^q(G(F),\Ah(K,S;N)(\epsilon))
\end{align*}
is an isomorphism for all $q \geq 0$.
\item If $R$ is Noetherian, then the groups $H^q(G(F),\Ah(K,S;R)(\epsilon))$ are finitely generated $R$-modules for all $q \geq 0$.
\end{enumerate}
\end{Proposition}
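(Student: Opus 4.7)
The plan is to reduce both claims, via Shapiro's lemma, to analogous finiteness and flat-base-change statements for the cohomology of arithmetic congruence subgroups of $G(F)=\PGL_2(F)$, and then to construct a finite-length resolution of the coefficient module by permutation $\Z[\Gamma]$-modules whose stabilizers have finitely generated integral cohomology. Concretely: since $K^S\subset G(\A\sinfty)$ is open, $G(\A\sinfty)/K^S$ is a discrete $G(F)$-set. By strong approximation for $\SL_2$, it decomposes into finitely many $G(F)$-orbits with arithmetic congruence stabilizers $\Gamma_1,\dots,\Gamma_h\subset G(F)$. Setting $V:=\St_S\otimes_\Z\Div_0(\PP(F))$, one obtains a $G(F)$-equivariant isomorphism
\[
\Ah(K,S;N)(\epsilon)\;\cong\;\bigoplus_{i=1}^{h}\Coind_{\Gamma_i}^{G(F)}\Hom_\Z\bigl(V,N(\epsilon)\bigr),
\]
and Shapiro's lemma reduces both claims to the corresponding statements for $H^q(\Gamma_i,\Hom_\Z(V,N(\epsilon)))$, which (using $\Z$-freeness of $V$) equals $\operatorname{Ext}^q_{\Z[\Gamma_i]}(V,N(\epsilon))$.

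Next, construct a resolution $P_\bullet\to V$ of bounded length whose terms are finite direct sums of induced modules $\Ind_H^{\Gamma_i}\Z$, with each $H\subset\Gamma_i$ of finite virtual cohomological dimension and with finitely generated integral cohomology. Three geometric inputs combine: the short exact sequence $0\to\Div_0(\PP(F))\to\Div(\PP(F))\to\Z\to 0$ together with the finite cusp decomposition $\Div(\PP(F))=\bigoplus_{[c]}\Ind_{\Gamma_{i,c}}^{\Gamma_i}\Z$ (the cusp stabilizers $\Gamma_{i,c}$ being virtually polycyclic); for each $\p\in S$, the augmented cellular chain complex of the Bruhat--Tits tree $T_\p$, which furnishes a finite resolution of $\St_\p$ by permutation modules over vertex and edge stabilizers of $\Gamma_i$ on $T_\p$; and a Borel--Serre compactification of the symmetric space, giving a length-$d$ $\Z[\Gamma_i]$-resolution of $\Z$. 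The hyperext spectral sequence
\[
E_1^{p,q}\;=\;\operatorname{Ext}^q_{\Z[\Gamma_i]}(P_p,N(\epsilon))\;\Longrightarrow\;\operatorname{Ext}^{p+q}_{\Z[\Gamma_i]}(V,N(\epsilon))
\]
then has only finitely many nonzero columns; by Shapiro, each $E_1^{p,q}$ is a finite direct sum of groups $H^q(H,N(\epsilon)|_H)$ for the stabilizers $H$ listed above. Because every such $H$ admits a finite free $\Z[H]$-resolution of $\Z$, the cohomology $H^q(H,R(\epsilon))$ is finitely generated over Noetherian $R$ (yielding (ii)) and commutes with flat base change $R\to N$ (yielding (i)); both properties pass to the abutment since the spectral sequence has bounded support.

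The central technical difficulty lies in the construction of the above resolution---specifically, in combining the local Bruhat--Tits chain complexes at each $\p\in S$ with the global cusp decomposition and the Borel--Serre resolution into a single finite-length permutation resolution of $V=\St_S\otimes_\Z\Div_0(\PP(F))$. The Steinberg factor $\St_\p$ is not a finitely generated $\Z[\Gamma_i]$-module, so one must ensure that the vertex and edge stabilizers appearing in its tree-theoretic resolution remain cohomologically tame after intersection with $\Gamma_i$, and that the tensor product of all three resolutions stays of finite length with tame stabilizers throughout. Once the resolution is in hand, the spectral-sequence argument is formal.
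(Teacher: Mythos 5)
Your overall strategy---strong approximation to split $G(\A\sinfty)/K^S$ into finitely many $G(F)$-orbits, Shapiro's lemma to reduce to $S$-arithmetic subgroups $\Gamma_i$, and a finite-length resolution of $V=\St_S\otimes_{\Z}\Div_0(\PP(F))$ built from the Bruhat--Tits trees at each $\p\in S$ together with the cusp decomposition and the Borel--Serre bordification, fed into a bounded hyper-$\operatorname{Ext}$ spectral sequence---is precisely the one underlying Proposition 4.6 of the article of Spie\ss\ which the authors cite as their proof, so the route is the right one.

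Two remarks on the execution. First, the assertion that ``every such $H$ admits a finite free $\Z[H]$-resolution of $\Z$'' is false as stated: the stabilizers that appear (cusp stabilizers, vertex and edge stabilizers on the trees, and the $\Gamma_i$ themselves) are ($S$-)arithmetic groups and typically contain torsion, hence have infinite cohomological dimension over $\Z$. What one actually has, via Borel--Serre applied to a torsion-free normal subgroup of finite index together with a transfer or Hochschild--Serre argument, is that each such $H$ is of type $\mathrm{FP}_\infty$, i.e.\ $\Z$ admits a resolution by finitely generated projective $\Z[H]$-modules; this already yields both finiteness of $H^q(H,R(\epsilon))$ over Noetherian $R$ and flat base change $H^q(H,R(\epsilon))\otimes_R N\cong H^q(H,N(\epsilon))$, and these properties pass to the abutment of the bounded spectral sequence exactly as you say. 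Second, as you yourself note, the construction of the finite-length permutation resolution of $V$---verifying that the orbit counts are finite and that all stabilizers stay of type $\mathrm{FP}_\infty$---is where essentially all the work lies, and you leave it unestablished; that is also the technical content of the cited proof, so this is an honest deferral rather than a wrong turn, but as written the proposal does not yet constitute a complete argument.
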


\begin{proof}
This is almost verbatim Proposition 4.6. of \cite{Sp}.
\end{proof}

\begin{Def}
The module of $N$-valued, $S$-special modular symbols of $F$ of level $K$ and sign $\epsilon$ is given by $$\mathcal{M}(K,S;N)^{\epsilon} = H^{d-1}(G(F),\Ah(K,S;N)(\epsilon)).$$
If $S$ is the empty set, we will write $\mathcal{M}(K;N)^{\epsilon}$ instead of $\mathcal{M}(K,\emptyset;N)^{\epsilon}$.
\end{Def}

As in the local setting, given $g \in G(\A\sinfty)$ and a compact open subgroup $K \subseteq G(\A^\infty)$ we let $$\mathcal{R}(g) \colon \Ah(K,S;N)(\epsilon) \too \Ah(g\inv K g, S;N)(\epsilon)$$ be the map given by right-multiplication.
Since this map is $G(F)$-equivariant it induces a map on cohomology groups
$$\mathcal{R}(g)\colon \mathcal{M}(K,S;N)^{\epsilon} \too \mathcal{M}(gKg^{-1},S;N)^{\epsilon}.$$

Let $\mathfrak{n}\subseteq\mathcal{O}$ be a non-zero ideal.
We are mainly interested in the case that the level is given by $K=K_0(\n)=\prod_\p K_\p(\n)$ with $K_\p(\n)=K_\p(\n \OO_\p)$.
Since $\mathcal{M}(K_{0}(\mathfrak{n}),S;N)^\epsilon=\mathcal{M}(K_{0}(\mathfrak{n}\mathfrak{q}),S;N)^{\epsilon}$ for $\mathfrak{q} \in S$ we will always assume without loss of generality that every $\mathfrak{q}\in S$ divides $\mathfrak{n}$ exactly once.
Using the same notation as in Section \ref{LocalHecke} (and viewing $G(F_{\p})$ as a subgroup of $G(\A^{\infty})$ via the canonical embedding) we define for every finite place $\mathfrak{p}\notin S$ the global $\p$-Hecke operator $$T_\p \colon \mathcal{M}(K_0(\n),S;N)^{\epsilon} \to \mathcal{M}(K_0(\n),S;N)^{\epsilon}$$ via
\begin{align*}
 T_\p = \sum_{i=0}^{q-1} \mathcal{R}(\gamma_i) + \cf_\p(\n) \mathcal{R}(\gamma_q) \mbox{, where } \cf_\p(\n)= \cf_\p(\n \OO_\p).
\end{align*}
We fix the element $\fii_{S}=\otimes_{\mathfrak{q}\in S}\fii_{\mathfrak{q}}=\otimes_{\mathfrak{q}\in S}\cf_{\mathcal{O}_{\mathfrak{q}}}$ in $\St_{S}$.
Evaluation at $\fii_{S}$ induces a map on cohomology groups $$\Ev_{S}\colon \mathcal{M}(K_0(\n),S;N)^{\epsilon} \too \mathcal{M}(K_0(\n);N)^{\epsilon},$$ which commutes with the action of the Hecke operators $T_{\p}$ for $\p\notin S$.
We decompose $\n$ into a product $\n_{1}\cdot\n_2$ of coprime ideals and define the element $w_{\n_1} \in G(\A^\infty)$ by
\begin{align*}
 (w_{\n_1})_\p = \begin{cases}
                  \begin{pmatrix}0&1\\ \varpi_\p^{\ord_{\p}(\n_1)}&0\end{pmatrix} & \mbox{if $\p \mid \n_1$,}\\
		  \begin{pmatrix} 1 & 0 \\ 0 & 1 \end{pmatrix} & \mbox{else,}
		 \end{cases} 
\end{align*}
where $\varpi_\p$ is a local uniformizer at $\p$.
The global Atkin-Lehner $W_{\n_1}$ is given by
\begin{align*}
 W_{\n_1}=\mathcal{R}(w_{\n_1})\colon \mathcal{M}(K_0(\n);N)^{\epsilon} \to \mathcal{M}(K_0(\n);N)^{\epsilon}.
\end{align*}
\subsection{Pullback to $\mathbb{G}_m$}\label{Delta}
Using the local maps studied in Section \ref{LocalHecke} we attach distribution valued cohomology classes to modular symbols. 

Firstly, given any finite set $S$ of finite places of $F$ we use (\ref{product}) to define a semi-local version
\begin{align*}
 \delta_S=\otimes_{\mathfrak{q} \in S} \delta_\mathfrak{q} \colon C_c(F_S,\Z) \to \St_S
\end{align*}
of the map $\delta_\mathfrak{q}$ of Section \ref{LocalHecke}.
As in the local case, this map is bijective and $B(F_{S})$-equivariant.

For a set $S$ of places as before, an open subgroup $\widetilde{U}\subseteq U^{\infty}$, a ring $R$ and an $R$-module $N$ we define
\begin{align*}
 \mathcal{D}(\widetilde{U},S;N)=\Dist(\I\sinfty/\widetilde{U}^S\times F_S,N).
\end{align*}
By (\ref{product2}) and (\ref{discrete}) we have an $\I^{\infty}$-equivariant isomorphism
\begin{align}\label{ident}
 \mathcal{D}(\widetilde{U},S;N)\cong C(\I\sinfty/\widetilde{U}^S, \Dist(F_S,N)).
\end{align}
We are mostly interested in the case where $\tild{U}=U(\m)$ for a non-zero ideal $\m$ of $\OO$.
To shorten the notation we set $\mathcal{D}(\m,S;N)=\mathcal{D}(U(\m),S;N)$ and write $\mathcal{D}(\mathfrak{m};N)$ instead of $\mathcal{D}(U(\m),\emptyset;N)$.

If $K\subseteq G(\A^{\infty})$ is a compact open subgroup, we write $U(K) \subseteq U^{\infty}$ for the preimage of $K$ under $\iota$. 
Next, we construct a global version
\begin{align*}
 \Delta_{g,S} = \Delta_{g}(K,S,N,\epsilon) \colon \Ah(K,S;N)(\epsilon) \too \mathcal{D}(U(gKg\inv),S;N)(\epsilon)
\end{align*}
of the dual of the semi-local map $\delta_{S}$ for every compact open subgroup $K\subseteq G(\A^{\infty})$ and every $g\in G(\A^{S,\infty})$.
Given $\Phi \in \Ah(K,S;N)$, $x \in \I\sinfty/U(gKg\inv)$ and $\fii \in C_c(F_S,\Z)$ we define
\begin{align*}
 \Delta_{g,S}(\Phi)(x,\fii)= \Phi \left(\begin{pmatrix} x & 0 \\ 0 & 1 \end{pmatrix} g\right) \left(\delta_{S}(\fii)\right)([0:1]-[1:0])
\end{align*}
using the identification (\ref{ident}).

We have an $\I$-action on $\mathcal{D}(U(K),S;N)(\epsilon)$ induced by left-multiplication on $\I\sinfty \times F_S$ and an $\I$-action on $\Ah(K,S;N)(\epsilon)$ via the embedding $\iota\colon \I \to G(\A)$.
As an immediate consequence of Lemma \ref{equivariance} we get

\begin{Proposition}\label{equivariance2}
$\Delta_{g,S}$ is $\I$-equivariant.
\end{Proposition}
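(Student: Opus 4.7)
The plan is to verify $\I$-equivariance by a direct unwinding of both actions, with the only nontrivial input being a semi-local version of Lemma \ref{equivariance}. Decompose $t \in \I$ as $t = (t^{S,\infty}, t_S, t_\infty)$ according to $\I = \I^{S,\infty} \times F_S^{\ast} \times F_\infty^{\ast}$. The action of $t$ on $\Ah(K,S;N)(\epsilon)$ via $\iota$ then has three ingredients: $\iota(t^{S,\infty})$ acts by left-translation on the argument in $G(\A^{S,\infty})/K^S$, $\iota(t_S) \in B(F_S)$ acts on the $\St_S$-slot through the natural $G(F_S)$-action, and $t_\infty$ contributes the scalar $\epsilon(t_\infty)$ coming from the twist. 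The action on $\mathcal{D}(U(gKg^{-1}),S;N)(\epsilon)$ splits in parallel: $t^{S,\infty}$ translates the first factor $\I^{S,\infty}/U(gKg^{-1})^{S,\infty}$, $t_S$ acts on $F_S$ by multiplication, and $\epsilon(t_\infty)$ again provides a sign.

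First I would match the $t^{S,\infty}$-components. The identity $\iota(t^{S,\infty})^{-1} \begin{pmatrix} x & 0 \\ 0 & 1 \end{pmatrix} = \begin{pmatrix} (t^{S,\infty})^{-1} x & 0 \\ 0 & 1 \end{pmatrix}$, combined with the fact that $g \in G(\A^{S,\infty})$ is untouched by the $S$- and $\infty$-components of $\iota(t)$, reduces the evaluation of $(t.\Phi)$ at $\begin{pmatrix} x & 0 \\ 0 & 1 \end{pmatrix} g$ to the evaluation of $\Phi$ at $\begin{pmatrix} (t^{S,\infty})^{-1} x & 0 \\ 0 & 1 \end{pmatrix} g$, which is exactly the first-factor shift on the $\mathcal{D}$-side.

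The main step is the $t_S$-component: using the defining adjunction of the $G(F_S)$-action on $\Hom_\Z(\St_S,-)$, I would move $\iota(t_S)$ from outside the expression $\Phi(\cdots)$ onto the test element $\delta_S(\fii)$, producing $\iota(t_S)^{-1}.\delta_S(\fii)$. The semi-local version of Lemma \ref{equivariance}, obtained by tensoring the local statement over $\mathfrak{q} \in S$ by means of \eqref{product}, asserts that $\delta_S = \bigotimes_{\mathfrak{q}\in S}\delta_\mathfrak{q}$ is $B(F_S)$-equivariant. Consequently $\iota(t_S)^{-1}.\delta_S(\fii) = \delta_S(\iota(t_S)^{-1}.\fii)$, and by the very definition of the Borel action, $\iota(t_S)^{-1}.\fii$ is the translate of $\fii$ implicit in the $\mathcal{D}$-action on the second factor. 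The twist $\epsilon(t_\infty)$ appears once on each side and cancels, and the divisor $[0:1]-[1:0]$ plays no role because $\iota$ lands in the diagonal torus and both points $[0:1]$ and $[1:0]$ are fixed by $\iota(\mathbb{G}_m)$.

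Assembling the three matches gives $\Delta_{g,S}(t.\Phi)(x,\fii) = t.\Delta_{g,S}(\Phi)(x,\fii)$, as required. The only substantive input is the $B(F_S)$-equivariance of $\delta_S$, so there is essentially no obstacle; the argument is pure bookkeeping of the three factors of an idele once that lemma is in hand.
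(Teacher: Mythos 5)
Your proof is correct and fleshes out exactly the argument the paper compresses into the phrase ``immediate consequence of Lemma~\ref{equivariance}'': the $t^{S,\infty}$-component is absorbed into the argument of $\Phi$, the $t_\infty$-component contributes the sign $\epsilon(t_\infty)$ on both sides, and the $t_S$-component is moved onto $\delta_S(\fii)$ and converted to a translate of $\fii$ via the $B(F_S)$-equivariance of $\delta_S$ (the semi-local consequence of Lemma~\ref{equivariance}). Your remark that $[0:1]$ and $[1:0]$ are fixed by the diagonal torus is the correct reason the $\Div_0(\PP(F))$-slot causes no trouble, so nothing is missing.
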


Hence, the restriction map on group cohomology together with $\Delta_{g,S}$ induces a map of cohomology groups
\begin{align*}
 H^{i}(G(F),\Ah(K,S;N)(\epsilon)) &\too H^{i}(F^{\ast},\Ah(K,S;N)(\epsilon))\\
				  &\too H^{i}(F^{\ast},\mathcal{D}(U(g^{-1}Kg),S;N)(\epsilon))
\end{align*}
for every integer $i\geq 0$.
In particular, we get a map
\begin{align*}
 \mathcal{M}(K,S;N)^\epsilon\too H^{d-1}(F^{\ast},\mathcal{D}(U(g^{-1}Kg),S;N)(\epsilon)),
\end{align*}
which we will also denote by $\Delta_{g,S}$.

We want to discuss a special case of the above situation.
For a non-zero ideal $\m = \prod_\p \p^{m_\p}$ of $\OO$ we define an element $g_\m \in G(\A^\infty)$ by
\begin{align}\label{matrix}
 (g_\m)_\p = 	\begin{cases}
		 \begin{pmatrix} 1 & 0 \\ 0 & 1 \end{pmatrix} & \mbox{if $m_\p = 0$,} \\
                 \begin{pmatrix} \varpi_\p^{m_\p} & 1 \\ 0 & 1 \end{pmatrix} & \mbox{else,}
                \end{cases} 
\end{align}
where $\varpi_\p$ is a local uniformizer at $\p$.

If $\n = \prod_\p \p^{n_\p}$ is another non-zero ideal, we have $U(g_\m^S K_0(\n) (g^S_\m)\inv) = U(\m)^\infty$, where $(g_\m)^S$ is the projection of $g_\m$ on $G(\A\sinfty)$.
Therefore, we get maps
\begin{align*}
 \Delta_{\m,S}=\Delta_{(g_\m)^S,S}\colon \mathcal{M}(K_0(\n),S;N)^\epsilon \too H^{d-1}(F^\ast,\mathcal{D}(\m,S;N)(\epsilon))
\end{align*}
for all nonzero ideals $\m$ in $\OO$.
As always, we write $\Delta_{\mathfrak{m}}$ instead of $\Delta_{\mathfrak{m},\emptyset}$.
Let $$\mathcal{E} \colon H^{d-1}(F^{\ast},\mathcal{D}(\m,S,N)(\epsilon)) \too H^{d-1}(F^{\ast},\mathcal{D}(\m,N)(\epsilon))$$ be the map induced by the embedding
$$C_{c}^{0}(\I^{\infty}/U(\m)^{\infty},N)\too C_{c}^{0}(\I\sinfty/U(\m)\sinfty\times F_{S},N).$$
As in the local case, for a finite place $\mathfrak{p}\notin S$ the projection $\Pi_\m \colon \I/U(\m\p) \to \I/U(\m)$ yields maps
\begin{align*}
 (\Pi_\m)^\ast &\colon H^{d-1}(F^{\ast},\mathcal{D}(\m,S,N)(\epsilon)) \too H^{d-1}(F^{\ast},\mathcal{D}(\m\p,S,N)(\epsilon))
\intertext{and}
 (\Pi_\m)_\ast &\colon H^{d-1}(F^{\ast},\mathcal{D}(\m\p,S,N)(\epsilon)) \too H^{d-1}(F^{\ast},\mathcal{D}(\m,S,N)(\epsilon)).
\end{align*}
The following lemma is a direct consequence of the local Lemmas \ref{LocalHeckeLemma} and \ref{LocalDiagram}.

\begin{Lemma}\label{GlobalLemma} \renewcommand{\labelenumi}{(\roman{enumi})}
\begin{enumerate}
\item Let $\mathfrak{p}$ be a finite place of $F$, which is not contained in $S$.
Then, if $\p \nmid \m$, we have
\begin{align*}
 \Delta_{\m,S}(T_\p \kappa) &= \varpi_\p\inv. \Delta_{\m,S}(\kappa) + (\Pi_\m)_\ast(\Delta_{\m \p,S}(\kappa)) + \cf_\p(\n) \varpi_\p. \Delta_{\m,S}(\kappa)
\intertext{ and if $\mathfrak{p}\mid\m$, the equality}
 \Delta_{\m,S}(T_\p \kappa) &= (\Pi_\m)_\ast(\Delta_{\m \p,S}(\kappa))  + \cf_\p(\n) (\Pi_{\m\p\inv})^\ast(\Delta_{\m\p\inv,S}(\kappa))
\end{align*}
holds.
Here $\varpi_\p$ is an idele which is a local uniformizer at the $\p$-component and is equal to $1$ at all other places.

\item Assume that $\mathfrak{q}$ divides $\m$ for all $\mathfrak{q}\in S$.
Then the following diagram is commutative:
\begin{center}
 \begin{tikzpicture}
    \path 	(0,0) 	node[name=A]{$\mathcal{M}(K_0(\n),S;N)^\epsilon$}
		(6,0) 	node[name=B]{$\mathcal{M}(K_0(\n);N)^\epsilon$}
		(0,-2) 	node[name=C]{$H^{d-1}(F^{\ast},\mathcal{D}(\m,S:N)(\epsilon))$}
		(6,-2) 	node[name=D]{$H^{d-1}(F^{\ast},\mathcal{D}(\m;N)(\epsilon))$};
    \draw[->] (A) -- (B) node[midway, above]{$\Ev_{S}$};
    \draw[->] (A) -- (C) node[midway, left]{$\Delta_{\m,S}$};
    \draw[->] (B) -- (D) node[midway, right]{$\Delta_{\m}$};
    \draw[->] (C) -- (D) node[midway, below]{$\mathcal{E}$};
 \end{tikzpicture} 
\end{center}
(Remember that we assume that all $\mathfrak{q}\in S$ divide $\mathfrak{n}$ exactly once.)
\end{enumerate}
\end{Lemma}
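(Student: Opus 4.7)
Both parts follow by unpacking the definition of $\Delta_{g,S}$ so that the claimed global identities reduce to their local counterparts from Section \ref{LocalHecke}. The semi-local map $\delta_{S} = \otimes_{\mathfrak{q}\in S}\delta_{\mathfrak{q}}$ and the factorization \eqref{ident} of $\mathcal{D}(\widetilde{U},S;N)$ as a function space valued in distributions tell us that $\Delta_{\m,S}(\kappa)$ is, at the place $\p$, built from the same matrix configurations as $\partial_{m_{\p}}$.

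For part (i), I would fix $\kappa \in \mathcal{M}(K_{0}(\n),S;N)^{\epsilon}$ (a cocycle representative) and evaluate both sides at a pair $(x,\varphi)\in \I^{S,\infty}/U(\m)^{S,\infty}\times C^{0}_{c}(F_{S},\Z)$. By definition
$$\Delta_{\m,S}(\kappa)(x,\varphi) \;=\; \kappa\!\left(\iota(x)(g_{\m})^{S}\right)\!\bigl(\delta_{S}(\varphi)\bigr)([0:1]-[1:0]).$$
The matrices $\gamma_{0},\ldots,\gamma_{q}$ in $T_{\p}$ lie in $G(F_{\p})$, so in the product $\iota(x)(g_{\m})^{S}\gamma_{i}$ only the $\p$-component is affected. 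Since the $\p$-component of $(g_{\m})^{S}$ is exactly the matrix $\bigl(\begin{smallmatrix}\varpi_{\p}^{m_{\p}} & 1\\ 0 & 1\end{smallmatrix}\bigr)$ used to define $\partial_{m_{\p}}$, each summand of $T_{\p}$ contributes to $\Delta_{\m,S}(T_{\p}\kappa)(x,\varphi)$ precisely what the corresponding summand of the local Hecke operator contributes to $\partial_{m_{\p}}(T_{\p}\phi)$ in Lemma \ref{LocalHeckeLemma}, applied to the function $\phi$ encoding the $\p$-component. The local maps $(\pi_{m_{\p}})_{\ast}$ and $(\pi_{m_{\p}-1})^{\ast}$ correspond globally to $(\Pi_{\m})_{\ast}$ and $(\Pi_{\m\p^{-1}})^{\ast}$, because these maps act only on the $\p$-component of the idele. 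The two cases $r=0$ and $r>0$ of Lemma \ref{LocalHeckeLemma} thus translate, respectively, into the $\p\nmid \m$ and $\p\mid \m$ cases of the assertion, with the factor $\cf_{\p}(\n)$ matching on both sides.

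For part (ii), I would use that (up to the identification \eqref{ident}) the map $\Delta_{\m,S}$ factors as the tensor product over $\mathfrak{q}\in S$ of the dual maps $\delta_{\mathfrak{q}}^{\ast}$, composed with the semi-local analogue of $\eta_{m_{\mathfrak{q}}}$ at each $\mathfrak{q}$. Symmetrically, $\mathcal{E}\circ\Delta_{\m}\circ\Ev_{S}$ factors as the tensor product of the evaluations $\ev_{\mathfrak{q}}$ at $\fii_{\mathfrak{q}}=\cf_{\mathcal{O}_{\mathfrak{q}}}$, composed with $\partial_{m_{\mathfrak{q}}}$ at each $\mathfrak{q}$. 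The hypothesis that every $\mathfrak{q}\in S$ divides $\m$ ensures $m_{\mathfrak{q}}\geq 1$, so Lemma \ref{LocalDiagram} applies at each such place. Place-by-place commutativity, tensored using \eqref{product} and \eqref{product2}, gives the semi-local diagram at the level of modules; passing to group cohomology via restriction from $G(F)$ to $F^{\ast}$ then yields the claimed diagram.

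\textbf{Main obstacle.} No genuinely new mathematical input is needed beyond Section \ref{LocalHecke}; the whole difficulty is notational bookkeeping. One must carefully identify $\Ev_{S}$, $\mathcal{E}$, $(\Pi_{\m})_{\ast}$ and $(\Pi_{\m\p^{-1}})^{\ast}$ with the tensor products or factor-wise components of the corresponding local maps, verify compatibility with the $G(F)$- respectively $F^{\ast}$-action (so that Proposition \ref{equivariance2} applies), and keep track of the identification $\St_{\p}\cong C_{c}(F_{\p},\Z)$ supplied by $\delta_{\p}$ at each relevant prime.
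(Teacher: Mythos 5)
Your proposal is correct and takes essentially the same approach as the paper, which merely asserts that the lemma ``is a direct consequence of the local lemmas \ref{LocalHeckeLemma} and \ref{LocalDiagram}''; your write-up fills in exactly the reduction the paper has in mind. One notational slip worth fixing: in part (ii) the right-hand path through the square is $\Delta_{\m}\circ\Ev_{S}$, not $\mathcal{E}\circ\Delta_{\m}\circ\Ev_{S}$ (that composition does not type-check, since $\mathcal{E}$ sits on the other side of the square, applied to $\Delta_{\m,S}$), though the substance of your factorization argument is unaffected.
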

\subsection{Homology classes attached to idele class characters}
We give a slight variation of the main result of Section 3 of \cite{DS} on the vanishing of certain homology classes attached to idele class characters of $F$.
It is used in the next section to give lower bounds on the order of vanishing of Stickelberger elements attached to modular symbols.

Given a non-zero ideal $\mathfrak{m}\subseteq\mathcal{O}$ and a ring $R$ we define
\begin{align}\label{funcspac}
 \mathcal{C}_{\natural}(\mathfrak{m},R)^{\sharp}=C_{\natural}^{0}(\I^{\sharp}/U(\mathfrak{m})^{\sharp},R)
\end{align}
for $\natural\in\left\{\emptyset, c\right\}$, $\sharp\in\left\{\emptyset, \infty\right\}$.
Let us fix a locally constant character $\chi\colon\I\to R^{\ast}$.
For a place $v$ of $F$ we denote by $\chi_{v}$ the local component of $\chi$ at $v$, i.e.~the composition $\chi_{v}\colon F^{\ast}_{v} \hookrightarrow \I  \stackrel{\chi}{\longrightarrow} R^{\ast}$.
Since the kernel of $\chi$ is open, there exists a non-zero ideal $\mathfrak{m}\subseteq\mathcal{O}$ such that the ramification of $\chi$ is bounded by $\mathfrak{m}$, i.e.~$\chi$ restricted to $U(\mathfrak{m})$ is trivial.
Hence, we have a product decomposition $\chi((x_{v})_{v})=\prod_{v}\chi_{v}(x_{v})$.
In the following we always assume that $\chi$ is trivial on principal ideles.
Therefore, we can view $\chi$ as a character $\chi\colon \I/F^{\ast} U(\mathfrak{m})\to R^{\ast}$ or alternatively as an element of $H^{0}(F^{\ast},\mathcal{C}(\mathfrak{m},R))$.
For every idele $x\in \I$ we have the identity of homology classes
\begin{align}\label{idelemult}
 x.\chi =\chi(x^{-1})\cdot\chi \in H^{0}(F^{\ast},\mathcal{C}(\mathfrak{m},R)).
\end{align}

Next, we use a version of Poincare duality to attach a homology class to $\chi$.
By Dirichlet's unit theorem the group of totally positive global units $E_{+}$ is free of rank $d-1$.
It follows that the homology group $H_{d-1}(E_{+},\Z)$ is free of rank $1$.
Let us fix once and for all a generator $\eta$ of this group.
For example, every ordering of the Archimedean places of $F$ gives a canonical choice for $\eta$ (see Remark 2.1 of \cite{Sp2} for details).
Let $\mathcal{F}$ be a fundamental domain for the action of $F^{\ast}/E^{+}$ on $\I/U$.
The isomorphism
\begin{align*}
 \mathcal{C}_{c}(\mathcal{O}, \Z)=C_{c}(\I/U,\Z)= \cind_{U}^{\I}\Z \cong \cind_{E_{+}}^{F^{\ast}}C(\mathcal{F},\Z)
\end{align*}
of $F^{\ast}$-modules together with Shapiro's Lemma imply that
\begin{align}\label{class}
 H_{d-1}(E_{+},C(\mathcal{F},\Z))\cong H_{d-1}(F^{\ast}, \mathcal{C}_{c}(\mathcal{O},\Z)).
\end{align}
We define $\vartheta$ as the image of the cap-product $\cf_{\mathcal{F}}\cap \eta\in H_{d-1}(E_{+},C(\mathcal{F},\Z))$ under (\ref{class}).
The class $\vartheta$ is in fact $\I$-invariant.
The pairing
\begin{align*}
 \mathcal{C}(\mathfrak{m},R)\times\mathcal{C}_{c}(\mathcal{O},\Z)\too \mathcal{C}_{c}(\mathfrak{m},R),\ (\phi,\psi) \mapstoo \phi\cdot \psi
\end{align*}
induces a cap-product pairing
\begin{align*}
 \cap \colon H^{i}(F^{\ast},\mathcal{C}(\mathfrak{m},R))\times H_{j}(F^{\ast},\mathcal{C}_{c}(\mathcal{O}, \Z))\too H_{j-i}(F^{\ast},\mathcal{C}_{c}(\mathfrak{m},R))
\end{align*}
for $j\geq i\geq 0$.
As a special case, by taking the cap-product with $\vartheta$ we get a homomorphism of $R$-modules
\begin{align}\label{cap}
 H^{0}(F^{\ast},\mathcal{C}(\mathfrak{m},R))\stackrel{\cdot\ \cap\vartheta}{\longrightarrow} H_{d-1}(F^{\ast},\mathcal{C}_{c}(\mathfrak{m},R)).
\end{align}
It is $\I$-equivariant since $\vartheta$ is $\I$-invariant.

As in Section \ref{cohomology}, we fix locally constant characters $\epsilon_{v}\colon F_{v}^\ast \to \left\{\pm 1\right\}$ for all $v\in S_{\infty}$ and put $\epsilon=\prod_{v\in S_{\infty}}\epsilon_{v}$.
Tensoring the identity map of $\mathcal{C}_c(\m,R)^\infty$ with the $F_{\infty}^{\ast}$-equivariant homomorphism
\begin{align*}
 C(F_{\infty}^{\ast}/U_{\infty},\Z)\too \Z(\epsilon),\ 
 f\mapstoo \sum_{x\in F_{\infty}^{\ast}/U_{\infty}} \epsilon(x)f(x)
\end{align*}
yields an $\I$-equivariant $R$-linear map
\begin{align}\label{epsilon}
 \mathcal{C}_{c}(\mathfrak{m},R)\cong \mathcal{C}_{c}(\mathfrak{m},R)^{\infty} \otimes C(F_{\infty}^{\ast}/U_{\infty},\Z)\too \mathcal{C}_{c}(\mathfrak{m},R)^{\infty}(\epsilon).
\end{align}

Given a finite set $S$ of finite places of $F$ we define the following generalization of (\ref{funcspac}):
\begin{align*}
 \mathcal{C}_{c}(\mathfrak{m},S,R)^{\infty}=C_{c}(F_S \times I^{S,\infty}/U(\mathfrak{m})^{S,\infty},R)
\end{align*}
Extension by zero induces an $\I$-equivariant $R$-linear map
\begin{align}\label{extension}
 \mathcal{C}_{c}(\mathfrak{m},R)^{\infty}(\epsilon)\too \mathcal{C}_{c}(\mathfrak{m},S,R)^{\infty}(\epsilon).
\end{align}
Let $c_{\chi}=c_{\chi}(\mathfrak{m},S,\epsilon)$ denote the image of $\chi$ under the composition
\begin{align}\label{homclass}
\begin{split}
 H^{0}(F^{\ast},\mathcal{C}(\mathfrak{m},R)) &\stackrel{(\ref{cap})_{\color{white}\ast\color{black}}}{\too} H_{d-1}(F^{\ast},\mathcal{C}_{c}(\mathfrak{m},R))\\
					     &\stackrel{(\ref{epsilon})_{\ast}}{\too} H_{d-1}(F^{\ast}, \mathcal{C}_{c}(\mathfrak{m},R)^{\infty}(\epsilon))\\
					     &\stackrel{(\ref{extension})_{\ast}}{\too} H_{d-1}(F^{\ast}, \mathcal{C}_{c}(\mathfrak{m},S,R)^{\infty}(\epsilon)).
\end{split}
\end{align}
Equation (\ref{idelemult}) together with the fact that (\ref{homclass}) is $\I$-equivariant immediately implies that
\begin{align}\label{idelemult2}
 x.c_{\chi}=\chi(x^{-1})\cdot c_{\chi}
\end{align}
for every idele $x\in\I$.
The following proposition is essentially Proposition 3.8 of \cite{DS}.

\begin{Proposition}\label{vanish1}
Assume we have given ideals $\mathfrak{a}_{v}\subseteq R$ for $v\in S\cup S_{\infty}$ such that
\begin{enumerate}[(A)]
\item\label{A} for every $\q \in S$ we have $\chi_{\q}(x_{\q}) \equiv 1 \bmod \mathfrak{a}_{\q}$ for all $x_{\q}\in F_{\q}^{\ast}$,
\item for every $v\in S_{\infty}$ we have $\chi_{v}(-1)=-\epsilon_{v}(-1) \bmod \mathfrak{a}_{v}$,
\item $\prod_{v\in S\cup S_{\infty}}\mathfrak{a}_{v}=0$.
\end{enumerate}
Then the homology class $c_{\chi}$ vanishes.
\end{Proposition}

We finish this section with a few words on the proof of the proposition. To simplify matters we neglect the contribution from the Archimedean places. Let us fix ideals $\mathfrak{a}_\q\subseteq R$, $\q\in S$, such that property \eqref{A} holds. In addition, let $\mathfrak{a}\subseteq R$ be an ideal, which contains all the ideals $\mathfrak{a}_\q$. Thus, the character
\begin{align*}
\overline{\chi}\colon \I^{S} \stackrel{\chi^{S}}{\too} R^{\ast}\too \left(R/\mathfrak{a}\right)^{\ast}
\end{align*}
defines an element in $H^{0}(F^{\ast},\mathcal{C}(\mathfrak{m},R/\mathfrak{a})^{S})$, where
$$\mathcal{C}(\mathfrak{m},R/\mathfrak{a})^{S}=C^{0}(\I^{S}/U(\mathfrak{m})^{S},R/\mathfrak{a}).$$
Similarly as above, we define a fundamental class $\vartheta^{S}$ for the group of $S$-units in $F^{\ast}$ and put $\overline{c_\chi}=\overline{\chi}\cap \vartheta^{S}\in H_{d-1+\left|S\right|}(F^{\ast},\mathcal{C}_{c}(\mathfrak{m},R/\mathfrak{a})^{S})$.

By our assumptions, the maps
$$\dd\chi_\q\colon F^{\ast}_\q\too \mathfrak{a}_q/\mathfrak{a}\mathfrak{a}_q,\quad x\mapstoo \chi_{q}(x)-1 \bmod \mathfrak{a}\mathfrak{a}_q$$
are in fact homomorphisms for all $\q\in S$. Thus,
$$z_{\dd\chi_\q}(x)(y)= \cf_{x\OO_\q}(y)\cdot \dd\chi_\q(x)+ ((\cf_{\OO_\q}-\cf_{x\OO_\q})\cdot\dd\chi_\q)(y)$$
defines a $1$-cocyle on $F_{\p}^{\ast}$ with values in $C_{c}^{0}(F_\q,\mathfrak{a}_\q/\mathfrak{a}\mathfrak{a}_\q)$. We write $c_{\dd\chi_\q}$ for the associated cohomology class.

If we assume that that $\mathfrak{a}\cdot\prod_{\q\in s}\mathfrak{a}_\q=0$ holds, we have a well-defined multiplication map
$$R/\mathfrak{a}\times \prod_{\q\in S}\mathfrak{a}_\q/\mathfrak{a}\mathfrak{a}_q\too \prod_{\q\in S}\mathfrak{a}_q,$$
which induces a homomorphism
$$\bigotimes_{\q\in S}C_{c}^{0}(F_\q,\mathfrak{a}_\q/\mathfrak{a}\mathfrak{a}_\q) \otimes \mathcal{C}_{c}(\mathfrak{m},R/\mathfrak{a})^{S}\too \mathcal{C}_{c}(\mathfrak{m},S,\prod_{\q\in S}\mathfrak{a}_\q).$$
A mostly formal computation in cohomology then yields the equality
$$c_\chi=\pm (c_{\dd\chi_{\q_1}}\cup\ldots\cup c_{\dd\chi_{\q_r}})\cap \overline{c_\chi}$$
with $S=\{\q_1,\ldots,\q_r\}$.
Specialising to the case $\mathfrak{a}=R$ we obtain the claim.

\subsection{Stickelberger elements attached to modular symbols}\label{Stick}
We define Stickelberger elements attached to modular symbols and bound their order of vanishing from below. 

Throughout this section we fix a ring $R$, an $R$-module $N$, a non-zero ideal $\mathfrak{n}\subseteq\mathcal{O}$, a character $\epsilon$ as in Section \ref{cohomology} and a modular symbol $\kappa\in \mathcal{M}(K_{0}(\mathfrak{n}),N)^{\epsilon}$.
Further, we fix a finite abelian extension $L/F$ with Galois group $\mathcal{G}=\mathcal{G}_{L/F}$ and denote by $\rec=\rec_{L/F} \colon \I \to \mathcal{G} \subseteq \Z[\mathcal{G}]^{\ast}$ the Artin reciprocity map.
For a finite place $\mathfrak{p}$ of $F$, which is unramified in $L$, we let $\sigma_{\mathfrak{p}}\in\mathcal{G}$ be the (arithmetic) Frobenius element at $\p$.
More generally, if $\mathfrak{a}=\prod_{\p}\p^{a_\p}\subseteq \mathcal{O}$ is a (non-empty) product of powers of prime ideals, which are unramified in $L$, we put $\sigma_{\mathfrak{a}}=\prod_{\p}\sigma_{\p}^{a_\p}$.

Let $\mathfrak{m}\subseteq\mathcal{O}$ be a non-zero ideal such that the ramification of $L/F$ is bounded by $\m$, i.e.~the Artin reciprocity map is trivial on $U(\mathfrak{m})$. 
Then we can identify the reciprocity map with a cohomology class in $H^{0}(F^{\ast},\mathcal{C}(\mathfrak{m},\Z[\mathcal{G}]))$.
Let $\rho_{L/F}=\rho_{L/F,\mathfrak{m}}\in H_{d-1}(F^{\ast},\mathcal{C}_c(\mathfrak{m},\Z[\mathcal{G}])^\infty(\epsilon))$ be the image of this cohomology class under (\ref{homclass}) with $S=\emptyset$.
The natural pairing
\begin{align*}
 \mathcal{C}_{c}(\mathfrak{m},\Z[\G])^{\infty}\times \mathcal{D}(\mathfrak{m};N)\too \Z[\G]\otimes N
\end{align*}
induces a cap-product pairing
\begin{align*}
 H_{d-1}(F^{\ast},\mathcal{C}_{c}(\mathfrak{m},\Z[\G])^{\infty}(\epsilon))\times H^{d-1}(F^{\ast},\mathcal{D}(\mathfrak{m};N)(\epsilon))\too \Z[\G]\otimes N.
\end{align*}
By construction we have
\begin{align}\label{equivpair}
 c \cap x.b = x\inv. c \cap b
\end{align}
for all $x\in\I$, $c \in H_{d-1}(F^{\ast},\mathcal{C}_{c}(\mathfrak{m},\Z[\G])^{\infty}(\epsilon))$ and $b \in H^{d-1}(F^{\ast},\mathcal{D}(\mathfrak{m};N)(\epsilon))$.

\begin{Def}\label{Stickelberger}
The Stickelberger element of modulus $\mathfrak{m}$ associated to $\kappa$ and $L/K$ is defined as the cap-product
\begin{align*}
\Theta_{\mathfrak{m}}(L/F,\kappa)=\rho_{L/F} \cap \Delta_{\mathfrak{m}}(\kappa) \in \Z[\G]\otimes N.
\end{align*}
\end{Def}

Let $k$ be a field, which is an $R$-algebra.
A character $\chi\colon \mathcal{G}\to k^{\ast}$ induces a homomorphism $R[\mathcal{G}]\to k$ of $R$-algebras and hence a homomorphism of $R$-modules $$\chi\colon \Z[\G]\otimes N \too k\otimes_{R}N.$$
We can also consider $\chi$ as a character on $\I$ via the reciprocity map.
We write $\chi_{\infty}\colon F_{\infty}^{\ast}\to \mu_{2}(k)$ for the component at infinity of $\chi$.
The Stickelberger elements satisfy the following compatibility relations:

\begin{Proposition}\label{compatibility} \renewcommand{\labelenumi}{(\roman{enumi})}
\begin{enumerate}
\item Let $L^\prime$ be an intermediate extension of $L/F$. Then we have
\begin{align*}
 \pi_{L/L^\prime}(\Theta_{\mathfrak{m}}(L/F,\kappa))=\Theta_{\mathfrak{m}}(L^\prime/F,\kappa),
\end{align*}
where
\begin{align*}
 \pi_{L/L^{\prime}}\colon \Z[\mathcal{G}_{L/F}]\otimes N\too \Z[\mathcal{G}_{L^\prime/F}]\otimes N
\end{align*}
is the canonical projection.
\item Let $\mathfrak{p}$ be a finite place of $F$, which does not divide $\mathfrak{m}$.
Assume that $\kappa$ is an eigenvector of $T_\mathfrak{p}$ with eigenvalue $\lambda_{\mathfrak{p}}$.
Then the following equality holds:
\begin{align*}
 \Theta_{\mathfrak{m}\mathfrak{p}}(L/F,\kappa)
 =(\lambda_{\mathfrak{p}}-\sigma_{\mathfrak{p}}\inv-\cf_{\p}(\n)\sigma_{\mathfrak{p}})\Theta_{\mathfrak{m}}(L/F,\kappa)
\end{align*}
\item Let $\mathfrak{p}$ be a finite place of $F$, which does divide $\mathfrak{m}$ and write $r=\ord_\p(\m)$.
Assume that $\kappa$ is an eigenvector of $T_\mathfrak{p}$ with eigenvalue $\lambda_{\mathfrak{p}}$.
Then we have a decomposition
\begin{align*}
 \Theta_{\mathfrak{m}\mathfrak{p}}(L/F,\kappa) =\lambda_{\mathfrak{p}}\Theta_{\mathfrak{m}}(L/F,\kappa)\ +\ \cf_{\p}(\n) v_{\mathfrak{m}}(\Theta_{\m\p\inv}(L/F,\kappa)),
\end{align*}
where the elements $v_{\mathfrak{m}}(\Theta_{\m\p\inv}(L/F,\kappa))$ satisfy the following properties:
\begin{itemize}
 \item $\pi_{L/L^{\prime}}(v_{\mathfrak{m}}(\Theta_{\m\p\inv}(L/F,\kappa)))=v_{\mathfrak{m}}(\Theta_{\m\p\inv}(L^{\prime}/F,\kappa))$ for all intermediate extensions $L^{\prime}$ of $L/F$
 \item $v_{\mathfrak{m}}(\Theta_{\m\p\inv}(L/F,\kappa))= [U_{\p}^{(r-1)}:U_{\p}^{(r)}](\Theta_{\m\p\inv}(L/F,\kappa))$ if the Artin reciprocity map for $L/F$ is trivial on $U(\m\p\inv)$
 \item Let $k$ be a field, which is an $R$-algebra, and $\chi\colon \mathcal{G}\to k^{\ast}$ a character such that $\chi_{\p}$ has conductor $\mathfrak{p}^{r}$. Then:
\begin{align*}
 \chi(v_{\mathfrak{m}}(\Theta_{\m\p\inv}(L/F,\kappa)))=0
\end{align*}
\end{itemize}
\item Suppose $k$ is a field, which is an $R$-algebra, and $\chi\colon \mathcal{G}\to k^{\ast}$ is a character. If $\chr(k)\neq 2$ and $\chi_{\infty}\neq \epsilon$, we have
\begin{align*}
 \chi(\Theta_{\mathfrak{m}}(L/F,\kappa))=0.
\end{align*}
\end{enumerate}
\end{Proposition}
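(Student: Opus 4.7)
The plan is to derive all four assertions from the defining formula $\Theta_{\m}(L/F,\kappa)=\rho_{L/F,\m}\cap\Delta_{\m}(\kappa)$ by combining three tools: the Hecke relations of Lemma \ref{GlobalLemma}(i); a projection formula for the cap product, namely $\rho\cap(\Pi_{\m})^{\ast}\beta=(\Pi_{\m})_{\ast}\rho\cap\beta$ and its dual, used together with the compatibility $\rho_{L/F,\m}=\Pi_{\m^{\prime}}^{\ast}(\rho_{L/F,\m^{\prime}})$ which holds whenever $\m^{\prime}\mid\m$ also bounds the ramification of $L/F$ (this is immediate from \eqref{homclass} since both sides are built from the same character and from the pullback of $\vartheta$); and the equivariance \eqref{equivpair} combined with \eqref{idelemult2}, which turns the action of an idele $x$ on $\rho_{L/F,\m}$ into multiplication by $\rec_{L/F}(x^{-1})$.

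Part (i) is immediate from the naturality of \eqref{homclass}: the identity $\pi_{L/L^{\prime}}\circ\rec_{L/F}=\rec_{L^{\prime}/F}$ implies $\pi_{L/L^{\prime}}(\rho_{L/F,\m})=\rho_{L^{\prime}/F,\m}$, and the cap product is functorial in the coefficients of $\rho$.

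For (ii), Lemma \ref{GlobalLemma}(i) in the case $\p\nmid\m$, together with $T_{\p}\kappa=\lambda_{\p}\kappa$, gives
\begin{align*}
(\Pi_{\m})_{\ast}(\Delta_{\m\p}(\kappa))=(\lambda_{\p}-\varpi_{\p}^{-1}-\cf_{\p}(\n)\varpi_{\p}).\Delta_{\m}(\kappa).
\end{align*}
Capping both sides with $\rho_{L/F,\m}$ and applying the projection formula on the left turns the left-hand side into $\Theta_{\m\p}(L/F,\kappa)$; on the right, \eqref{equivpair} transfers the actions of $\varpi_{\p}^{\pm 1}$ onto $\rho_{L/F,\m}$, which become $\sigma_{\p}^{\mp 1}$ by \eqref{idelemult2} and $\rec_{L/F}(\varpi_{\p})=\sigma_{\p}$, giving the claimed identity. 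Part (iii) is completely analogous, starting from the $\p\mid\m$ formula of Lemma \ref{GlobalLemma}(i): capping with $\rho_{L/F,\m}$ produces the stated decomposition of $\Theta_{\m\p}(L/F,\kappa)$, in which $v_{\m}(\Theta_{\m\p\inv}(L/F,\kappa))$ appears as (a signed multiple of) the cap product $\rho_{L/F,\m}\cap(\Pi_{\m\p\inv})^{\ast}(\Delta_{\m\p\inv}(\kappa))$. The three listed properties of $v_{\m}$ then follow: compatibility with the projection maps $\pi_{L/L^{\prime}}$ is argued as in (i); in the tame case (when $\rec_{L/F}$ is already trivial on $U(\m\p\inv)$), the adjunction together with $\rho_{L/F,\m}=\Pi_{\m\p\inv}^{\ast}(\rho_{L/F,\m\p\inv})$ identifies $(\Pi_{\m\p\inv})_{\ast}(\rho_{L/F,\m})$ with $[U_{\p}^{(r-1)}:U_{\p}^{(r)}]\rho_{L/F,\m\p\inv}$, since pushforward of a pullback multiplies by the fiber size; and in the wild case, applying a character $\chi$ of conductor $\p^{r}$ at $\p$ reduces the quantity to a sum of $\chi_{\p}$ over the non-trivial coset space $U_{\p}^{(r-1)}/U_{\p}^{(r)}$, which vanishes by orthogonality of characters.

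Part (iv) follows directly from Proposition \ref{vanish1}. One observes that $\chi(\Theta_{\m}(L/F,\kappa))=\chi(\rho_{L/F,\m})\cap\Delta_{\m}(\kappa)$ and that $\chi(\rho_{L/F,\m})$ is precisely the homology class $c_{\chi\circ\rec}(\m,\emptyset,\epsilon)$ of \eqref{homclass}. Under the assumptions $\chr(k)\neq 2$ and $\chi_{\infty}\neq\epsilon$, there is some Archimedean place $v_{0}$ at which $\chi_{v_{0}}(-1)=-\epsilon_{v_{0}}(-1)$ in $\{\pm 1\}\subset k^{\ast}$; taking $\mathfrak{a}_{v_{0}}=0$ and $\mathfrak{a}_{v}=k$ at all other infinite places satisfies the hypotheses of Proposition \ref{vanish1} with $S=\emptyset$, so $c_{\chi\circ\rec}=0$ and $\chi(\Theta_{\m}(L/F,\kappa))=0$. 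I expect the main technical nuisance to lie in the sign-bookkeeping in part (iii), i.e.~fixing the correct definition of $v_{\m}$ so that the tame-case scalar comes out with the positive sign asserted in the statement.
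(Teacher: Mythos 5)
Your proposal is correct and follows essentially the same route as the paper's own proof: naturality of the construction for (i), the Hecke relations of Lemma \ref{GlobalLemma}(i) combined with the cap-product adjunction and the equivariance \eqref{equivpair}/\eqref{idelemult2} for (ii) and (iii), and the vanishing criterion of Proposition \ref{vanish1} (which the paper invokes as ``orthogonality of characters'') for (iv). Your observation about the sign bookkeeping for $v_{\m}$ in (iii) is apt; the paper's proof also produces $v_{\m}(\Theta_{\m\p\inv}(L/F,\kappa))$ as $\pm(\Pi_{\m\p\inv})_{\ast}(\rho_{L/F,\m})\cap\Delta_{\m\p\inv}(\kappa)$, and the tame-case scalar is exactly the fiber-count $[U_{\p}^{(r-1)}:U_{\p}^{(r)}]$ you describe.
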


\begin{proof}
(i) By global class field theory the diagram
\begin{center}
  \begin{tikzpicture}
    \path 	(0,0) 	node[name=A]{$\I/U(\mathfrak{m})$}
		(3,0) 	node[name=B]{$\mathcal{G}_{L/F}$}
		(5,0)   node[name=E]{$\Z[\mathcal{G}_{L/F}]$}
		(0,-2) 	node[name=C]{$\I/U(\mathfrak{m})$}
		(3,-2) 	node[name=D]{$\mathcal{G}_{L^{\prime}/F}$}
		(5,-2)  node[name=F]{$\Z[\mathcal{G}_{L^{\prime}/F}]$};
    \draw[->] (A) -- (B) node[midway, above]{$\rec_{L/F}$};
    \draw[->] (A) -- (C) node[midway, left]{=};
    \draw[->] (B) -- (D) node[midway, right]{$\pi_{L/L^{\prime}}$};
    \draw[->] (C) -- (D) node[midway, below]{$\rec_{L{^\prime}/F}$};
		\draw[->] (E) -- (F) node[midway, right]{$\pi_{L/L^{\prime}}$};
		\draw[right hook->] (B) -- (E);
		\draw[right hook->] (D) -- (F);
  \end{tikzpicture} 
  \end{center}
is commutative and thus, the claim follows.\\
(ii) By assumption and Lemma \ref{GlobalLemma} we have
\begin{align*}
 \lambda_{p}\Theta_{\mathfrak{m}}(L/F,\kappa) &=\rho_{L/F,\mathfrak{m}} \cap \Delta_{\mathfrak{m}}(T_{\mathfrak{p}}\kappa) \\
					      &=\rho_{L/F,\mathfrak{m}} \cap (\varpi_{\p}\inv.\Delta_{\mathfrak{m}}(\kappa) + \cf_{\p}(\n)\varpi_{\p}.\Delta_{\mathfrak{m}}(\kappa) + (\Pi_\m)_\ast(\Delta_{\m\p}(\kappa))).
\end{align*}
Equations \eqref{idelemult2} and \eqref{equivpair} imply
\begin{align*}
\rho_{L/F,\mathfrak{m}}\cap \varpi_{\p}\inv.\Delta_{\mathfrak{m}}(\kappa) &= \varpi_{\p}.\rho_{L/F,\mathfrak{m}}\cap \Delta_{\mathfrak{m}}(\kappa) \\
									  &= \rec(\varpi_{\p}\inv) \rho_{L/F,\mathfrak{m}}\cap \Delta_{\mathfrak{m}}(\kappa) \\
									  &= \sigma_{\p}^{-1} \Theta_{\mathfrak{m}}(L/F,\kappa)
\end{align*}
for the first summand. The second summand is dealt with similarly.
Finally, for the third summand the following equality holds:
\begin{align*}
\rho_{L/F,\mathfrak{m}}\cap (\Pi_\m)_\ast(\Delta_{\m\p}(\kappa)) &= (\Pi_\m)^\ast \rho_{L/F,\mathfrak{m}}\cap (\Delta_{\m\p}(\kappa)) \\
								 &=\rho_{L/F,\m\p} \cap (\Delta_{\m\p}(\kappa)) \\
								 &=\Theta_{\mathfrak{m}\mathfrak{p}}(L/F,\kappa)
\end{align*}\\
(iii) As before, Lemma \ref{GlobalLemma} implies
\begin{align*}
 \lambda_{p}\Theta_{\mathfrak{m}}(L/F,\kappa)&=\rho_{L/F,\mathfrak{m}} ((\Pi_\m)_\ast(\Delta_{\m \p,S}(\kappa))  + \cf_\p(\n) (\Pi_{\m\p\inv})^\ast(\Delta_{\m\p\inv,S}(\kappa))).
\end{align*}
The first summand was already dealt with in part (ii). For the second summand we get
\begin{align*}
 \rho_{L/F,\mathfrak{m}}\cap(\Pi_{\m\p\inv})^\ast(\Delta_{\m\p\inv,S}(\kappa))= (\Pi_{\m\p\inv})_\ast(\rho_{L/F,\mathfrak{m}})\cap \Delta_{\m\p\inv,S}(\kappa).
\end{align*}
The listed properties of $v_{\mathfrak{m}}(\Theta_{\m\p\inv}(L^{\prime}/F,\kappa))$ now follow as in (i), respectively by using orthogonality of characters.\\
(iv) We have
\begin{align*}
 \chi(\Theta_{\mathfrak{m}}(L/F,\kappa)) &=\chi(\rho_{L/F})\cap \Delta_{\m}(\kappa) \\
					 &=c_{\chi}(\m,\epsilon)\cap \Delta_{\m}(\kappa).
\end{align*}
Using orthogonality of characters one sees that $c_{\chi}(\m,\epsilon)$ vanishes unless $\chi=\epsilon$.
\end{proof}

\begin{Remark}\label{padic}\renewcommand{\labelenumi}{(\roman{enumi})}
\begin{enumerate}
\item Let us assume that $R$ is a subring of $\C$ and $N\subseteq \C$ is an $R$-submodule.
Since the set of characters is a basis of the dual of $\C[\mathcal{G}]$, it follows that an element $\Theta\in \Z[\mathcal{G}]\otimes N$ is uniquely determined by the values $\chi(\Theta)$ for $\chi$ ranging through all complex-valued characters of $\mathcal{G}$.
Now assume that $\kappa\in \mathcal{M}(K_{0}(\mathfrak{n}),N)^{\epsilon}$ is an eigenvector for all Hecke operators $T_{\mathfrak{p}}$.
Then by the previous proposition the Stickelberger elements $\Theta_{\mathfrak{m}}(L/F,\kappa)$ (for varying $\mathfrak{m}$ and $L$) are uniquely determined by the $T_{\mathfrak{p}}$-eigenvalues and the complex numbers $\chi(\Theta_{\mathfrak{m}}(L/F,\kappa))$, where $\chi$ ranges through all primitive characters of conductor $\mathfrak{m}$ such that $\chi_\infty=\epsilon$.
\item Using the above Proposition one can construct $p$-adic $L$-functions of ``ordinary'' modular symbols: Let $p$ be a rational prime and let $R$ be the valuation ring of a $p$-adic field.
Let $\n\subset \mathcal{O}$ be a non-zero ideal such that every place $\p$ lying over $p$ divides $\n$.
Further, let $\kappa\in \mathcal{M}(K_{0}(\mathfrak{n}),R)^{\epsilon}$ be an eigenvector of $T_\p$ for all places $\p$ of $F$ dividing $p$ with eigenvalue $\lambda_{p}\in R^{\ast}$.
For every ideal $\m\subset \mathcal{O}$ of the form $\m=\prod_{\p\mid p}\p^{m_\p}$ with $m_\p\geq 1$ for all $\p$ we define 
$$\widetilde{\Theta}_{\m}(\cdot/F,\kappa)=\left(\prod_{\p \mid p} \lambda_{p}^{-m_\p}\right) {\Theta}_{\m}(\cdot/F,\kappa).$$
Then by Proposition \ref{compatibility} (iii) this is a norm-compatible family and hence defines an element $\widetilde{\Theta}(F,\kappa)$ in the completed group ring $R[[\Gamma]]$, where $\Gamma$ is the Galois group of the maximal abelian extension of $F$ unramified outside $p$ and $\infty$.
For $s\in \Z_{p}$ and $\gamma \in \Gamma$ we put $\left\langle \gamma\right\rangle^{s}=\exp_{p}(s\ \log_{p}(\chi_{\cyc})(\gamma))$, where $\chi_{\cyc}\colon \Gamma\to \Z_{p}^{\ast}$ is the cyclotomic character.
The $p$-adic $L$-function of $\kappa$ is the locally analytic function 
$$L_{p}(s,\kappa)=\left\langle \cdot\right\rangle^{s}\left(\widetilde{\Theta}(F,\kappa)\right).$$ 
\end{enumerate}
\end{Remark}
 
Let $S$ be a finite set of finite places of $F$, such that $\q$ divides $\mathfrak{n}$ exactly once for all $\q \in S$.
We write $S_{\mathfrak{m}} = \left\{ \q \in S \mbox{ s.t. } \q \mid \mathfrak{m} \right\}$.
For a place $v$ of $F$ we let $\mathcal{G}_{v}\subseteq \mathcal{G}$ be the decomposition group at $v$.
If $\q \in S$, we define $I_{\q} \subseteq \Z[\mathcal{G}]$ as the kernel of the projection $\Z[\mathcal{G}]\onto \Z[\mathcal{G}/\mathcal{G}_{\q}]$.
If $v\in S_{\infty}$, we let $\sigma_{v}$ be a generator of $\mathcal{G}_{v}$ and define $I_{v}^{\pm 1}\subseteq \Z[\mathcal{G}]$ as the ideal generated by $\sigma_{v}\mp 1$.

\begin{Proposition}\label{vanish2}
Assume that there exists an $S$-special modular symbol $\kappa'\in \mathcal{M}(K_{0}(\mathfrak{n}),S,N)^\epsilon$ which lifts $\kappa$, i.e. $\Ev_S(\kappa')=\kappa$ and that $N$ is $\Z$-flat.
Then:
\begin{align*}
 \Theta_{\mathfrak{m}}(L/F,\kappa)\in \left(\prod_{v\in S_{\infty}} I_{v}^{-\epsilon_{v}(-1)} \cdot \prod_{\q \in S_\mathfrak{m}} I_{\q}\right) \otimes N
\end{align*}
In particular, if $L/F$ is totally real, $\epsilon$ is the trivial character and $N=R$, we have
\begin{align*}
 2^{-d}\Theta_{\mathfrak{m}}(L/F,\kappa)&\in R[\mathcal{G}] \intertext{and} \ord_{R}(2^{-d}\Theta_{\mathfrak{m}}(L/F,\kappa))&\geq \left|S_{\m}\right|
\end{align*}
with $d=[F:\Q]$.
\end{Proposition}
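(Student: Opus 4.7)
The plan is to reduce modulo the ideal $J = \prod_{v \in S_\infty} I_v^{-\epsilon_v(-1)} \cdot \prod_{\p \in S_\m} I_\p \subseteq \Z[\G]$ and apply Proposition \ref{vanish1} to the reciprocity class, viewed as a character $\rec_{L/F} \bmod J \colon \I \to (\Z[\G]/J)^{\ast}$. The hypothesis of an $S$-special lift is essential because it allows us to realize $\Theta_\m(L/F, \kappa)$ as a cap product in the $S_\m$-special setting, which is precisely the setting where Proposition \ref{vanish1} gives the correct ideal.

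First, evaluate the Steinberg components at places in $S \setminus S_\m$: setting $\kappa'' := \Ev_{S \setminus S_\m}(\kappa')$ yields a lift in $\mathcal{M}(K_0(\n), S_\m; N)^\epsilon$ with $\Ev_{S_\m}(\kappa'') = \Ev_S(\kappa') = \kappa$. Next, the construction of Section \ref{Stick} extends verbatim to the $S_\m$-special setting: the natural pairing
\begin{align*}
\mathcal{C}_c(\m, S_\m, \Z[\G])^\infty \times \mathcal{D}(\m, S_\m; N) \too \Z[\G] \otimes N
\end{align*}
induces a cap product, and we define $\Theta_{\m, S_\m}(L/F, \kappa'') := c_{\rec_{L/F}}(\m, S_\m, \epsilon) \cap \Delta_{\m, S_\m}(\kappa'')$. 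Lemma \ref{GlobalLemma}(ii), together with the adjointness of the extension-by-zero map \eqref{extension} and the restriction-of-distributions map $\mathcal{E}$ (the defining property of $\mathcal{E}$: $D(\mathrm{ext}(f)) = \mathcal{E}(D)(f)$), shows that $\Theta_{\m, S_\m}(L/F, \kappa'') = \Theta_\m(L/F, \kappa)$ in $\Z[\G] \otimes N$.

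Now apply Proposition \ref{vanish1} in $R = \Z[\G]/J$ with $\mathfrak{a}_\p = I_\p \cdot R$ for $\p \in S_\m$ and $\mathfrak{a}_v = I_v^{-\epsilon_v(-1)} \cdot R$ for $v \in S_\infty$. The required local conditions are immediate: for $\p \in S_\m$, $\rec_{L/F, \p}$ takes values in $\G_\p$, so $\rec_{L/F, \p}(x) - 1 \in I_\p$; for $v \in S_\infty$, a short case analysis on whether $v$ is ramified in $L$ verifies $\rec_{L/F, v}(-1) + \epsilon_v(-1) \in I_v^{-\epsilon_v(-1)}$ (when $v$ is unramified, $\sigma_v = 1$ and $I_v^{-\epsilon_v(-1)}$ contains $1 + \epsilon_v(-1) \in \{0, 2\}$; when $v$ is ramified, the containment $\sigma_v + \epsilon_v(-1) \in (\sigma_v + \epsilon_v(-1))$ is tautological); and $\prod_v \mathfrak{a}_v = 0$ in $R$ by construction of $J$. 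Consequently $c_{\rec_{L/F} \bmod J}(\m, S_\m, \epsilon) = 0$, and by naturality of the cap product in the coefficient ring together with $\Z$-flatness of $N$ (which preserves exactness of $0 \to J \otimes N \to \Z[\G] \otimes N \to R \otimes N \to 0$), this vanishing lifts to $\Theta_\m(L/F, \kappa) \in J \otimes N$.

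The principal obstacle lies in establishing the compatibility $\Theta_{\m, S_\m}(L/F, \kappa'') = \Theta_\m(L/F, \kappa)$: one must trace the adjointness of $\mathcal{E}$ and \eqref{extension} through the cap-product pairings. Once this is in hand, the corollary is straightforward: for $L/F$ totally real and $\epsilon$ trivial, every Archimedean decomposition group $\G_v$ is trivial so that $\sigma_v = 1$ and $I_v^{-1} = (2)$; therefore $J = (2^d) \cdot \prod_{\p \in S_\m} I_\p$, and dividing by $2^d$ yields $2^{-d} \Theta_\m(L/F, \kappa) \in \prod_{\p \in S_\m} I_\p \otimes R \subseteq I_R(\G)^{|S_\m|}$, since each $I_\p$ is contained in the augmentation ideal of $\Z[\G]$.
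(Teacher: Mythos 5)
Your proof is correct and follows essentially the same strategy as the paper's: reduce to the $S_\m$-special setting by evaluating away $S\setminus S_\m$, rewrite $\Theta_\m(L/F,\kappa)$ as a cap product against $\Delta_{\m,S_\m}(\kappa'')$ via Lemma \ref{GlobalLemma}(ii), pass to coefficients in $\Z[\G]/J$ where Proposition \ref{vanish1} forces the pushed-forward homology class to vanish, and recover $\Theta_\m(L/F,\kappa)\in J\otimes N$ from $\Z$-flatness of $N$. Your writeup is somewhat more explicit than the paper's in spelling out the adjointness between $\mathcal{E}$ and the extension-by-zero map \eqref{extension} and in verifying the local hypotheses of Proposition \ref{vanish1}, but these are exactly the steps the paper treats as implicit rather than different ideas.
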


\begin{proof}
Let $\kappa'$ be an $S$-special lift of $\kappa$.
Evaluating $\kappa'$ at $\otimes_{\p \in S\setminus S_{\mathfrak{m}}}\fii_{\p}$ gives an $S_{\mathfrak{m}}$-special modular symbol, which lifts $\kappa$.
Hence, we may assume that $S=S_{\mathfrak{m}}$.
Let us write $\rho_{L/F}(S)\in H_{d-1}(F^\ast,\mathcal{C}_c(\mathfrak{m},S,\Z[\mathcal{G}])^\infty(\epsilon))$ for the image of $\rho_{L/F}$ under (\ref{extension}).
The equality
\begin{align*}
 \Theta_{\mathfrak{m}}(L/F,\kappa)=\rho_{L/F}\cap \Delta_{\mathfrak{m}}(\kappa)=\rho_{L/F}(S)\cap\Delta_{\mathfrak{m},S}(\kappa')
\end{align*}
follows by Lemma \ref{GlobalLemma} (ii).
We put $A=\quot{\Z[\G]}{\prod_{v\in S_{\infty}} I_{v}^{-\epsilon_{v}(-1)} \cdot \prod_{\q \in S} I_{\q}}$ and let $\pi\colon \Z[\G]\to A$ be the projection map (resp.~the projection map $\pi\colon \Z[\G]\otimes N \to A\otimes N$).
Then we have
\begin{align*}
 \pi (\Theta_{\mathfrak{m}}(L/F,\kappa))=\pi_{\ast}(\rho_{L/F}(S))\cap \Delta_{\mathfrak{m},S}(\kappa')=0
\end{align*}
since by Proposition \ref{vanish1} the homology class $\pi_{\ast}(\rho_{L/F}(S))=c_{\pi\circ \rec}(\mathfrak{m},S,\epsilon)$ vanishes.
\end{proof}

There is a natural involution on the group algebra
\begin{align*}
 \Z[\mathcal{G}]\too \Z[\mathcal{G}],\ \Theta\mapstoo \Theta^{\vee},
\end{align*}
which is induced by the the inversion map
\begin{align*}
 \mathcal{G}\too\mathcal{G},\ g\mapstoo g^{-1}.
\end{align*}
In certain situations, Stickelberger elements satisfy functional equations with respect to this involution.

\begin{Proposition}\label{funceq} \renewcommand{\labelenumi}{(\roman{enumi})}
Assume that $\n$ can be decomposed as a product $\n_1\cdot \n_2$ of coprime ideals such that $\n_1$ and $\m$ are coprime and $\n_{2}\mid\m$.
Write $\n_{1}=\prod_{i=1}^{r}\p_{i}^{n_i}$ as a product over powers of prime ideals with $n_{i}\geq 1$ for all $1\leq i\leq r$.
If $\kappa$ is an eigenvector of the Atkin-Lehner involutions $W_{\p_i^{n_i}}$, $1\leq i \leq r$, with eigenvalues $\varepsilon_{\p_i}\in \left\{\pm 1\right\}$, we have
$$\Theta_{\mathfrak{m}}(L/F,\kappa)^\vee=(-1)^{d}\epsilon(-1)\cdot\varepsilon_{\n_1}\cdot\sigma_{\n_1}^{-1}\cdot\Theta_{\mathfrak{m}}(L/F,\kappa),$$
where $\varepsilon_{n_{1}}=\prod_{i=1}^{r}\varepsilon_{\p_i}$ is the eigenvalue of the Atkin-Lehner involution $W_{\n_{1}}$. 
\end{Proposition}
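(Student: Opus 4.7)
The functional equation should come from exploiting the fact that the matrix $w := \begin{pmatrix}0&1\\1&0\end{pmatrix}$ lies in $G(F)$. Since $G(F)$ acts trivially on its own cohomology, we have $w.\kappa = \kappa$. However, conjugation by $w$ on the torus $\iota(F^{\ast})\subset G(F)$ acts as inversion in $\PGL_{2}$, and under the reciprocity map, inversion on $\I$ corresponds precisely to the involution $\vee$ on $\Z[\mathcal{G}]$. The plan is therefore to compute the induced action of $w$ on $\Delta_{\m}(\kappa)\in H^{d-1}(F^{\ast},\mathcal{D}(\m,N)(\epsilon))$ place by place using Lemma \ref{localfuneq}, cap with $\rho_{L/F}$, and interpret the resulting equality $\Delta_{\m}(\kappa)=w.\Delta_{\m}(\kappa)$ (which follows from $w.\kappa=\kappa$ upon restriction to $F^{\ast}\subset G(F)$) as the desired functional equation.

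Local decomposition. At a finite place $\p\mid\n_{1}$ (which is coprime to $\m$), the map $\Delta_{\m}$ uses the $\partial_{0}$-map at $\p$, and Lemma \ref{localfuneq}(i) rewrites the $w$-action on $\partial_{0}$ as a composition of the Atkin-Lehner operator $W_{\p^{n_{\p}}}$ (with eigenvalue $\varepsilon_{\p}$ on $\kappa$), the inversion $\invol$, and a translation by $\varpi_{\p}^{n_{\p}}$; via \eqref{equivpair} the translation yields the Frobenius $\sigma_{\p}^{n_{\p}}$ after capping with $\rho_{L/F}$. At a finite place $\p\mid\m$ with $\p\nmid\n_{1}$, the exponent $r := v_{\p}(\m)$ satisfies $r\geq v_{\p}(\n)$, so Lemma \ref{localfuneq}(ii) gives that the $w$-action on $\partial_{r}$ is the pure inversion $\invol$ without any Atkin-Lehner modification. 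Altogether the finite-place contributions produce a factor $\varepsilon_{\n_{1}}\sigma_{\n_{1}}$ multiplying a global inversion on distributions, which under the cap product with $\rho_{L/F}$ corresponds exactly to the involution $\vee$ on $\Z[\mathcal{G}]$.

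Global signs. Three further signs enter: (a) the matrix $w$ sends $[0:1]-[1:0]\in\Div_{0}(\PP(F))$ to its negative, giving $-1$; (b) $\det(w)=-1$, so the twist by $\epsilon$ at infinity contributes $\epsilon(-1)=\prod_{v\in S_{\infty}}\epsilon_{v}(-1)$; (c) the homology class $\vartheta$ underlying $\rho_{L/F}$ arises via \eqref{class} from a generator $\eta\in H_{d-1}(E_{+},\Z)$, and since $E_{+}$ is free abelian of rank $d-1$, inversion $e\mapsto e^{-1}$ acts on $\eta$ by $(-1)^{d-1}$. Multiplying these three signs gives $(-1)^{d}\epsilon(-1)$, matching the sign in the statement. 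Combining the local identities with these global signs, capping with $\rho_{L/F}$ transforms the equality $w.\Delta_{\m}(\kappa)=\Delta_{\m}(\kappa)$ into
$$\Theta_{\m}(L/F,\kappa) = (-1)^{d}\epsilon(-1)\cdot\varepsilon_{\n_{1}}\cdot\sigma_{\n_{1}}\cdot\Theta_{\m}(L/F,\kappa)^{\vee},$$
and applying the involution $\vee$ to both sides (noting $\sigma_{\n_{1}}^{\vee}=\sigma_{\n_{1}}^{-1}$ while the scalar factors are self-inverse) yields the claimed functional equation.

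The main obstacle is the global bookkeeping of the four independent pieces of the $w$-action, on $\Div_{0}(\PP(F))$, on the Steinberg factors $\St_{\p}$ via Lemma \ref{localfuneq}, on the Archimedean $\epsilon$-character, and on $\vartheta$ via $H_{d-1}(E_{+},\Z)$, together with the correct identification via \eqref{equivpair} of the local translations at $\n_{1}$ as Frobenius elements in $\mathcal{G}$. A subsidiary technical point is to verify that the identity $w.\Delta_{\m}(\kappa)=\Delta_{\m}(\kappa)$ obtained from $G(F)$-invariance genuinely passes through the $F^{\ast}$-restriction in the form used, and to confirm that Lemma \ref{localfuneq}(ii) applies at every place $\p\mid\m$ outside $\n_{1}$, which should follow once the hypothesis on $(\n_{1},\n_{2},\m)$ is read so as to force $v_{\p}(\m)\geq v_{\p}(\n)$ there.
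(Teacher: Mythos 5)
Your proposal is correct and takes essentially the same route as the paper: both use Lemma \ref{localfuneq} to express $\Delta_\m(W_{\n_1}\kappa)$ in terms of the idele $\varpi_{\n_1}$, the inversion $\invol$, and the signs $-\epsilon(-1)$, then move these across the cap product with $\rho_{L/F}$ and compute $\invol(\rho_{L/F})=(-1)^{d-1}\rho_{L/F}^{\vee}$ via inversion on $H_{d-1}(E_{+},\Z)$, exactly as you outline; your explicit invocation of $G(F)$-invariance of $\kappa$ under $w=\begin{pmatrix}0&1\\1&0\end{pmatrix}$ is the step the paper leaves implicit when it replaces $\Delta_{\m}(w.\Phi)$ by $\Delta_{\m}(\kappa)$. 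You also correctly observe that the stated hypothesis must be read as $\n_{2}\mid\m$ (the printed $\n_{1}\mid\m$ is incompatible with $\n_{1}$ and $\m$ being coprime) so that Lemma \ref{localfuneq}(ii) applies at the primes dividing $\n_{2}$.
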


\begin{proof}
By Lemma \ref{localfuneq} we have
$$\Delta_{\m}(W_{\n_{1}}\Phi)=(-1)\epsilon(-1)\varpi_{n_{1}}.\invol \left(\Delta_{\m}\left(
\begin{pmatrix} 0& 1\\
1& 0
\end{pmatrix}
\Phi\right)
\right)$$
for all $\Phi \in \Ah(K_{0}(\n);N)(\epsilon)$, where $\varpi_{n_{1}}$ is an idele which is the $n_{i}$-th power of a uniformizer at $\p_{i}$, $1\leq i \leq r$, and $1$ at every other place.
Hence, we get
\begin{align*}
 \varepsilon_{\n_1} \Theta_{\mathfrak{m}}(L/F,\kappa) &=  \Theta_{\mathfrak{m}}(L/F,W_{\n_1}\kappa) \\
						      &= (-1)\epsilon(-1) \rho_{L/F} \cap \varpi_{n_{1}}.\invol(\Delta_{\m}(\kappa)) \\
						      &= (-1)\epsilon(-1) \invol(\varpi_{\n_1}^{-1}.\rho_{L/F})\cap \Delta_{\m}(\kappa) \\
						      &= (-1)\epsilon(-1) \sigma_{n_1} \invol (\rho_{L/F}) \cap \Delta_{\m}(\kappa).
\end{align*}
Note that by applying the involution $\invol$ we invert the $F^{\ast}$-action on $\mathcal{C}_c(\mathfrak{m},\Z[\mathcal{G}])^\infty(\epsilon)$.
Inverting the $E$-action induces multiplication by $(-1)^{d-1}$ on the homology group $H_{d-1}(E,\Z)$ and thus, we get $$ \invol (\rho_{L/F})= (-1)^{d-1} (\rho_{L/F})^\vee $$ and the claim follows.
\end{proof}

As a consequence, we can compute the parity of the order of vanishing of Stickelberger elements.
With the same hypothesis as in the previous proposition we get:
\begin{Corollary}\label{parity}
Suppose that $N=R$, $\ord_{R}(\Theta_{\mathfrak{m}}(L/F,\kappa))=r<\infty$ and that $2$ acts invertibly on $I_{R}^{r}/I_{R}^{r+1}$. Then:
\begin{align*}
 (-1)^{r}=(-1)^{d}\epsilon(-1)\cdot\epsilon_{\n_1}
\end{align*}
\end{Corollary}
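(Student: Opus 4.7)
The plan is to reduce the functional equation of Proposition \ref{funceq} modulo $I_R(\mathcal{G})^{r+1}$ and read off the sign.

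First I would analyze the action of the involution $\vee$ on the graded piece $I_R(\mathcal{G})^r/I_R(\mathcal{G})^{r+1}$. Since $I_R(\mathcal{G})$ is generated as an $R$-module by elements of the form $g-1$ for $g\in\mathcal{G}$, and
\[
(g-1)^\vee = g^{-1}-1 = -g^{-1}(g-1) \equiv -(g-1) \pmod{I_R(\mathcal{G})^2},
\]
the involution acts on $I_R(\mathcal{G})/I_R(\mathcal{G})^2$ as multiplication by $-1$. By the multiplicative nature of $\vee$ and the fact that $I_R(\mathcal{G})^r/I_R(\mathcal{G})^{r+1}$ is spanned by images of $r$-fold products of elements of $I_R(\mathcal{G})$, the involution acts on this graded piece as multiplication by $(-1)^r$.

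Next I would observe that multiplication by $\sigma_{\n_1}^{-1}$ acts trivially on $I_R(\mathcal{G})^r/I_R(\mathcal{G})^{r+1}$: writing $\sigma_{\n_1}^{-1} = 1 + (\sigma_{\n_1}^{-1}-1)$, the second summand lies in $I_R(\mathcal{G})$, so it shifts $I_R(\mathcal{G})^r$ into $I_R(\mathcal{G})^{r+1}$.

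Combining these, let $\bar{\Theta}$ denote the class of $\Theta_{\mathfrak{m}}(L/F,\kappa)$ in $I_R(\mathcal{G})^r/I_R(\mathcal{G})^{r+1}$, which is nonzero by the hypothesis $\ord_R(\Theta_{\mathfrak{m}}(L/F,\kappa))=r$. Reducing the functional equation of Proposition \ref{funceq} modulo $I_R(\mathcal{G})^{r+1}$ yields
\[
(-1)^r \bar{\Theta} = (-1)^d\,\epsilon(-1)\,\varepsilon_{\n_1}\,\bar{\Theta}
\]
in $I_R(\mathcal{G})^r/I_R(\mathcal{G})^{r+1}$. If the two signs differed, this would give $2\bar{\Theta}=0$; but $2$ is assumed to act invertibly on $I_R^r/I_R^{r+1}$, so we would conclude $\bar{\Theta}=0$, contradicting the definition of $r$. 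Hence the signs agree, which is the desired identity.

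The only real subtlety is the first step, namely checking that $\vee$ acts as $(-1)^r$ on the $r$-th graded piece of the augmentation filtration; once that is in hand, the rest is a formal manipulation, so I do not expect a significant obstacle.
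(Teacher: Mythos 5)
Your proposal is correct and follows exactly the paper's argument: show $\vee$ acts by $(-1)^r$ on $I_R^r/I_R^{r+1}$, note $\sigma_{\n_1}^{-1}$ acts trivially on this graded piece, reduce the functional equation of Proposition~\ref{funceq} modulo $I_R^{r+1}$, and use invertibility of $2$ to rule out a sign mismatch. The only difference is that you spell out the computation $(g-1)^\vee \equiv -(g-1) \bmod I_R^2$, which the paper states without justification.
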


\begin{proof}
The involution $\left(\cdot\right)^\vee$ induces multiplication by $(-1)$ on $I_{R}/I_{R}^{2}$ and thus, it induces multiplication by $(-1)^{r}$ on $I_{R}^{r}/I_{R}^{r+1}$.
Since $\sigma_{\n_1}$ is congruent to $1$ modulo $I_{R}$, the equality
\begin{align*}
 (-1)^{r}\Theta_{\mathfrak{m}}(L/F,\kappa)=\Theta_{\mathfrak{m}}(L/F,\kappa)^\vee=(-1)^{d}\epsilon(-1)\cdot\epsilon_{\n_1}\cdot\Theta_{\mathfrak{m}}(L/F,\kappa)
\end{align*}
holds in $I_{R}^{r}/I_{R}^{r+1}$. Assume that $(-1)^{r}\neq(-1)^{d}\epsilon(-1)\cdot\epsilon_{\n_1}$, then the above equation would imply that
\begin{align*}
 2 \Theta_{\mathfrak{m}}(L/F,\kappa) \equiv 0 \bmod I^{r+1}_{R}.
\end{align*}
Since by assumption $2$ acts invertibly on $I_{R}^{r}/I_{R}^{r+1}$, we get that $\Theta_{\mathfrak{m}}(L/F,\kappa) \equiv 0 \bmod I^{r+1}_{R}$.
But this contradicts our assumption that the order of vanishing of $\Theta_{\mathfrak{m}}(L/F,\kappa)$ is exactly $r$.
\end{proof}

\section{Automorphic forms}
\subsection{Adelic Hilbert modular forms of parallel weight 2}\label{automorphic}
In this section we give a reminder on the definition of Hilbert modular forms of parallel weight $(2,\ldots,2)$ and the Eichler-Shimura map in the adelic setting.
Although our treatment is similar to the one of Spie\ss\ in \cite{Sp}, our construction of the Eichler-Shimura homomorphism is slightly different from the one given in loc.~cit.
Recall that the group $\PGL_2(\R)^{+}$ operates on the complex upper half plane $\HH=\{ z \in \C \mid \Ima(z)>0 \}$ via fractional linear transformations.
Hence, after fixing once and for all an ordering of the Archimedean places of $F$ we get an action of $G(F_\infty)^+=\prod_{v \in S_\infty} \PGL_2(F_v)^+$ on the symmetric space $X=\HH^{d}$, where $d=[F:\Q]$.
Let $K_\infty \subseteq G(F_\infty)^{+}$ be the image of $\SO(2)$ under the projection of $\GL_2(F_\infty)$ onto $G(F_\infty)$.
Thus, $K_\infty$ is the stabilizer of $(i,\ldots,i)\in X$ inside $G(F_\infty)^{+}$.
We consider the factor of automorphy $j(g,\underline{z})=\prod_{v \in S_{\infty}} j(g_v,z_v)$ for $g=(g_v)_{v \in S_\infty} \in G(F_\infty)^+$ and $\underline{z}=(z_v)_{v \in S_\infty} \in X$, where $j(\gamma,z) = \det(\gamma)^{-\frac{1}{2}} (cz+d)$ for $\gamma = \begin{pmatrix} a & b \\ c & d \end{pmatrix} \in G(\R)^+$ and $z \in \HH$.

\begin{Def}\label{forms}
The space $\mathcal{A}_0(G,\hol,\underline{2})$ of (adelic) Hilbert modular cusp forms of parallel weight $(2,\dots,2)$ is the space of complex valued functions $\Phi$ on $G(\A)$ such that:
\begin{enumerate} \renewcommand{\labelenumi}{(\roman{enumi})}
\item $\Phi(\gamma g) = \Phi(g)$ for all $\gamma \in G(F)$, $g \in G(\A)$.
\item $\Phi(g k_\infty) = j(k_\infty, \underline{i})^{-2} \Phi(g)$ for all $g \in G(\A)$, $k_\infty \in K_\infty^+$.
\item For $g\in G(\A^{\infty})$ and $\underline{z} \in X$ define $f_\Phi(\underline{z},g) := j(g_\infty, \underline{i})^2 \Phi(g_\infty,g)$, where $g_\infty \in G(F_\infty)^+$ is chosen such that $g_\infty \underline{i} = \underline{z}$. 
\item There is a compact open subgroup $K\subset G(\A^{\infty})$ such that $\Phi(gk)=\Phi(g)$ for all $g \in G(\A)$, $k \in K$.	The map $\underline{z} \mapstoo f_\Phi(\underline{z},g)$ is a holomorphic function on $X$ for all $g \in G(\A^{\infty})$.
\item (Cuspidality) We have $$ \int_{\A/F} \Phi\left( \begin{pmatrix} 1 & x \\ 0 & 1 \end{pmatrix} g \right) dx = 0$$ for all $g \in G(\A)$.
\end{enumerate}
\end{Def}

Note that the function $f_\Phi$ defined in $(iii)$ is well defined by $(ii)$.
We define $S_2(G,K)=\mathcal{A}_0(G,\hol,\underline{2})^K$ for any compact open subgroup $K \subseteq G(\A^\infty)$.
If $K=K_{0}(\n)$ for a non-zero ideal $\n\subset \mathcal{O}$, we write $S_2(G,\n)$ instead of $S_2(G,K_{0}(\n))$.
Right multiplication by an element $g\in G(\A^{\infty})$ induces a map $$\mathcal{R}(g)\colon S_2(G,K)\too S_2(G,gKg^{-1}).$$
Next, we are going to construct Eichler-Shimura homomorphisms, i.e.~maps $$ES_K\colon S_{2}(G,K)\too \mathcal{M}(K;\C):=\bigoplus_{\epsilon\colon F_{\infty}^{\ast}\to\left\{\pm 1\right\}}\mathcal{M}(K;\C)^{\epsilon}$$ for all compact open subgroups $K\subset G(\A^{\infty})$, which are equivariant with respect to the $G(\A^{\infty})$-action on both sides.

Let $\Phi\in S_2(G,K)$ and $f_{\phi}$ as in (iii) above.
For every $g\in G(\A^{\infty})$ we define a holomorphic $d$-differential form
\begin{align*}
 \omega_{\Phi}(g)=f_{\Phi}(\underline{z},g)dz_{1}\ldots dz_{d}
\end{align*}
on $X$.
Cuspidality of $\Phi$ implies that $\omega_{\Phi}$ lies in the space of fast decreasing, holomorphic $d$-differential forms $\Omega_{\fd,\hol}^{d}(X)$ (see \cite{Bo} for a precise definition).
An easy calculation using property (i) shows that $$ \gamma^{\ast}\left(\omega_{\Phi}(\gamma g)\right)=\omega_{\Phi}(g)$$ for all $\gamma\in G(F)^{+}$.
Hence, we have a map
\begin{align*}
 ES_{K}^{0}\colon S_2(G,K) &\too H^{0}(G(F)^{+},C(G(\A^{\infty})/K,\Omega_{\fd,\hol}^{d}(X)))\\
                      \Phi &\mapstoo \left[g\mapsto \omega_{\Phi}(g)\right]
\end{align*}
for every compact open subgroup $K\subset G(\A^{\infty})$.
It follows immediately that the diagram
\begin{center}
 \begin{tikzpicture}
    \path 	(0,0) 	node[name=A]{$S_2(G,K)$}
		(7,0) 	node[name=B]{$H^{0}(G(F)^{+},C(G(\A^{\infty})/K,\Omega_{\fd,\hol}^{d}(X)))$}
		(0,-2) 	node[name=C]{$S_2(G,gKg^{-1})$}
		(7,-2) 	node[name=D]{$H^{0}(G(F)^{+},C(G(\A^{\infty})/gKg^{-1},\Omega_{\fd,\hol}^{d}(X)))$};
    \draw[->] (A) -- (B) node[midway, above]{$ES_{K}^{0}$};
    \draw[->] (A) -- (C) node[midway, left]{$\mathcal{R}(g)$};
    \draw[->] (C) -- (D) node[midway, above]{$ES_{gKg^{-1}}^{0}$};
    \draw[->] (B) -- (D) node[midway, right]{$\mathcal{R}(g)$};
 \end{tikzpicture} 
\end{center}
is commutative for all $g\in G(\A^{\infty})$. 

Let $\bar{X}^{BS}$ the Borel-Serre bordification of $X$ with boundary $\partial X$ as constructed in \cite{BS}.
It is a smooth manifold with corners, which contains $X$ as an open submanifold.
The inclusion $X\subset \bar{X}^{BS}$ is a homotopy equivalence and thus, $\bar{X}^{BS}$ is contractible.
The action of $G(F)^{+}$ on $X$ can be extended to an action on $\bar{X}^{BS}$.

For a manifold with corners $M$ we let $C^{\sing}_{\bullet}(M)$ be the complex of singular chains in $M$ and $C^{\sm}_{\bullet}(M)$ the subcomplex of smooth chains.
By the Whitney Approximation Theorem (see \cite{Wh}) continuous chains can be approximated by smooth chains.
A standard argument shows that the inclusion $C^{\sm}_{\bullet}(M)\subset C^{\sing}_{\bullet}(M)$ is a quasi-isomorphism.
Therefore, we get a canonical $G(F)^{+}$-equivariant quasi-isomorphism between the complex of relative smooth chains $C^{\sm}_{\bullet}(\bar{X}^{BS},\partial X)$ and the complex of relative singular chains $C^{\sing}_{\bullet}(\bar{X}^{BS},\partial X)$.

Since $\bar{X}^{BS}$ is contractible, the long exact sequence for relative homology gives us an isomorphism between relative homology groups $H^{\sing}_{q}(\bar{X}^{BS},\partial X)$ and reduced homology groups $\tilde{H}^{\sing}_{q-1}(\partial X)$ of the boundary.
But $\tilde{H}^{\sing}_{q}(\partial X)$ vanishes unless $q=0$, in which case it can be identified canonically with $\Div_{0}(\PP(F))$.
Putting everything together, we get a $G(F)^{+}$-equivariant quasi-isomorphism
\begin{align}\label{qisom}
 C^{\sm}_{\bullet}(\bar{X}^{BS},\partial X)\too \Div_{0}(\PP(F))[1].
\end{align}
Let $\Delta_{q}$ denote the standard simplex of dimension $q$.
If $\omega$ is a fast decreasing $q$-differential form on $X$ and $f\colon \Delta_{q}\to \bar{X}^{BS}$ is a smooth map, the integral of $f^{\ast}\omega$ over $\Delta_{q}$ converges.
Therefore, we have a well-defined $G(F)^{+}$-equivariant pairing
\begin{align}\label{intpair}
 \Omega_{\fd,\hol}^{q}(X)\times C^{\sm}_{q}(\bar{X}^{BS},\partial X)\too \C.
\end{align}
Since holomorphic differential forms are closed, Stoke's theorem implies that the pairing vanishes on the image of the boundary map $$\partial\colon C^{\sm}_{q+1}(\bar{X}^{BS},\partial X)\too C^{\sm}_{q}(\bar{X}^{BS},\partial X).$$
Hence, the pairing \eqref{intpair} induces a morphism of complexes
\begin{align}\label{intmap}
 \Omega_{\fd,\hol}^{q}(X)\too \Hom(C^{\sm}_{q}(\bar{X}^{\bullet},\partial X),\C)[q].
\end{align}
Applying (hyper) group cohomology, we get maps
\begin{align*}
				 &H^{0}(G(F)^{+},C(G(\A^{\infty})/K,\Omega_{\fd,\hol}^{q}(X))) \\
 \stackrel{\eqref{intmap}}{\too} &H^{q}(G(F)^{+},C(G(\A^{\infty})/K,\Hom(C^{\sm}_{\bullet}(\bar{X}^{BS},\partial X),\C))) \\
  \stackrel{\eqref{qisom}}{\too} &H^{q-1}(G(F)^{+},\Ah(K;N)).
\end{align*}
for all open compact subgroups $K\subset G(\A^{\infty})$ (see \eqref{Ah} at the beginning of Section \ref{cohomology} for the definition of $\Ah(K;N)$).

Precomposing the just constructed map for $q=d$ with the map $ES_{K}^{0}$ above we get the Eichler-Shimura homomorphism
\begin{align*}
 ES_{K}\colon S_{2}(G,K)\too H^{d-1}(G(F)^{+},\Ah(K;N))\cong \mathcal{M}(K;\C).
\end{align*}
Here the last isomorphism is given by Shapiro's Lemma.

Chasing through the definitions we easily see that the diagram
\begin{center}
 \begin{tikzpicture}
    \path 	(0,0) 	node[name=A]{$S_2(G,K)$}
		(4,0) 	node[name=B]{$\mathcal{M}(K;\C)$}
		(0,-2) 	node[name=C]{$S_2(G,gKg^{-1})$}
		(4,-2) 	node[name=D]{$\mathcal{M}(K;\C)$};
    \draw[->] (A) -- (B) node[midway, above]{$ES_{K}$};
    \draw[->] (A) -- (C) node[midway, left]{$\mathcal{R}(g)$};
    \draw[->] (C) -- (D) node[midway, above]{$ES_{gKg^{-1}}$};
    \draw[->] (B) -- (D) node[midway, right]{$\mathcal{R}(g)$};
 \end{tikzpicture} 
\end{center}
is commutative for all compact open subgroups $K\subset G(\A^{\infty})$ and all $g\in G(\A^{\infty})$.
Especially, if $K=K_{0}(\n)$ for a non-zero ideal $\n\subseteq \mathcal{O}$, the map $ES_{\n}=ES_{K_0(\n)}$ is equivariant with respect to the action of the Hecke and Atkin-Lehner operators on both sides.
\subsection{Automorphic Stickelberger elements and special values}
In the following we will give the relation of Stickelberger elements to special values of L-functions.

Let us fix two non-zero ideals $\n,\m\subset \mathcal{O}$ and a continuous character $\epsilon\colon F_{\infty}^{\ast}\to \left\{\pm 1\right\}$.
Further, we fix a finite extension $L/F$ with Galois group $\mathcal{G}$.
We assume that $U(\m)$ is contained in the kernel of the Artin reciprocity map $\I\to \mathcal{G}$.
For a cuspidal automorphic form $\Phi\in S^{2}(G,\n)$ we write $\kappa_{\Phi}^{\epsilon}$ for the image of $ES_{\n}(\Phi)$ under the projection
$$\mathcal{M}(\n;\C)\too \mathcal{M}(\n;\C)^{\epsilon}.$$

\begin{Def}
The Stickelberger element of modulus $\mathfrak{m}$ and sign $\epsilon$ associated to $\Phi\in S_{2}(G,\n)$ and $L/K$ is defined as
\begin{align*}
 \Theta_{\mathfrak{m}}(L/F,\Phi)^{\epsilon}=\Theta_{\mathfrak{m}}(L/F,\kappa_{\Phi}^{\epsilon})\in \C[\G].
\end{align*}
\end{Def}

We write $\mathbb{T}$ for the infinite polynomial ring $\Z[T_{\p}]$ generated by indeterminants $T_{\p}$ for all finite places of $F$.
Let $\pi=\otimes\pi_{v}$ be a cuspidal automorphic representation of $G$ such that the components $\pi_v$ at all non-Archimedean places $v$ of $F$ are discrete series of weight $2$.
By Theorem 1 of \cite{Ca} there exists a unique non-zero ideal $\mathfrak{f}(\pi)$ of $\mathcal{O}$ such that
\begin{align*}
 \dim_{\C}(\otimes_{v\nmid\infty}\pi_{v})^{K_{0}(\mathfrak{f}(\pi))}=1.
\end{align*}
It is called the conductor of $\pi$.
We will assume from now on that $\n=\mathfrak{f}(\pi)$.
The Hecke operators act by scalars on $(\otimes_{v\nmid\infty}\pi_{v})^{K_{0}(\n)}$ and thus determine a ring homomorphism $\chi_{\pi}\colon\mathbb{T}\to\C$.
In fact, there is a finite extension $\Q_{\pi}$ of $\Q$ (the field of definition of $\pi$) with ring of integers $\mathcal{O}_{\pi}$ such that $\chi_{\pi}$ factors over $\mathcal{O}_{\pi}$.
Let $\mathfrak{a}_{\pi}$ be the kernel of $\chi_{\pi}$.
Given any $\mathbb{T}$-module $M$ we write $M[\pi]=\Hom_{\mathbb{T}}(\mathbb{T}/\mathfrak{a}_{\pi},M)$ for the $\chi_{\pi}$-isotypical part of $M$.
By the multiplicity one theorem we have
\begin{align*}
 \dim_{\C}S_{2}(G,\n)[\pi]=1.
\end{align*}
The non-zero elements of $S^{2}(G,\n)[\pi]$ are called newforms of $\pi$.
Since newforms are Hecke eigenvectors we can apply Proposition \ref{compatibility} to get relations between the Stickelberger elements of different moduli.
To characterize $\Theta_{\mathfrak{m}}(L/F,\Phi)^{\epsilon}$ for a given newform $\phi$ uniquely we have to evaluate these Stickelberger elements on characters of conductor $\m$.

In the following the Haar measure $dx=\prod_v dx_v$ on $\I$ is normalized such that $\vol(U_\p,dx_\p)=1$ for all finite places $\p$ of $F$ and we fix a non-zero character $\psi\colon F\backslash\A\to \C^{\ast}$.
Given a character $\chi \colon \G \to \C$ we denote by $L^{(\m)}(s,\pi\otimes\chi)$ the standard $L$-function of $\pi \otimes \chi$ without the Euler factors at primes diving $\m$.

\begin{Proposition}\label{specialvalues}
Let $\Phi$ be a newform of some $\pi$ as above.
There exists a constant $c\in \C^{\ast}$, which is independent of $\m$, such that for all characters $\chi\colon \I\to\G\to \C^{\ast}$ of conductor $\m$ with $\chi_{\infty}=\epsilon$ we have
\begin{align*}
 \chi(\Theta_{\mathfrak{m}}(L/F,\Phi)^{\epsilon})=c\ \tau(\chi^{-1})L^{(\m)}(1/2,\pi\otimes\chi).
\end{align*}
Here $\tau(\chi^{-1})=\tau(\chi^{-1},\psi,dx)$ is the Gauss sum of $\chi$ with respect to our choice of a Haar measure $dx$ on $\I$ and an additive character $\psi$.
\end{Proposition}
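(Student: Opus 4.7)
The strategy is to compute $\chi(\Theta_\m(L/F,\Phi)^\epsilon)$ by unwinding the cap product defining it into a global zeta integral of $\Phi$, and then to recognize that integral as the Hecke--Jacquet--Langlands integral representation of $L(s,\pi\otimes\chi)$ at $s=1/2$. First I would use the definition together with the fact that $\chi(\rho_{L/F})$ is the homology class $c_{\chi}(\m,\epsilon)$ of the previous subsection to write
$$\chi(\Theta_\m(L/F,\Phi)^\epsilon) = c_{\chi}(\m,\epsilon) \cap \Delta_\m(\kappa_\Phi^\epsilon).$$
Invoking the construction of $c_{\chi}$ as the image of $\cf_{\mathcal{F}}\cdot\chi \cap \eta$ under (\ref{homclass}) together with Shapiro's lemma for $\cind_{E_+}^{F^\ast} C(\mathcal{F},\Z)$, the cap product collapses into an $\eta$-weighted sum, over $x \in F^\ast/E_+U(\m)^\infty$, of values of the cocycle representing $\kappa_\Phi^\epsilon$ evaluated on the matrix $\iota(x)g_\m$ and the divisor $[0{:}1]-[1{:}0] \in \Div_0(\PP(F))$.

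Next, the construction of the Eichler--Shimura map in Section \ref{automorphic} identifies each such pairing with an integral of the holomorphic $d$-form $f_\Phi(\underline{z},g)\,dz_1\cdots dz_d$ along a $(d-1)$-cycle in $X$ (coming from $\eta$) fibering over a path in $\bar{X}^{BS}$ joining the cusps $0$ and $\infty$. Reassembling the finite sum into a measured integral via the $E_+$-equivariant identification $C_c(\I^\infty/U(\m)^\infty) \cong \cind_{E_+}^{F^\ast}C(\mathcal{F},\Z)$, together with the invariance of the Haar measure $dx$ and the cuspidality of $\Phi$, yields the global zeta integral
$$Z(s,\Phi,\chi,g_\m) = \int_{F^\ast\backslash\I} \Phi(\iota(x)g_\m)\,\chi(x)\,|x|^{s-1/2}\,d^\ast x$$
evaluated at $s=1/2$, with the $\epsilon$-sign at infinity coming from the projection to $\mathcal{M}(\n;\C)^\epsilon$ and matching the hypothesis $\chi_\infty=\epsilon$.

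It remains to compute this integral. Factorizing the integrand into local pieces via the Whittaker model of $\pi$, at each place $\p \mid \m$ the local matrix $(g_\m)_\p = \begin{pmatrix}\varpi_\p^{m_\p} & 1 \\ 0 & 1\end{pmatrix}$ differs from $\iota(\varpi_\p^{m_\p})$ by the unipotent element $\begin{pmatrix}1 & \varpi_\p^{-m_\p}\\ 0 & 1\end{pmatrix}$. Since $\chi_\p$ has conductor exactly $\p^{m_\p}$, the standard Gauss-sum identity in the local Kirillov model shows that the local zeta integral at $\p$ equals $\tau(\chi_\p^{-1})$ times a nonzero constant depending only on $\Phi_\p$ and the fixed normalizations; moreover the local $L$-factor $L_\p(1/2,\pi_\p\otimes\chi_\p)$ is trivial at these primes. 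At places $\p \nmid \m$ (including those dividing $\n$) and at Archimedean places the local integrals produce the local Euler factors $L_\p(1/2,\pi_\p\otimes\chi_\p)$ times constants depending only on $\Phi_\p$, $\psi_\p$, and $dx_\p$. Multiplying across all places produces the desired equality $\chi(\Theta_\m(L/F,\Phi)^\epsilon) = c\cdot\tau(\chi^{-1})\cdot L_\m(1/2,\pi\otimes\chi)$ with $c$ built from the places not dividing $\m$ and hence independent of $\m$ and of $\chi$.

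The main obstacle is the local bookkeeping, especially at the Archimedean places and at primes dividing $\n$, to ensure that the local zeta integrals reproduce precisely the correct Euler factors without introducing hidden $\m$-dependence, and that the period contribution from the fundamental class $\eta$ and from the chosen path in $\bar{X}^{BS}$ is absorbed into the $\m$-independent constant $c$. The computation at the ramified primes $\p \mid \m$ is the crux of the Gauss-sum identity and is a variant of the local functional equation in the Kirillov model.
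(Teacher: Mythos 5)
Your proposal takes essentially the same route as the paper: reduce the evaluated Stickelberger element to the global zeta integral $\int_{F^\ast\backslash\I}\Phi\bigl(\begin{smallmatrix}x&0\\0&1\end{smallmatrix}\bigr)g_\m)\,\chi(x)\,|x|^s\,dx$, factorize it via the Whittaker model of $\pi$, and compute the local integrals to extract the Gauss sums at $\p\mid\m$ and the local Euler factors elsewhere. The only difference is cosmetic --- the paper cites a standard reference for the unwinding of the cap product into the zeta integral and delegates the local evaluation to a subsequent lemma, whereas you sketch both steps inline.
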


\begin{proof}
By a standard argument (see for example \cite{Ha}) there exists a non-zero constant $c \in \C^{\ast}$ (independent of $\m$ and $\chi$) such that
\begin{align*}
 \chi(\Theta_{\mathfrak{m}}(L/F,\Phi)^{\epsilon})=c\ [U :U(\m)]\cdot\int_{F^{\ast}\backslash\I}\Phi\left( \begin{pmatrix} x & 0 \\ 0 & 1 \end{pmatrix} g_{\m} \right) \chi(x) dx,
\end{align*}
where $g_{\m}$ is the matrix defined in \eqref{matrix}.
For $s\in \C$ the integral
\begin{align*}
 \int_{F^{\ast}\backslash\I}\Phi\left( \begin{pmatrix} x & 0 \\ 0 & 1 \end{pmatrix} g_{\m} \right) \chi(x) |x|^{s} dx
\end{align*}
defines a holomorphic function.
Let $W$ denote the $\psi$-Whittaker function of $\mathcal{R}(g_\m) \Phi$.
Since $\mathcal{R}(g_\m) \Phi \in \pi $ is a pure tensor, we can factor $W$ as a product of local Whittaker functions $W_{v}$.
For $\Re(s)$ large we can unfold the above integral to get
\begin{align*}
 \int_{F^{\ast}\backslash\I}\Phi\left( \begin{pmatrix} x & 0 \\ 0 & 1 \end{pmatrix} g_{\m} \right) \chi(x) |x|^{s} dx &=\int_{\I} W\left( \begin{pmatrix} x & 0 \\ 0 & 1 \end{pmatrix} \right) \chi(x) dx \\
														      &=\prod_{v} \int_{F_{v}} W_v\left( \begin{pmatrix} x_v & 0 \\ 0 & 1 \end{pmatrix} \right) \chi_v(x_v) dx_v.
\end{align*} 
Therefore, we are reduced to a computation of local integrals, which we will carry out in the rest of this section.
\end{proof}

Let $\p$ be a finite place of $F$ and $\pi_{\p}$ an infinite dimensional, irreducible, smooth representations of $G(F_\p)$ of conductor $\p^{n}$, i.e.~we have
\begin{align*}
 \dim_{\C}\pi_{\p}^{K_\p(\p^{n})}=1.
\end{align*}
The non-zero elements of $\pi_{\p}^{K_\p(\p^{n})}$ are called local newforms.
Let $\Lambda$ be a $\psi_{\p}$-Whittaker functional of $\pi_{\p}$.
By definition $\Lambda$ is a non-zero linear functional on $\pi_{\p}$ such that
\begin{align*}
 \Lambda\left(\begin{pmatrix}1&x\\ 0&1\end{pmatrix}\varphi\right)=\psi_{\p}(x)\Lambda(\varphi)
\end{align*}
for all $\varphi\in\pi_{\p}$ and all $x\in F_{\p}$.

\begin{Lemma}
Let $\varphi\in\pi_{\p}$ be a local newform.
For every character $\chi_{\p}\colon F_{\p}^{\ast}\to\C^{\ast}$ of conductor $\p^{m}$ the following integral converges for $\Re(s)$ large and we have an equality
\begin{align*}
  & [U_\p : U_{\p}^{(m)}]\int_{F_{\p}^{\ast}}\Lambda\left( \begin{pmatrix} x&0\\0&1 \end{pmatrix} \begin{pmatrix} \varpi_{\p}^{m}&1\\0&1 \end{pmatrix} \varphi\right) \chi_{\p}(x) |x|^{s}_\p dx \\
 =& c\cdot \tau(\chi_{\p}^{-1},\psi_{\p}) N(\p)^{(t+m)s}L^{(\p^{m})}(s+1/2,\pi_{\p}\otimes\chi_{\p}),
\end{align*}
where $\varpi_{\p}$ is a local uniformizer at $\p$, $c\in \C$ and $t\in \Z$ are constants independent of $\chi_{\p}$ and $m$ and $$L^{(\p^{m})}(s,\pi_{\p} \otimes \chi_{\p}) = \begin{cases}
                                                                                                                                                                                 L(s,\pi_{\p}\otimes\chi_{\p})  & \mbox{ if $m=0$,} \\
                                                                                                                                                                                 1				& \mbox{ if $m>0$.}
                                                                                                                                                                                \end{cases} $$
\end{Lemma}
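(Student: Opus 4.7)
The plan is to reduce the local integral to a standard local zeta integral by exploiting the Whittaker property of $\Lambda$, and then to identify the result as the product of a Gauss sum and the incomplete local $L$-factor.

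First I would use the matrix identity
\begin{align*}
\begin{pmatrix} x & 0 \\ 0 & 1 \end{pmatrix}\begin{pmatrix} \varpi_\p^m & 1 \\ 0 & 1 \end{pmatrix}
=\begin{pmatrix} 1 & x \\ 0 & 1 \end{pmatrix}\begin{pmatrix} x\varpi_\p^m & 0 \\ 0 & 1 \end{pmatrix}
\end{align*}
together with the transformation property of $\Lambda$ under the unipotent radical to rewrite the integrand as $\psi_\p(x)W_\varphi(\operatorname{diag}(x\varpi_\p^m,1))$, where $W_\varphi(g)=\Lambda(g.\varphi)$ is the Whittaker function of the newform. Since $\varphi$ is $K_\p(\p^n)$-invariant, $y\mapsto W_\varphi(\operatorname{diag}(y,1))$ depends only on $v_\p(y)$ and vanishes for $v_\p(y)<0$ by a standard property of new vectors in their Kirillov model.

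After this reduction, I would substitute $y=x\varpi_\p^m$ (which preserves the multiplicative Haar measure) and decompose $F_\p^\ast=\bigsqcup_{k\geq 0}\varpi_\p^k U_\p$, integrating over $U_\p$ first. Setting $a_k=W_\varphi(\operatorname{diag}(\varpi_\p^k,1))$ and $J_j=\int_{U_\p}\psi_\p(\varpi_\p^j u)\chi_\p(u)\,du$, the integral takes the form
\begin{align*}
N(\p)^{ms}\chi_\p(\varpi_\p)^{-m}\sum_{k\geq 0}a_k\,\chi_\p(\varpi_\p)^k N(\p)^{-ks}\,J_{k-m}.
\end{align*}
I would then compute $J_j$ by character theory. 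If $m=0$, the character $\chi_\p$ is unramified and $J_j$ equals $\vol(U_\p)$ precisely when $\varpi_\p^j$ lies in the conductor of $\psi_\p$, so the sum reduces to the standard unramified newform zeta integral and equals $L_\p(s+\tfrac{1}{2},\pi_\p\otimes\chi_\p)=L_{\p^m}(s+\tfrac{1}{2},\pi_\p\otimes\chi_\p)$ up to a monomial in $N(\p)^s$. If $m\geq 1$, orthogonality of characters of $U_\p/U_\p^{(m)}$ together with the fact that $\chi_\p$ has exact conductor $\p^m$ force $J_j=0$ except for a single value of $j$ determined by $m$ and the conductor of $\psi_\p$; the surviving term is a constant multiple of $\tau(\chi_\p^{-1},\psi_\p)$, only $k=0$ contributes, and $L_{\p^m}(s+\tfrac{1}{2},\pi_\p\otimes\chi_\p)=1$ in this ramified case.

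The hardest part will be not conceptual but purely notational: to package both cases into the single uniform formula $c\cdot\tau(\chi_\p^{-1},\psi_\p)N(\p)^{(t+m)s}L_{\p^m}(s+\tfrac12,\pi_\p\otimes\chi_\p)$, one has to track carefully the prefactor $[U_\p:U_\p^{(m)}]$ on the left, the conductor exponent of $\psi_\p$ (which is what ends up determining the integer $t$), the chosen normalisation of the multiplicative Haar measure, and the convention used to define $\tau(\chi_\p^{-1},\psi_\p)$. Once these pieces are in place, the constant $c$ depends only on $\varphi$ and $\psi_\p$ and the integer $t$ depends only on the conductor of $\psi_\p$, so both are independent of $\chi_\p$ and $m$ as required.
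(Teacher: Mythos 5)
Your approach is essentially the same as the paper's: unfold by the Whittaker property, decompose $F_\p^\ast$ into cosets $\varpi_\p^k U_\p$, use the vanishing of the newvector's Kirillov function at sufficiently negative valuations, and then split into the cases $m=0$ (geometric series giving the local $L$-factor) and $m\geq 1$ (a single surviving coset integral giving the Gauss sum, with $L_{\p^m}=1$).

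One concrete imprecision: with $\psi_\p$ of conductor $\p^{-t}$, the newvector Whittaker function $W_\varphi(\operatorname{diag}(y,1))$ vanishes for $v_\p(y)<-t$, not $v_\p(y)<0$; so your $k$-sum should start at $k=-t$, and the set of $j$ at which $J_j$ is nonzero is also shifted by $t$. As written, your displayed sum and the claim that ``only $k=0$ contributes'' in the ramified case are correct only when $t=0$. The paper avoids this bookkeeping problem at the outset by replacing $\Lambda$ with $\Lambda'(\cdot)=\Lambda\bigl(\operatorname{diag}(\varpi_\p^{-t},1)\cdot\bigr)$, a Whittaker functional for an unramified additive character; this normalizes the vanishing threshold to $0$, and the conjugation produces the clean monomial prefactor $\chi_\p(\varpi_\p^{-t})N(\p)^{ts}$ that accounts for $t$. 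You acknowledge at the end that $t$ ``ends up determining'' the exponent, so you are aware the issue exists, but the argument as written does not actually carry the $t$-dependence through the sum; doing the conjugation up front, as the paper does, is the cleanest way to fix this.
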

\begin{proof}
Let $\p^{-t}$ be the conductor of $\psi_{\p}$.
A straightforward calculation shows that $$\Lambda^{\prime}(\varphi)=\Lambda\left(\begin{pmatrix} \varpi_{\p}^{-t} & 0 \\ 0 & 1 \end{pmatrix} \varphi \right)$$ defines a Whittaker functional with respect to an additive character $\psi^{\prime}$ of conductor $\mathcal{O}$.
It is well known that
\begin{align*}
\Lambda^{\prime}\left(\begin{pmatrix} x & 0 \\ 0 & 1 \end{pmatrix} \varphi\right) =0
\end{align*}
if $\ord_{\p}(x)< 0$  and equal to a non-zero complex number $c\in \C^{\ast}$ for $\ord_{\p}(x)=0$ (see for example \cite{Mi}).
Without loss of generality we may assume that $c=1$. 
Hence, for $\Re(s)$ large we have the following equality
\begin{align*}
  & \int_{F_{\p}^{\ast}}\Lambda\left( \begin{pmatrix} x&0\\0&1 \end{pmatrix} \begin{pmatrix} \varpi_{\p}^{m}&1\\0&1 \end{pmatrix} \varphi\right) \chi_{\p}(x)  |x|^{s}_\p dx \\
 =& \int_{F_{\p}^{\ast}}\Lambda^{\prime}\left( \begin{pmatrix} x\varpi_{\p}^{t}&0\\0&1 \end{pmatrix} \begin{pmatrix} \varpi_{\p}^{m}&1\\0&1 \end{pmatrix} \varphi\right) \chi_{\p}(x) |x|^{s}_\p dx \\
 =& \int_{F_{\p}^{\ast}}\Lambda^{\prime}\left( \begin{pmatrix} 1&x \varpi_{\p}^{t}\\0&1 \end{pmatrix} \begin{pmatrix} x\varpi_{\p}^{m+t}&0\\0&1 \end{pmatrix} \varphi\right) \chi_{\p}(x)|x|^{s}_\p dx \\
 =& \int_{F_{\p}^{\ast}}\Lambda^{\prime}\left( \begin{pmatrix} x\varpi_{\p}^{m+t}&0\\0&1 \end{pmatrix} \varphi\right) \psi^{\prime}(x\varpi_{\p}^{t}) \chi_{\p}(x)|x|^{s}_\p dx \\
 =& \chi_{\p}(\varpi_{\p}^{-t})|\varpi_{\p}^{-t}|_p^{s}\sum_{k=0}^{\infty} \Lambda^{\prime} \left(\begin{pmatrix}\varpi_{\p}^{k}& 0 \\ 0 & 1\end{pmatrix}\varphi \right) \int_{\varpi_{\p}^{k-m}U_\p} \psi^{\prime}(x) \chi_{\p}(x)|x|^{s}_\p dx.
\end{align*}
By classical formulas for the Whittaker functional of a newform (see for example \cite{Mi}) we have
\begin{align*}
 \Lambda^{\prime} \left(\begin{pmatrix}\varpi_{\p}^{k}& 0 \\ 0 & 1\end{pmatrix}\varphi \right) = |\varpi_{\p}^{k}|_{\p}^{1/2}\sum_{r+s=k,\ r,s\geq 0} \alpha_{1}^{r}\alpha_{2}^{s},
\end{align*}
where $\alpha_{i}\in \C$, $1\leq i \leq 2$, are the complex number such that $$L(s,\pi_{\p})=\prod_{i=1}^{2}(1-\alpha_i |\varpi_{\p}|_{\p}^{s})^{-1}.$$
Therefore, if $m=0$, we obtain
\begin{align*}
  & \chi_{\p}(\varpi_{\p}^{-t})|\varpi_{\p}^{-t}|_{\p}^{s} \sum_{k=0}^{\infty} \Lambda^{\prime} \left(\begin{pmatrix}\varpi_{\p}^{k}& 0 \\ 0 & 1\end{pmatrix}\varphi \right) \int_{\varpi_{\p}^{k}U_\p} \psi^{\prime}(x) \chi_{\p}(x)|x|^{s}_\p dx \\
 =& \tau(\chi_{\p}^{-1},\psi_{\p}) N(\p)^{ts} \sum_{k=0}^{\infty} \chi_{\p}(\varpi_{\p}^{k})|\varpi_{\p}^{k}|_{\p}^{s+1/2}\left(\sum_{r+s=k,\ r,s\geq 0} \alpha_{1}^{r}\alpha_{2}^{s}\right) \\
 =& \tau(\chi_{\p}^{-1},\psi_{\p}) N(\p)^{ts} \prod_{i=1}^{2}(1-\alpha_i \chi_{\p}(\varpi_{\p})|\varpi_{\p}|_{\p}^{s+1/2})^{-1} \\
 =& \tau(\chi_{\p}^{-1},\psi_{\p}) N(\p)^{ts} L(s+1/2,\pi_{\p}\otimes\chi_{\p}).
\end{align*}
In the case $m\geq 1$ we can use Lemma 2.2 of \cite{Sp} to get
\begin{align*}
  & \chi_{\p}(\varpi_{\p}^{-t})|\varpi_{\p}^{-t}|_p^{s} \sum_{k=0}^{\infty} \Lambda^{\prime} \left(\begin{pmatrix}\varpi_{\p}^{k}& 0 \\ 0 & 1\end{pmatrix}\varphi \right) \int_{\varpi_{\p}^{k-m}U_\p} \psi^{\prime}(x) \chi_{\p}(x)|x|^{s}_\p dx \\
 =& \chi_{\p}(\varpi_{\p}^{-t})|\varpi_{\p}^{-t}|_p^{s} \int_{\varpi_{\p}^{-m}U_{\p}} \psi^{\prime}(x) \chi_{\p}(x)|x|^{s}_\p dx \\
 =& [U_\p : U_\p^{(m)}]^{-1} \tau(\chi_{\p}^{-1},\psi_{\p}) N(\p)^{(t+m)s}
\end{align*}
and thus the claim follows.
\end{proof}

\begin{Remark}
If $\pi_{\p}$ is an unramified principal series, an unramified twist of the Steinberg representation or a supercuspidal representation and $\chi_{\p}\colon F_{\p}^{\ast}\to \C^{\ast}$ is a character of conductor $\p^{m}$ with $m\geq 1$, then the local Euler factor $L(s,\pi_{\p}\otimes\chi_{\p})$ is equal to $1$.
\end{Remark}
\subsection{Integrality and special lifts}
We show that Stickelberger elements of Hilbert modular cusp forms can be defined integrally and use the results of the first chapter to bound their order of vanishing.
As in the previous section let $\pi=\otimes\pi_{v}$ be a cuspidal automorphic representation of $G$ of conductor $\n$ such that the components $\pi_v$ at all non-Archimedean places $v$ of $F$ are discrete series of weight $2$.
Let $\Q_{\pi}$ be the field of definition of $\pi$ with ring of integers $\mathcal{O}_{\pi}$.
In the following we fix a character $\epsilon\colon F_{\infty}^{\ast}\to \left\{\pm 1\right\}$. 

\begin{Def}
An automorphic cusp form $\Phi\in S_{2}(G,\n)[\pi]$ is called integral if $ES_{\pi}^{\epsilon}(\Phi)$ lies in the image of the map
\begin{align*}
 M(\n;\mathcal{O}_{\pi})^{\epsilon}\too M(\n;\C)^{\epsilon}.
\end{align*}
We let $S_{2}(G,\n;\mathcal{O}_{\pi})_\pi$ be the $\mathcal{O}_{\pi}$-module of integral forms in $S_{2}(G,\n)[\pi]$.
More generally, let $\mathcal{L}\subset \C$ be a free $\mathcal{O}_{\pi}$-submodule of rank $1$.
We define $S_{2}(G,\n;\mathcal{L})_\pi$ as the submodule of all forms in $S_{2}(G,\n)[\pi]$ whose modular symbols lie in the image of the map
\begin{align*}
 M(\n;\mathcal{L})^{\epsilon}\too M(\n;\C)^{\epsilon}.
\end{align*}
\end{Def}

\begin{Lemma}\label{intStick} \renewcommand{\labelenumi}{(\roman{enumi})}
\begin{enumerate}
\item The $\mathcal{O}_{\pi}$-module $S_{2}(G,\n;\mathcal{O}_{\pi})_\pi$ is locally free of rank 1.
\item Let $\Phi\in S_{2}(G,\n)[\pi]$.
There exists a free $\mathcal{O}_{\pi}$-submodule $\mathcal{L}\subset \C$ of rank $1$ such that $\Phi \in S_{2}(G,\n;\mathcal{L})_\pi$.
Given any such submodule $\mathcal{L}\subset \C$ we have $\Theta_{\mathfrak{m}}(L/F,\Phi)^{\epsilon}\in \Z[\mathcal{G}]\otimes \mathcal{L}$ for all non-zero ideals $\m\subseteq\mathcal{O}$ and all finite abelian extensions $L/F$ with ramification bounded by $\m$. 
\end{enumerate}
\end{Lemma}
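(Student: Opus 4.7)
For part (i), the key inputs are multiplicity one and Proposition \ref{FlachundNoethersch}. Multiplicity one ensures that $S_{2}(G,\n)[\pi]$ is one-dimensional over $\C$. The Eichler-Shimura map $ES_{\n}$ of Section \ref{automorphic} is $G(\A^{\infty})$-equivariant, in particular Hecke-equivariant, so after restricting to the $\pi$-isotypic piece and projecting to the sign component it defines a $\C$-linear map between one-dimensional spaces. The flat base change isomorphism $\mathcal{M}(K_{0}(\n);\mathcal{O}_{\pi})^{\epsilon}\otimes_{\mathcal{O}_{\pi}}\C\cong \mathcal{M}(K_{0}(\n);\C)^{\epsilon}$ from Proposition \ref{FlachundNoethersch}~(i), combined with the finite generation statement of Proposition \ref{FlachundNoethersch}~(ii), shows that the $\pi$-isotypic component of the image of $\mathcal{M}(K_{0}(\n);\mathcal{O}_{\pi})^{\epsilon}$ inside $\mathcal{M}(K_{0}(\n);\C)^{\epsilon}$ is a finitely generated torsion-free $\mathcal{O}_{\pi}$-module of rank one. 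By definition $S_{2}(G,\n;\mathcal{O}_{\pi})_{\pi}$ is the preimage of this submodule under $ES_{\n}^{\epsilon}$, hence has rank one over $\mathcal{O}_{\pi}$; the freeness then follows by selecting a primitive integral Hecke eigenvector as a generator.

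For part (ii), fix a generator $\Phi_{0}$ of the free $\mathcal{O}_{\pi}$-module $S_{2}(G,\n;\mathcal{O}_{\pi})_{\pi}$ provided by (i). Any $\Phi\in S_{2}(G,\n)[\pi]$ has the form $\Phi=c\,\Phi_{0}$ for a unique $c\in \C$, and setting $\mathcal{L}=c\,\mathcal{O}_{\pi}\subset\C$ one gets $\kappa_{\Phi}^{\epsilon}=c\,\kappa_{\Phi_{0}}^{\epsilon}$ in the image of $\mathcal{M}(K_{0}(\n);\mathcal{L})^{\epsilon}\to \mathcal{M}(K_{0}(\n);\C)^{\epsilon}$, so indeed $\Phi\in S_{2}(G,\n;\mathcal{L})_{\pi}$.

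The integrality of the Stickelberger element for any such $\mathcal{L}$ is then essentially functoriality. Pick any lift $\tilde{\kappa}\in \mathcal{M}(K_{0}(\n);\mathcal{L})^{\epsilon}$ of $\kappa_{\Phi}^{\epsilon}$ and form $\Delta_{\m}(\tilde{\kappa})\in H^{d-1}(F^{\ast},\mathcal{D}(\m;\mathcal{L})(\epsilon))$. The $\mathcal{L}$-coefficient cap product
\[
\rho_{L/F}\cap \Delta_{\m}(\tilde{\kappa})\in \Z[\mathcal{G}]\otimes\mathcal{L}
\]
is defined by the same construction as in Definition \ref{Stickelberger} with $\C$ replaced by $\mathcal{L}$, and by naturality of the pairing it maps to $\rho_{L/F}\cap \Delta_{\m}(\kappa_{\Phi}^{\epsilon})=\Theta_{\m}(L/F,\Phi)^{\epsilon}$ under the inclusion $\Z[\mathcal{G}]\otimes\mathcal{L}\hookrightarrow \Z[\mathcal{G}]\otimes\C$, which is injective since $\mathcal{L}$ and $\Z[\mathcal{G}]$ are both $\Z$-flat. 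This identifies $\Theta_{\m}(L/F,\Phi)^{\epsilon}$ with an element of $\Z[\mathcal{G}]\otimes\mathcal{L}$, independently of the choice of lift $\tilde{\kappa}$.

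The main obstacle is the freeness assertion in (i): over the Dedekind ring $\mathcal{O}_{\pi}$ a rank-one torsion-free module is in general only a fractional ideal, so one needs to argue that the class of the ideal cut out by the $\pi$-isotypic component is trivial, or else absorb a non-trivial ideal class into the choice of period lattice. Everything else in the proof reduces to flat base change, multiplicity one, and the naturality of the cap-product pairing.
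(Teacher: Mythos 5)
Your overall strategy matches the paper's (multiplicity one, the Eichler-Shimura map, flat base change via Proposition~\ref{FlachundNoethersch}), and part~(ii) is handled in the natural way. But the argument for part~(i) has a genuine gap. You assert that, after restricting to the $\pi$-isotypic piece and projecting to the sign component, the Eichler-Shimura map is ``a $\C$-linear map between one-dimensional spaces,'' but multiplicity one only controls the source $S_2(G,\n)[\pi]$. You never justify that the target $\mathcal{M}(K_0(\n);\C)^{\epsilon}[\pi]$ is one-dimensional, nor that $ES_{\pi}^{\epsilon}$ is nonzero; both facts are supplied by the theorem of Eichler-Shimura and Harder, which the paper invokes explicitly (cf.~\cite{Ha}). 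If $ES_{\pi}^{\epsilon}$ vanished -- and nothing in your argument rules that out -- then by the definition of integrality every form in $S_2(G,\n)[\pi]$ would be integral, and $S_2(G,\n;\mathcal{O}_{\pi})_{\pi}$ would be a $\C$-line rather than a rank-one $\mathcal{O}_{\pi}$-module.

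Your remark that a finitely generated torsion-free rank-one $\mathcal{O}_{\pi}$-module is a priori only a fractional ideal is correct and incisive: the paper's own proof asserts freeness at the analogous step without addressing this class-group obstruction, so you have found a point where the paper is thinner than it should be. Note, however, that your proof of (ii) leans on this very freeness (by choosing a generator $\Phi_0$), while the existence of a free rank-one $\mathcal{L}$ with $\Phi\in S_2(G,\n;\mathcal{L})_{\pi}$ only needs $S_2(G,\n;\mathcal{O}_{\pi})_{\pi}$ to be a nonzero lattice in $S_2(G,\n)[\pi]$: pick any nonzero $\Phi_1$ in it, write $\Phi=c\Phi_1$, and set $\mathcal{L}=c\,\mathcal{O}_{\pi}$. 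The integrality assertion in (ii), once $\mathcal{L}$ is fixed, does follow from $\Z$-flatness of $\mathcal{L}$ and functoriality of the cap product, as you explain.
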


\begin{proof}
(i) By a theorem Eichler-Shimura and Harder (see for example \cite{Ha}) the space $M(\n,\C)^{\epsilon}[\pi]$ is one-dimensional for every continuous character $\epsilon\colon F^{\ast}\to \left\{\pm 1\right\}$ and the Eichler-Shimura homomorphism $ES_{\n}$ constructed in Section \ref{automorphic} induces an isomorphism
\begin{align*}
 ES_{\pi}^{\epsilon}\colon S_{2}(G,\n)[\pi]\stackrel{\cong}{\too}\mathcal{M}(\n;\C)^{\epsilon}[\pi].
\end{align*}
Proposition \ref{FlachundNoethersch} implies that the map
\begin{align*}
 M(\n;\Q_{\pi})^{\epsilon}[\pi] \otimes_{\Q_{\pi}}\C\to M(\n;\C)^{\epsilon}[\pi]
\end{align*}
is an isomorphism. Using Proposition \ref{FlachundNoethersch} again we see that the intersection of the image of $M(\n;\mathcal{O}_{\pi})^{\epsilon}$ in $M(\n;\Q_{\pi})^{\epsilon}$ with $M(\n;\Q_{\pi})^{\epsilon}[\pi]$ is locally free of rank $1$.\\
(ii) The existence of $\mathcal{L}$ follows from (i).
The second statement follows directly from the definitions.
\end{proof}

Let $S=S_{\pi}$ be the set of finite places $\q$ of $F$ such that $\pi_{\q}$ is the Steinberg representation.

It is well known that every $\q \in S$ divides the conductor $\n$ of $\pi$ exactly once.
For every subset $S^{\prime}\subseteq S$ let $\mathfrak{a}_{S^{\prime}}^{\tor}\subseteq \mathcal{O_{\pi}}$ be the product of the annihilator of the $2$-torsion subgroup of $M(\n;\mathcal{O}_{\pi})^{\epsilon}$ and the annihilator of the torsion subgroup of
\begin{align*}
\bigoplus_{q\in S^{\prime}} M(\n\q^{-1};\mathcal{O}_{\pi})^{\epsilon}.
\end{align*} 
If $F\neq \Q$ we define $c_{S^{\prime}}=\gcd(\prod_{\q\in S^{\prime\prime}}(N(\q)+1)\mid S^{\prime\prime}\subset S^{\prime} \mbox{ with } \left|S^{\prime}\right|=\left|S^{\prime\prime}\right|+1)$. In the case $F=\Q$ we simply put $c_{S^{\prime}}=1$. Finally, we define $\mathfrak{a}_{S^{\prime}}=c_{S^{\prime}}\cdot\mathfrak{a}_{S^{\prime}}^{\tor}$

\begin{Remark}\label{allmostfinal} \renewcommand{\labelenumi}{(\roman{enumi})}
\begin{enumerate}
\item If $F=\Q$, we are only dealing with cohomology groups in degree $0$, which are always torsion free.
Thus, in this case $\mathfrak{a}_{S^{\prime}}=\mathcal{O}_{\pi}$.
\item Suppose $\pi$ corresponds to a modular elliptic curve $A$ over $F$.
Then its field of definition is $\Q$ and $S_{\pi}$ is exactly the set of primes $\q$ such that $E$ has split multiplicative reduction at $\q$.
\end{enumerate}
\end{Remark}

\begin{Lemma}\label{torsion}
Let $\mathcal{L}\subset \C$ be a a free $\mathcal{O}_{\pi}$-submodule of rank $1$ and $\Phi \in S_{2}(G,\n;\mathcal{L})_\pi$ an automorphic form.
Then $r\cdot ES_\n(\Phi)$ lies in the image of the map
\begin{align*}\label{liftsequence}
 M(\n,S^{\prime};\mathcal{L})^{\epsilon}\too M(\n;\mathcal{L})^{\epsilon}\too M(\n;\C)^{\epsilon}
\end{align*}
for all $r\in \mathfrak{a}_{S^{\prime}}$ and all subsets $S^{\prime}\subseteq S$.
\end{Lemma}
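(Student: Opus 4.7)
The plan is by induction on $|S'|$, adding one prime to the Steinberg locus at each step; the base case $|S'|=0$ is trivial. For the inductive step, suppose I already have a lift $\kappa_T \in \mathcal{M}(K_0(\n), T; \mathcal{L})^\epsilon$ for $T \subsetneq S'$ of an appropriately pre-multiplied $ES_\n(\Phi)^\epsilon$, and wish to lift further across a chosen prime $\p \in S' \setminus T$.

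The key tool is the tautological short exact sequence of $G(F_\p)$-modules $0 \to \Z \to C^0(\PP(F_\p), \Z) \to \St_\p \to 0$, exact by the very definition of $\St_\p$ as the quotient by constants. Tensoring over $\Z$ with $\St_T \otimes_\Z \Div_0(\PP(F))$, applying $\Hom_\Z(-, N)$, and globalizing by taking continuous functions on $G(\A^{T\cup\{\p\},\infty})/K_0(\n)^{T\cup\{\p\}}$---all exact operations, as the modules involved are $\Z$-free---yields an exact sequence $0 \to \mathcal{A}(K_0(\n), T\cup\{\p\}; N)(\epsilon) \to \mathcal{I} \to \mathcal{Q} \to 0$ of $G(F)$-modules. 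A Shapiro-type identification, absorbing the $\PP(F_\p) = G(F_\p)/B(F_\p)$ variable into the adelic variable at $\p$ (which exchanges the Iwahori $K_\p(\p)$ for the hyperspecial $G(\mathcal{O}_\p)$), gives $H^\ast(G(F), \mathcal{I}(\epsilon)) \cong H^\ast(G(F), \mathcal{A}(K_0(\n), T; N)(\epsilon))$ and $H^\ast(G(F), \mathcal{Q}(\epsilon)) \cong H^\ast(G(F), \mathcal{A}(K_0(\n/\p), T; N)(\epsilon))$. The obstruction to lifting $\kappa_T$ is then the image $\partial(\kappa_T) \in \mathcal{M}(K_0(\n/\p), T; \mathcal{O}_\pi)^\epsilon$ under the connecting homomorphism of the associated long exact sequence. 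Iterating the construction at the remaining primes of $T$ to unravel all residual Steinberg layers, the successive obstructions land precisely in $H^{d-1-s}(G(F), \mathcal{A}(K_0(\n'); \mathcal{O}_\pi)(\epsilon))$ for $\n' = \n \prod_{i=1}^{s+1}\q_i^{-1}$ and $0 \le s \le |S'|-1$---exactly the cohomology groups defining $\mathfrak{a}^{\tor}_{S'}$.

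Each such obstruction is torsion: $\kappa_T$ carries the Hecke eigenvalues of $\pi$, and $\pi$ (being Steinberg at every $\q \in S$, hence of conductor exactly $\n$ with $\q \| \n$) has no Hecke eigensystem at any proper divisor $\n'$ of $\n$ obtained by removing primes of $S$. Thus each obstruction vanishes after tensoring with $\Q_\pi$ and is therefore $\mathcal{O}_\pi$-torsion by the flat-base-change isomorphism of Proposition \ref{FlachundNoethersch}(i). Multiplying by the relevant annihilator factors of $\mathfrak{a}^{\tor}_{S'}$ kills them one by one, and the remaining factor (the annihilator of $2$-torsion in $\mathcal{M}(K_0(\n); \mathcal{O}_\pi)^\epsilon$) handles the $\epsilon$-projection subtlety in identifying $\kappa_\Phi^\epsilon$ with the $\epsilon$-component of $ES_\n(\Phi)$ integrally. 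This produces an integral lift in $\mathcal{M}(K_0(\n), S'; \mathcal{O}_\pi)^\epsilon$, which transports via flat base change (Proposition \ref{FlachundNoethersch}(i)) to the desired lift in $\mathcal{M}(K_0(\n), S'; \mathcal{L})^\epsilon$. The main technical obstacle is the Shapiro-type identification of the middle and quotient terms with $\mathcal{A}(K_0(\n), T; N)$ and $\mathcal{A}(K_0(\n/\p), T; N)$ at the level of $G(F)$-cohomology: this requires careful tracking of the $G(F_\p)$-module structure and adelic invariance as the $\PP(F_\p)$-variable is absorbed, together with the iterative bookkeeping needed to unravel all $|S'|$ Steinberg layers.
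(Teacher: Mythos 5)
Your overall strategy — induction on $|S'|$, peeling off Steinberg primes one at a time via a short exact sequence of $G(F)$-modules, identifying the obstruction to lifting as a class in the cohomology of a lower-level space, showing the obstruction dies rationally by the absence of a $\pi$-eigensystem at level $\n'$, and using Proposition~\ref{FlachundNoethersch}(i) to conclude it is $\mathcal{O}_\pi$-torsion and then transport integral lifts via flat base change — is exactly the structure of the proof (it is the integral adaptation of [Sp], Prop.~4.8(ii), which the paper simply cites). However, the specific short exact sequence you build the inductive step on is the wrong one, and this is a genuine gap.

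You start from the defining sequence $0 \to \Z \to C^0(\PP(F_\p),\Z) \to \St_\p \to 0$. After applying $\Hom_\Z(-,M)$ (with $M=\Hom(\St_T\otimes\Div_0,N)$) and globalizing, the middle term becomes $C\bigl(Z,\Hom_\Z(C^0(\PP(F_\p),\Z),M)\bigr)$ and the quotient becomes $C(Z,M)$, where $Z=G(\A^{T\cup\{\p\},\infty})/K_0(\n)^{T\cup\{\p\}}$. Neither of these is the $G(F)$-module $\Ah(K_0(\n),T;N)=C(G(F_\p)/K_\p(\p)\times Z,M)$ or $\Ah(K_0(\n/\p),T;N)=C(G(F_\p)/G(\OO_\p)\times Z,M)$, and your ``Shapiro-type identification'' does not produce these isomorphisms on $G(F)$-cohomology: $C^0(\PP(F_\p),\Z)$ is compactly induced from $B(F_\p)$, not from $K_\p(\p)$ or $G(\OO_\p)$, and there is no mechanism to trade a $B(F_\p)$-induction for an Iwahori- or hyperspecial-induction. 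In fact no Shapiro step is needed at all: the sequence that makes the induction go is the Bruhat--Tits tree sequence
\begin{align*}
0 \too C_c^0(G(F_\p)/G(\OO_\p),\Z) \too C_c^0(G(F_\p)/K_\p(\p),\Z) \too \St_\p \too 0,
\end{align*}
(this is the content of [Sp], Lemma~4.7). Since $\St_\p$ is $\Z$-free, applying $\Hom_\Z(-,M)$ yields $0 \to \Hom(\St_\p,M) \to C(G(F_\p)/K_\p(\p),M) \to C(G(F_\p)/G(\OO_\p),M) \to 0$ because $\Hom$ of finitely supported functions on a discrete set \emph{is} the full function space; globalizing then gives precisely $0\to \Ah(K_0(\n),T\cup\{\p\};N)\to\Ah(K_0(\n),T;N)\to\Ah(K_0(\n/\p),T;N)\to 0$ as $G(F)$-modules, and the rest of your argument goes through. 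Replace your exact sequence by this one and the proof is in order; as written, the inductive step is unsupported.
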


\begin{proof}
Let $\kappa$ be a lift of $ES_\n(\Phi)$ to $M(\n;\mathcal{L})^{\epsilon}$.
Since $\pi$ is new at $\q$ for all $\q\in S$ we know that the image of $\kappa$ under the trace maps
$$\Tr_\q\colon  M(\n;\mathcal{L})^{\epsilon}\too M(\n\q^{-1};\mathcal{L})^{\epsilon}$$
is torsion for all $\q\in S$.
Thus, every multiple $t\cdot\kappa$ with $t\in \mathfrak{a}_{S^{\prime}}^{\tor}$ lies in the kernel of all trace maps $\Tr_\q$ for $\q\in S^{\prime}$ and is also an eigenvector under the Atkin-Lehner operator $W_\q$ with eigenvalue $-1$.

We are going to prove the claim by induction on $\left|S^{\prime}\right|$.
The well-known short exact sequence
$$0\too \cind_{K_\q(\q)}^{G(F_\q)}\Z\too \left(\cind_{K_\q}^{G(F_\q)}\Z\right)^{W_\q=-1}\too \St_\q\too 0$$
(see for example Section 2.4 of \cite{Sp})
induces an exact sequence
$$M(\n,\{\q\};\mathcal{L})^{\epsilon}\xrightarrow{\Ev_{\{\q\}}} \left(M(\n;\mathcal{L})^{\epsilon}\right)^{W_\q=-1}\xrightarrow{\Tr_\q} M(\n\q^{-1};\mathcal{L})^{\epsilon}$$ in cohomology.
Hence, we can lift the class $t\cdot\kappa$ if $S^{\prime}=\{\q\}$.

Now, let $\left|S^{\prime}\right|\geq 2$.
We pick an element $\q\in S^{\prime}$ and consider the following commutative diagram with exact rows:
\begin{center}
 \begin{tikzpicture}
    \path 	(-0.5,0) 	node[name=A]{$M(\n,S^{\prime};\mathcal{L})^{\epsilon}$}
		(4,0) 	node[name=B]{$\left(M(\n,S^{\prime}-\{\q\};\mathcal{L})^{\epsilon}\right)^{W_\q=-1}$}
		(9,0) 	node[name=C]{$M(\n\q^{-1},S^{\prime}-\{\q\};\mathcal{L})^{\epsilon}$}
		(-0.5,-2) 	node[name=D]{$M(\n,\{\q\};\mathcal{L})^{\epsilon}$}
		(4,-2) 	node[name=E]{$\left(M(\n;\mathcal{L})^{\epsilon}\right)^{W_\q=-1}$}
		(9,-2)  node[name=F]{$M(\n\q^{-1};\mathcal{L})^{\epsilon}$};
    \draw[->] (B) -- (C) node[midway, above]{$\Tr_\q$};
    \draw[->] (E) -- (F) node[midway, above]{$\Tr_\q$};
		\draw[->] (A) -- (B) node[midway, above]{$\Ev_{\{\q\}}$};
		\draw[->] (D) -- (E) node[midway, above]{$\Ev_{\{\q\}}$};
		\draw[->] (A) -- (D) node[midway, right]{$\Ev_{S^{\prime}-\{\q\}}$};
		\draw[->] (B) -- (E) node[midway, right]{$\Ev_{S^{\prime}-\{\q\}}$};
		\draw[->] (C) -- (F) node[midway, right]{$\Ev_{S^{\prime}-\{\q\}}$};
 \end{tikzpicture} 
\end{center}
By the induction hypothesis we can lift $r\cdot \kappa$ to a class $\widetilde{\kappa}\in \left(M(\n,S^{\prime}-\{\q\};\mathcal{L})^{\epsilon}\right)^{W_\q=-1}$ for every $r\in \mathfrak{a}_{S^{\prime}-\{\q\}}$.
If $F=\Q$, the map
$$\Ev_{S^{\prime}-\{\q\}}\colon M(\n\q^{-1},S^{\prime}-\{\q\};\mathcal{L})^{\epsilon}\too M(\n\q^{-1};\mathcal{L})^{\epsilon}$$
is injective.
Therefore, the claim follows from $\Tr_\q(r\cdot\kappa)=0$.

In all other cases, we consider the canonical map
\begin{align*}
\iota\colon M(\n\q^{-1},S^{\prime}-\{\q\};\mathcal{L})^{\epsilon} \too M(\n,S^{\prime}-\{\q\};\mathcal{L})^{\epsilon}.
\end{align*}
Since $\Tr_\q\circ\iota$ is equal to multiplication by $N(\q)+1$, we see that
$(N(\q)+1)\cdot\widetilde{\kappa}-\iota(\Tr_\q(\widetilde{\kappa}))$ can be lifted to a class in $M(\n,S^{\prime};\mathcal{L})^{\epsilon}$. 
The commutativity of the above diagram together with the fact that $\Tr_\q(r\cdot \kappa)$ vanishes implies that $(N(\q)+1)\cdot\widetilde{\kappa}-\iota(\Tr_\q(\widetilde{\kappa}))$ is also a lift of $(N(\q)+1)r\cdot\kappa$.
\end{proof}

As in Section \ref{Stick} let $L/F$ be a finite abelian extension with Galois group $\mathcal{G}$.
Suppose that the ramification of $L/F$ is bounded by the non-zero ideal $\mathfrak{m}\subset \mathcal{O}$.
We let $S_{\m}$ be the subset of primes in $S$ which divide $\m$.
If $\q \in S$, we define $I_{\q} \subseteq \mathcal{R}[\mathcal{G}]$ as the kernel of the projection $\mathcal{R}[\mathcal{G}]\onto \mathcal{R}[\mathcal{G}/\mathcal{G}_{\q}]$.
If $v\in S_{\infty}$, we let $\sigma_{v}$ be a generator of $\mathcal{G}_{v}$ and define $I_{v}^{\pm 1}\subseteq \mathcal{R}[\mathcal{G}]$ as the ideal generated by $\sigma_{v}\mp 1$.
\begin{Theorem}\label{MainTheorem}
Let $\Phi\in S_{2}(G,\n;\mathcal{L})_{\pi}$. Then $$r\cdot \Theta_{\mathfrak{m}}(L/F,\Phi)^{\epsilon}\in  \left(\prod_{v\in S_{\infty}} I_{v}^{-\epsilon_{v}(-1)} \cdot \prod_{\q \in S_\mathfrak{m}} I_{\q}\right) \otimes \mathcal{L},$$ for all $r\in \mathfrak{a}_{S_{m}}$.
In particular, if $L/F$ is totally real, $\epsilon$ is the trivial character and $\mathcal{R}\subset \C$ is a ring containing $\mathcal{L}$, we get
\begin{align*}
 2^{-d}\cdot\Theta_{\mathfrak{m}}(L/F,\Phi)^{\epsilon}&\in \mathcal{R}[\mathcal{G}] \intertext{and} \ord_{\mathcal{R}}(2^{-d}r\cdot\Theta_{\mathfrak{m}}(L/F,\Phi)^{\epsilon})&\geq |S_{\m}|.
\end{align*}
\end{Theorem}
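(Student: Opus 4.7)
The strategy is to combine Proposition \ref{vanish2} with the lifting result in Lemma \ref{torsion}. Proposition \ref{vanish2} delivers exactly the required ideal containment for a Stickelberger element, but only under the hypothesis that an $S$-special modular symbol is available. By the definition of $S_2(G,\n;\mathcal{L})_\pi$, the class $\kappa_\Phi^\epsilon$ only lifts integrally to $\mathcal{M}(K_0(\n);\mathcal{L})^\epsilon$, not a priori to $\mathcal{M}(K_0(\n),S_\m;\mathcal{L})^\epsilon$. The missing ingredient is precisely supplied by Lemma \ref{torsion}: multiplication by a factor $c_{S_\m}^{\tor}\in\mathfrak{a}_{S_\m}^{\tor}$ kills the relevant torsion in the Hilbert modular cohomology and produces the required $S_\m$-special lift.

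Concretely, choose $\kappa'\in\mathcal{M}(K_0(\n),S_\m;\mathcal{L})^\epsilon$ furnished by Lemma \ref{torsion}, and set $\tilde\kappa=\Ev_{S_\m}(\kappa')\in\mathcal{M}(K_0(\n);\mathcal{L})^\epsilon$, so that the image of $\tilde\kappa$ under $\mathcal{M}(K_0(\n);\mathcal{L})^\epsilon\to\mathcal{M}(K_0(\n);\C)^\epsilon$ equals $c_{S_\m}^{\tor}\,\kappa_\Phi^\epsilon$. Since $\mathcal{L}$ is free over $\mathcal{O}_\pi$ and hence $\Z$-flat, Proposition \ref{vanish2} applies to $\tilde\kappa$ with $N=\mathcal{L}$ and $S=S_\m$, yielding
$$
\Theta_\m(L/F,\tilde\kappa)\in\left(\prod_{v\in S_\infty}I_v^{-\epsilon_v(-1)}\cdot\prod_{\p\in S_\m}I_\p\right)\otimes\mathcal{L}.
$$
The cap-product definition of Stickelberger elements is functorial in the coefficient module, so the image of $\Theta_\m(L/F,\tilde\kappa)$ in $\C[\mathcal{G}]$ equals $c_{S_\m}^{\tor}\cdot\Theta_\m(L/F,\Phi)^\epsilon$; since $\mathcal{L}\subseteq\C$ is torsion-free, the map $\Z[\mathcal{G}]\otimes\mathcal{L}\to\C[\mathcal{G}]$ is injective, and the main containment follows.

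For the \emph{in particular} clause, note that when $L/F$ is totally real every Archimedean place of $F$ splits completely, so every decomposition group $\mathcal{G}_v$ for $v\in S_\infty$ is trivial; hence $\sigma_v=1$, and for the trivial character $\epsilon$ we get $I_v^{-\epsilon_v(-1)}=I_v^{-1}=(\sigma_v+1)=(2)$, so that $\prod_{v\in S_\infty}I_v^{-\epsilon_v(-1)}=(2^d)$. The first assertion $2^{-d}\Theta_\m(L/F,\Phi)^\epsilon\in\mathcal{R}[\mathcal{G}]$ needs no torsion correction: since $\kappa_\Phi^\epsilon$ itself already lifts integrally to $\mathcal{M}(K_0(\n);\mathcal{L})^\epsilon$, Proposition \ref{vanish2} applied with $S=\emptyset$ gives $\Theta_\m(L/F,\Phi)^\epsilon\in 2^d\Z[\mathcal{G}]\otimes\mathcal{L}\subseteq 2^d\mathcal{R}[\mathcal{G}]$. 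The order-of-vanishing bound then follows from the main containment combined with the obvious inclusion $\prod_{\p\in S_\m}I_\p\subseteq I_\mathcal{R}(\mathcal{G})^{|S_\m|}$.

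The main point of care, rather than a genuine obstacle, is keeping the coefficients straight: one must verify that the $S_\m$-special lift produced by Lemma \ref{torsion} can be taken with values in $\mathcal{L}$ (rather than merely $\C$), and track that the torsion factor $c_{S_\m}^{\tor}$ absorbs both the $2$-torsion of $\mathcal{M}(K_0(\n);\mathcal{O}_\pi)^\epsilon$ and the torsion occurring in the intermediate cohomology groups that appear in the inductive construction of special lifts. Both of these are built into the definition of $\mathfrak{a}_{S_\m}^{\tor}$ and the flatness statement of Proposition \ref{FlachundNoethersch}, so once the setup is in place, the proof is essentially a two-line invocation of the established machinery.
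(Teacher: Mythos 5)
Your proof is correct and takes the same route as the paper, which disposes of the theorem in one line by citing Lemma \ref{torsion} and Proposition \ref{vanish2}. Your elaboration fills in precisely the right details: using Lemma \ref{torsion} to obtain an $S_\m$-special lift of $c_{S_\m}^{\tor}\,\kappa_\Phi^\epsilon$ with coefficients in $\mathcal{L}$, invoking $\Z$-flatness of $\mathcal{L}$ so that Proposition \ref{vanish2} applies, and correctly observing (via the $S=\emptyset$ case) that the integrality statement $2^{-d}\Theta_\m(L/F,\Phi)^\epsilon\in\mathcal{R}[\mathcal{G}]$ does not require the torsion factor.
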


\begin{proof}
This is a direct consequence of Lemma \ref{torsion} and Proposition \ref{vanish2}.
\end{proof}

\begin{Remark} \renewcommand{\labelenumi}{(\roman{enumi})}
\begin{enumerate}
\item Since newforms are also eigenvectors for all Atkin-Lehner operators, we can (after inverting 2) use Corollary \ref{parity} to calculate the parity of the order of vanishing of the above Stickelberger elements in terms of local root numbers.
Hence, if $(-1)^{S_{\m}}$ and the sign in the functional equation differ, we can deduce that the order of vanishing is at least $|S_{\m}|+1$.
\item If $\Phi$ is a $p$-ordinary newform we can apply Remark \ref{padic} (ii) to the ordinary $p$-stabilization of $\Phi$ to construct its $p$-adic $L$-function $L_{p}(s,\Phi)$.
Using a version of Theorem \ref{MainTheorem} for the $p$-stabilization of $\Phi$ and observing that the ideal $\mathfrak{a}^{\tor}_{S_p}$ is independent of the modulus, we can deduce that the order of vanishing  of $L_{p}(s,\Phi)$ at $s=0$ is at least the number of primes lying above $p$ at which the automorphic representation associated to $\Phi$ is Steinberg.
This was first proven by Spie\ss~ in \cite{Sp}.
\end{enumerate}
\end{Remark}

Finally, we want to spend a few words on how the theorem of the introduction can be obtained as a special case.
Let $M>2$ be a natural number and $A$ an elliptic curve over $\Q$ with corresponding normalized newform $f\in S_{2}(\Gamma_{0}(N))$.
Let $+\colon \R^{\ast}\to \left\{\pm 1\right\}$ be the trivial character.
Comparing Proposition \ref{compatibility} and Proposition \ref{specialvalues} with the corresponding statements for the Stickelberger elements of Mazur and Tate shows that there exists a constant $c\in \C^{\ast}$ such that
\begin{align*}
 (\Theta^{\MT}_{A,M})^{\vee}=c\cdot \Theta_{M}\left(\Q(\mu_{M})^{+}/\Q, \frac{2 \pi i}{\Omega_A^+}\ f \right)^{+}.
\end{align*}
A direct calculation shows that $c=2$.
The modular symbol $ES_{\pi}^{\epsilon}\left(\frac{2 \pi i}{\Omega_A^+} f\right)$ of $f$ is contained in $M(N,\mathcal{L})^{\epsilon}$ if and only if $[q]_{A}\in \mathcal{L}$ for all $q\in \Q/\Z$.
Thus, our main theorem follows from Remark \ref{allmostfinal} and Theorem \ref{MainTheorem}.

\bibliographystyle{abbrv}
\bibliography{bibfile}
\end{document}